\documentclass{article}%

\usepackage{graphicx}
\usepackage{amsmath,amssymb,amsfonts}%
\usepackage{theorem}%
\usepackage{color}%
\usepackage{hyperref}
\usepackage[font={small, it}]{caption}
\setlength{\textwidth}{17cm}
\setlength{\textheight}{9in}
\setlength{\oddsidemargin}{0in}
\setlength{\topmargin}{-1.5cm}

\setlength{\parindent}{0ex}%
\setlength{\parskip}{1ex}%

\theoremstyle{change}%
\sloppy%

\newtheorem{definition}{Definition:}[section]%
\newtheorem{proposition}[definition]{Proposition:}%
\newtheorem{theorem}[definition]{Theorem:}%
\newtheorem{lemma}[definition]{Lemma:}%
{\theorembodyfont{\rmfamily}\newtheorem{remark}[definition]{Remark:}}%
{\theorembodyfont{\rmfamily}}%

\newenvironment{proof}{{\bf Proof:}}
{\qquad \hspace*{\fill} $\Box$}%

\newcommand{\fg}{\mathfrak{g}}%
\newcommand{\fn}{\mathfrak{n}}%
\newcommand{\tr}{\operatorname{tr}}%

\newcommand{\inner}{\operatorname{int}}%
\newcommand{\rme}{\mathrm{e}}%

\newcommand{\CC}{\mathcal{C}}%
\newcommand{\OC}{\mathcal{O}}%
\newcommand{\UC}{\mathcal{U}}%
\newcommand{\XC}{\mathcal{X}}%
\newcommand{\DC}{\mathcal{D}}%
\newcommand{\C}{\mathbb{C}}%
\newcommand{\N}{\mathbb{N}}%
\newcommand{\R}{\mathbb{R}}%
\newcommand{\Z}{\mathbb{Z}}%

\newcommand{\g}{\mathfrak{g}}%
\usepackage[pagewise]{lineno}

\begin{document}

\title{Control sets of one-input linear control systems on solvable, nonnilpotent 3D Lie groups}
 \author{Adriano Da Silva\thanks{Supported by Proyecto UTA Mayor Nº 4768-23} \\
		Departamento de Matem\'atica,\\Universidad de Tarapac\'a - Sede Iquique, Chile.
 		\and
 		Lino Grama and Alejandro Otero Robles\thanks{Supported by Coordena\c{c}\~ao de Aperfei\c{c}oamento de Pessoal de Nível Superior - Brasil (CAPES) - Finance Code 001} \\
 		Instituto de Matem\'{a}tica\\
		Universidade Estadual de Campinas, Brazil\\
 }
 \date{\today }

 \maketitle

 \begin{abstract}
  In this article, we completely describe the control sets of one-input linear control systems on solvable, nonnilpotent 3D Lie groups. We show that, if the restriction of the associate derivation to the nilradical is nontrivial, the Lie algebra rank condition is enough to assure the existence of a control set with a nonempty interior. Moreover, such a control set is unique and, up to conjugations, given as a cylinder of the state space. On the other hand, if such a restriction is trivial, one can obtain an infinite number of control sets with empty interiors or even controllability, depending on the group considered.
 \end{abstract}

 {\small {\bf Keywords:} Controllability, control sets, solvable Lie groups} 
	
 {\small {\bf Mathematics Subject Classification (2020): 93B05, 93C05, 83C40.}}%

\section{Introduction}

Control systems have been studied for a long time; in particular, linear control systems over $\mathbb{R}^n$ have many physical applications (see for instance \cite{Jurd, Lg, PBGM, Sk}). Roughly speaking, a control system is a system characterized by a base space (differentiable manifold) and a dynamics characterized by a family of differential equations parameterized by functions known as controls.

The first generalization of a linear control system was carried out by L. Markus in \cite{Mar} to matrices groups. Subsequently, V. Ayala and J. Tirao in \cite{AyTi} introduced the concept of linear control systems over arbitrary Lie groups. One of the main reasons to study control systems over Lie groups was provided by P. Jouan in \cite{Jouan2}, where it is shown that every control-affine system with complete vector fields that generate a finite-dimensional Lie algebra is equivalent to a linear control system over a Lie group or a homogeneous space.

In order to understand the dynamics of a control system, understanding the control sets is of great help. Control sets are the maximal regions in the base space where approximate controllability holds; they contain fixed and recurrent points and periodic and bounded orbits. Moreover, controllability holds in their interior, i.e., the possibility to steer two given points to each other through a solution of the system in positive time. For linear control system
on Lie groups, the topological properties of the control sets are connected with the eigenvalues of a derivation associated with the drift of the system. Precisely, if the group is solvable and the linear control system admits a control set containing the identity element of the group in its interior, then such a control set is unique; it is bounded if and only if the central subgroup (see Section 2.2) is compact and is open (resp. closed) if the eigenvalues of the associated derivation have only positive (resp. negative) real parts (see \cite{DSAy2, ADZ, DS}). 

Though nice, the previous results  depend on the assumption that the identity element is in the interior of the control set. However, as a consequence of \cite[Theorem 4.1]{DSAyAOR}, a linear control system can admit a control set with a nonempty interior, satisfying all the previous properties, and having the identity element in its boundary is possible. Therefore, it is natural to ask if the previous statement holds in general. In this work, we give a first step in this direction by fully characterizing the control sets of one-input linear control systems on 3D solvable, nonnilpotent Lie groups satisfying the Lie algebra rank condition. The fact that our systems have a bounded control range shows us that the dynamics are more influenced by the group structure, differing in many aspects from the unbounded case treated in \cite{DSAy1}. Basically, if the restriction of the associated derivation to the nilradical of the group is nontrivial, the existence of a unique control set with a nonempty interior is assured, and, up to conjugations, it coincides with a cylinder. Moreover, several of its topological properties are related to the eigenvalues of this restriction. On the other hand, if the restriction is trivial, very distinct scenarios can appear for different classes of groups; in one class, controllability holds under the Lie algebra rank condition, while on the other, an infinite number of control sets with empty interiors appear.

The paper is divided as follows: In Section 2, we establish the basis of our work. Here we define the concepts of control-affine systems, positive and negative orbits, control sets, controllability, and so on. We also define linear control systems on Lie groups and homogeneous spaces, together with their main properties. We also define here the concept of nilrank in a linear control system. In sequence, we specialize our discussion on 3D solvable, nonnilpotent Lie groups, where some preliminary results about conjugation on these groups are proved. In Section 3, we analyze a linear control system with nilrank two. We start by studying an associated control-affine system on $\R^2$, that is conjugated to the projection of our initial system to a specific homogeneous space. Afterwards, we state and prove our main result, showing that this class of systems admits a unique control set with a nonempty interior, given as the preimage of its counterpart on a homogeneous space. As a consequence, the topological properties of the projected system are reflected in the control set of the original system. In particular, one sees that such properties do not depend only on the eigenvalues of the associated derivation but also on the size of the control range and on the group structure. In Section 4, systems with nilrank equal to one and zero are considered. The nilrank one case is quite well behaved, in the sense that the Lie algebra rank condition is equivalent to the ad-rank condition. Such equality allows us to prove that the system also admits a unique control set with a nonempty interior and analyze its properties. The nilrank zero case is much wilder. Linear control systems on most of the classes of the groups in question do not admit control sets with nonempty interiors; instead, an infinite number of control sets with empty interiors exist. On the other hand, for one class, we do have the controllability of a linear control system. This huge difference in behavior is mainly due to the group structure, represented in this context by a $2\times 2$ matrix related to the group product. In Section 5, we consider non-simply connected Lie groups. Here there are two possible classes: the group of rigid motions and its $n$-fold covers, and the direct product of the one-dimensional torus by the 2D solvable Lie group. The results for the first class are quite straightforward since they are basically a reflection of what happens in the simply connected covering of these groups studied in the previous sections. For the second class, only nilranks equal to one and zero are possible. While for nilrank one the results are similar to the ones obtained for the simply connected groups, for the nilrank zero case, we have again a big difference between the group and its simply connected covering, since in the former we have controllability and on the latter the system admits an infinite number of control sets with empty interiors.

\section{Preliminaries}

In this section, we introduce the basic ideas about control-affine systems, control sets, linear control systems, and solvable nonnilpotent 3D Lie groups.

\subsection{Control-affine systems and control sets}

In this section, we will define the main objects that we will work with throughout this document, namely, control-affine systems and their control sets. We start with the formal definition of control-affine systems.

\begin{definition}
    A control-affine system on a connected finite-dimensional differentiable manifold $M$, is a family of ordinary differential equations

\begin{flalign*}
	  &&\dot{x}(s) = f_0(x(s)) + \sum_{i=1}^{m}u_i(s)f_i(x(s)),  \hspace{.5cm} {\bf u} = (u_1,\ldots, u_m)\in \mathcal{U}. &&\hspace{-1cm}\left(\Sigma_{M}\right)
	  \end{flalign*}

where $f_0,\ldots,f_m$  are smooth vector fields on $M$ and $\mathcal{U}$ is the set of all piecewise constant functions ${\bf u}$ satisfying ${\bf u}(s)\in \Omega$ a.e. Here, $\Omega\subset\mathbb{R}^m$ is a compact, convex subset with $0\in \inner\Omega$. The functions ${\bf u}$ are called control functions.
    
\end{definition}

For each $x\in M$ and ${\bf u}\in\UC$, the system $\Sigma_M$ admits a unique (locally) solution $s\mapsto \phi(s,x,u)$, in the sense of Caratheod\'ory, satisfying  $\phi(0,x,u) = 0$. We assume that the solutions are defined on the whole real line, since this is true for the systems we will be considering.

The set of points {\it reachable from $x$} and the set of points {\it controllable to $x$} in time $S>0$ are defined, respectively, as
$$\mathcal{O}^{+}_{S}(x):=\{y\in M|\;\;\text{there are,}\;\;{\bf u}\in \mathcal{U}\;\;\text{with}\;\; y=\phi(S,x, {\bf u})\},$$
$$\mathcal{O}^{-}_{S}(x):=\{y\in M|\;\;\text{there are,}\;\; {\bf u}\in \mathcal{U}\;\; \text{with}\;\; x=\phi(S,y, {\bf u})\}.$$
respectively. The {\it positive and negative orbits} of $x$ are
$$\mathcal{O}^{+}(x):=\bigcup_{S>0}\mathcal{O}^{+}_{S}(x)\;\;\;\text{and}\;\;\;\mathcal{O}^{-}(x):=\bigcup_{S>0}\mathcal{O}^{-}_{S}(x),$$
 respectively. We say that $\Sigma_{M}$ satisfy the \textit{Lie algebra rank condition} (LARC) if $\mathcal{L}(x)=T_x M$ for any $x\in M$, where 
        $\mathcal{L}$ is the smallest Lie algebra containing $f_0, f_1,\ldots, f_m$. In particular, if the LARC is satisfied, the sets 
         $$\mathcal{O}^{+}_{\leq S}(x):=\bigcup_{0<s\leq S}\mathcal{O}^{+}_{s}(x)\hspace{.5cm}\mbox{ and }\hspace{.5cm}\mathcal{O}^{-}_{\leq S}(x):=\bigcup_{0<s\leq S}\mathcal{O}^{-}_{s}(x), \hspace{.5cm}S>0,$$ have a nonempty interior.

In what follows, we formally define the concept of control sets.

\begin{definition}
\label{controlset}

A subset $D$ of $M$ is called a control set of the system $\Sigma_M$ if it satisfies the following properties:

\begin{itemize}
    \item [(i)] {\it (Weak invariance)} For every $x\in M$, there exists a ${\bf u}\in\mathcal{U}$ such that $\phi(\mathbb{R}^+,x,{\bf u})\subset D$;
    \item [(ii)] {\it (Approximate controllability)}  $ D\subset\overline{\mathcal{O}^{+}(x)}$ for every $x\in D$;
    \item[(iii)] {\it (Maximality)} $D$ is maximal with respect to properties (i) and (ii).
\end{itemize}
\end{definition}

In particular, when the whole state space $M$ is a control set, $\Sigma_M$ is said to be {\it controllable}. 

\bigskip

Control sets are pairwise disjoint and contain several dynamical properties of the system, such as fixed and recurrent points and periodic and bounded orbits. Moreover, exact controllability holds in its interior, i.e., points can be steered into one another by a solution of the system in positive time.  In fact, under the LARC, it holds that 
$$x\in\inner \OC^+(x)\hspace{.5cm}\iff\hspace{.5cm}x\in\inner \OC^+(x)\cap \inner \OC^-(x)\hspace{.5cm}\iff\hspace{.5cm}x\in \inner D,$$
where $D$ is a control set of $\Sigma_M$. In this case, $D=\overline{\OC^+(x)}\cap\OC^-(x)$ (See \cite[Section 3.2]{CK}).

Suppose $N$ is another smooth manifold, and 
\begin{flalign*}
	  &&\dot{z}(s) = g_0(z(s)) + \sum_{i=1}^{m}u_i(s)g_i(z(s)),  \hspace{.5cm} {\bf u} = (u_1,\ldots, u_m)\in \mathcal{U}. &&\hspace{-1cm}\left(\Sigma_{N}\right)
	  \end{flalign*}
is an affine control system over $N$. If $\psi:M\rightarrow N$ is a surjective smooth map, we say that $\Sigma_M$ and $\Sigma_N$ are $\psi$-conjugate if their respective vector fields are $\psi$-conjugate, i.e

$$\psi_*f_i=g_i\circ\psi,\;\;\;j=0,\ldots m.$$

If such $\psi$ exists, we say that $\Sigma_M$ and $\Sigma_N$ are $\psi$-conjugate. If $\psi$ is a diffeomorphism, we say that $\Sigma_M$ and $\Sigma_N$ are equivalent. The next result relates control sets of conjugated systems. Its proof is standard and will be omitted.

\begin{proposition}
\label{conjugation}
    Let $\Sigma_M$ and $\Sigma_N$ be $\psi$-conjugated systems. It holds:
\begin{enumerate}
    \item[1.] If $D$ is a control set of $\Sigma_M$, there exists a control set $E$ of $\Sigma_N$ such that $\psi(D)\subset E$;
    \item[2.] If for some $y_0\in \inner E$ it holds that $\psi^{-1}(y_0)\subset\inner D$, then $D=\psi^{-1}(E)$.
\end{enumerate}
    
\end{proposition}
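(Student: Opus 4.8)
The plan is to prove Proposition~\ref{conjugation} directly from the definitions, using the fact that $\psi$-conjugacy means solutions map to solutions: if $\phi^M$ and $\phi^N$ denote the respective flows, then $\psi(\phi^M(s,x,{\bf u})) = \phi^N(s,\psi(x),{\bf u})$ for every control ${\bf u}\in\UC$, because both sides solve the same ODE on $N$ with the same initial condition (uniqueness of Carath\'eodory solutions). Consequently $\psi(\OC^{+}(x)) \subset \OC^{+}(\psi(x))$ and, since $\psi$ is continuous and surjective, $\psi(\overline{\OC^{+}(x)}) \subset \overline{\OC^{+}(\psi(x))}$; similarly for negative orbits.

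For part 1, let $D$ be a control set of $\Sigma_M$. I would first check that $\psi(D)$ satisfies approximate controllability in $\Sigma_N$: for $y_1,y_2\in\psi(D)$ pick preimages $x_1,x_2\in D$; then $x_2\in\overline{\OC^{+}(x_1)}$ gives $y_2=\psi(x_2)\in\psi(\overline{\OC^{+}(x_1)})\subset\overline{\OC^{+}(y_1)}$. Hence $\psi(D)$ is a set on which approximate controllability holds, so it is contained in some control set $E$ of $\Sigma_N$ — here one invokes the standard fact that any nonempty set with the approximate-controllability property is contained in a (unique) maximal such set, which by Zorn's lemma together with weak invariance of the closure is a control set; alternatively one takes $E$ to be the control set through any point of $\psi(D)$. (If one wants $E$ itself to be weakly invariant, note $\psi(D)\subset\overline{\OC^+(y)}$ for all $y\in\psi(D)$ already forces all of $\psi(D)$ into one control set.) This yields $\psi(D)\subset E$.

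For part 2, suppose $y_0\in\inner E$ with $\psi^{-1}(y_0)\subset\inner D$. Using the characterization quoted in the excerpt, $E=\overline{\OC^{+}(y_0)}\cap\OC^{-}(y_0)$ and $D=\overline{\OC^{+}(x_0)}\cap\OC^{-}(x_0)$ for any $x_0\in\inner D$; pick $x_0\in\psi^{-1}(y_0)$. The inclusion $\psi^{-1}(E)\supset D$ follows from part 1. For the reverse inclusion $\psi^{-1}(E)\subset D$: take $x\in\psi^{-1}(E)$, so $\psi(x)\in E$. Then $\psi(x)\in\overline{\OC^{+}(y_0)}$ and $y_0\in\overline{\OC^{+}(\psi(x))}$. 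I would argue that $\psi(x)\in\overline{\OC^{+}(y_0)}$ together with $\psi$ being an open map near $y_0$ (or rather, using exact controllability: from $x_0$ one can reach a full neighborhood, and by conjugacy its $\psi$-image is a neighborhood of $y_0$ in $E$) lets one lift an approximate trajectory from $y_0$ to $\psi(x)$ to an approximate trajectory from $x_0$ to $x$, giving $x\in\overline{\OC^{+}(x_0)}$; symmetrically $x_0\in\overline{\OC^{+}(x)}$, hence $x$ lies in a control set of $\Sigma_M$ containing points of $\inner D$, so $x\in D$. The cleaner route: since $y_0\in\inner E$ and $\psi^{-1}(y_0)\subset\inner D$, exact controllability in $\inner E$ and in $\inner D$ match up through $\psi$, and any point of $\psi^{-1}(E)$ is approximately reachable from and to $\psi^{-1}(y_0)\subset\inner D$, forcing it into $D$ by maximality.

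The main obstacle is the lifting argument in part 2 — turning an approximate-controllability statement downstairs (a trajectory in $N$ from $y_0$ arbitrarily close to $\psi(x)$) into one upstairs (a trajectory in $M$ from $x_0$ arbitrarily close to $x$). The key is that the \emph{same} control functions ${\bf u}$ work in both systems, so a control steering $y_0$ near $\psi(x)$ steers $x_0$ to \emph{some} point in $\psi^{-1}(\text{near }\psi(x))$; one then needs that this point can be nudged to land near $x$ itself, which uses the LARC (interior of reachable sets nonempty) and the hypothesis $\psi^{-1}(y_0)\subset\inner D$ to re-enter $D$ and re-exploit exact controllability there. This is precisely why the proposition is stated as "standard" — all the nontrivial content is packaged in the characterization $D=\overline{\OC^{+}(x)}\cap\OC^{-}(x)$ and in openness of $\psi$ restricted to suitable neighborhoods, which in the applications will be a submersion.
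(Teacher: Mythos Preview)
The paper omits the proof entirely, declaring it ``standard'', so there is nothing to compare against; I can only assess your argument on its own merits.

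Part 1 is fine.

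Part 2 has a genuine gap, which you yourself flag but do not close. You need to show that every $y\in\psi^{-1}(E)$ lies in $\overline{\OC^{+}_M(x_0)}$ for some (equivalently, every) $x_0\in\psi^{-1}(y_0)\subset\inner D$, and you correctly isolate the obstacle: lifting the closure statement $\psi(y)\in\overline{\OC^{+}_N(y_0)}$ to $M$. Your proposed fix (``nudge the lifted endpoint to land near $y$ itself'') is not an argument, and the final paragraph just restates the difficulty. Here is what actually works, and it uses the hypothesis $\psi^{-1}(y_0)\subset\inner D$ in an essential way. Given an open $W\ni y$, the set $\psi(W)$ is open (this requires $\psi$ to be a submersion, as it is in every application in the paper) and hence meets $\OC^{+}_N(y_0)$; pick $z\in\psi(W)\cap\OC^{+}_N(y_0)$, say $z=\phi^N(s,y_0,{\bf u})$, and pick $w\in W$ with $\psi(w)=z$. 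By conjugacy the diffeomorphism $\phi^M(s,\cdot,{\bf u})$ of $M$ carries the fiber $\psi^{-1}(y_0)$ \emph{onto} the fiber $\psi^{-1}(z)$, so there exists $x_0'\in\psi^{-1}(y_0)$ with $\phi^M(s,x_0',{\bf u})=w$. Since $x_0'\in\psi^{-1}(y_0)\subset\inner D$, exact controllability in $\inner D$ gives $x_0'\in\OC^{+}_M(x_0)$, whence $w\in\OC^{+}_M(x_0)\cap W$. As $W$ was arbitrary, $y\in\overline{\OC^{+}_M(x_0)}$.

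The idea you were missing is that the \emph{entire fiber} $\psi^{-1}(y_0)$ sits in $\inner D$, so you are free to choose which preimage of $y_0$ to launch from; the correct choice is dictated by the target $w$, not fixed in advance. Once this is in place, approximate controllability holds on all of $\psi^{-1}(E)$, weak invariance is immediate from conjugacy, and maximality of $D$ forces $D=\psi^{-1}(E)$.
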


 \begin{remark}
     \label{function}

 Before finishing the section, let us make a simple remark that will help us ahead.  Let $f:M\rightarrow\R$ be a continuous function and $\Sigma_G$ a control-affine system on $M$. If $x, y\in M$ are such that,
     $$f(x)<f(\phi(s, y, {\bf u})), \hspace{.5cm}\forall s\geq 0, {\bf u}\in\UC \hspace{.5cm}\mbox{ then } \hspace{.5cm}x\notin\overline{\OC^+(y)}.$$
     In particular, it cannot exist a control set of $\Sigma_M$ that contains both $x$ and $y$, since the condition (ii) of Definition \ref{controlset} cannot be satisfied.

 \end{remark}

\subsection{Linear vector fields and linear control systems on Lie groups}

In this section, we will define linear control systems on Lie groups. We explore the properties satisfied by these systems over Lie groups, with special attention over solvable, nonnilpotent 3D Lie groups. We begin with the definition of a linear vector field over a Lie group.

\begin{definition}
Let $G$ be a connected Lie group with Lie algebra $\mathfrak{g}$ identified with the set of left-invariant vector fields.  A vector field $\mathcal{X}$ on $G$ is called linear if its associated flow $\{\varphi_s\}_{s\in\R}$,  is a one-parameter subgroup of automorphisms of $G$, that is,
        $$\forall s\in\R, \hspace{1cm}\varphi_s(gh)=\varphi_s(g)\varphi_s(h).$$
       
\end{definition}

The vector field $\mathcal{X}$ is complete and is uniquely associated with a derivation $\DC$ of $\g$ defined by

$$\mathcal{D}Y=-[\mathcal{X},Y](e), \;\;\;\forall Y \in \g.$$

Moreover, since  $\varphi_s\in\mathrm{Aut}(G)$ implies that $(d\varphi_s)_{e}\in\mathrm{Aut}(\fg)$ for all $s\in \R$, it holds that

\begin{equation}
    \label{formula}
    (d\varphi_s)_e=\mathrm{e}^{s\mathcal{D}},\;\;\;\forall s \in \mathbb{R}.
\end{equation}

This nice relationship between the flow of $\XC$ and the derivation $\DC$ has several important applications, mainly due to the subgroups and subalgebras associated with them, as we define next.

Let us consider an eigenvalue $\alpha\in\C$ of the derivation $\DC$. The real generalized eigenspaces of $\DC$ are the subspaces of $\fg$ defined as 
$$
\mathfrak{g}_{\alpha}=\{X\in \mathfrak{g}:(\mathcal{D}-\alpha I)^{n}X=0\;\;\mbox{for some }n\geq 1\}, \;\;\mbox{ if }\;\;\alpha\in\R \;\;\mbox{ and}
$$
$$\mathfrak{g}_{\alpha}=\mathrm{span}\{\mathrm{Re}(v), \mathrm{Im}(v);\;\;v\in \bar{\fg}_{\alpha}\},\;\;\mbox{ if }\;\;\alpha\in\C,$$
where $\bar{\fg}=\fg+i\fg$ stands for the complexification of $\fg$, and $\bar{\fg}_{\alpha}$ is the generalized eigenspace of the extension $\bar{\DC}=\DC+i\DC$ of $\DC$ to $\bar{\fg}$.  By Proposition 3.1 of \cite{SM1}, it holds that
$$[\bar{\fg}_{\alpha },\bar{\fg}_{\beta }]\subset 
\bar{\fg}_{\alpha +\beta },$$
where $\bar{\fg}_{\alpha +\beta }=\{0\}$ if $\alpha +\beta $ is not an eigenvalue of $\mathcal{D}$. Therefore, if we put
$$\fg_{\lambda}:=\bigoplus_{\alpha; \mathrm{Re}(\alpha)=\lambda}\fg_{\alpha},$$
the previous imply that 
$$[\fg_{\lambda_1}, \fg_{\lambda_2}]\subset \fg_{\lambda_1+\lambda_2}\;\;\;\mbox{ when }\lambda_1+\lambda_2=\mathrm{Re}(\alpha)\;\mbox{ for some eigenvalue }\alpha\;\mbox{ of }\;\DC\;\mbox{ and zero otherwise},$$
where $\fg_{\lambda}=\{0\}$ if $\lambda\in\R$ is not the real part of any eigenvalue of $\DC$. 

The {\it unstable, central,} and {\it stable} subalgebras of $\fg$ are given, respectively, by
\begin{equation*}
\mathfrak{g}^{+}=\bigoplus_{\alpha :\, \mathrm{Re}(\alpha)>	0}\mathfrak{g}_{\alpha },\hspace{1cm}\mathfrak{g}^{0}=\bigoplus_{\alpha :\,%
	\mathrm{Re}(\alpha )=0}\mathfrak{g}_{\alpha }\hspace{1cm}%
\mbox{ and }\hspace{1cm}\mathfrak{g}^{-}=\bigoplus_{\alpha :\, \mathrm{Re}%
	(\alpha )<0}\mathfrak{g}_{\alpha }.
\end{equation*}

Since the previous subalgebras are given as sum of all the (generalized) eigenspaces of $\DC$, it holds that  $\mathfrak{g}=\mathfrak{g}^{+}\oplus \mathfrak{g}^{0}\oplus \mathfrak{g}^{-}$. Moreover, these subalgebras are invariant by the derivation $\DC$, with $\mathfrak{g}^{+}$ and $\mathfrak{g}^{-}$ nilpotent subalgebras.

The relationship between the previous subalgebras and the dynamics of the flow of $\XC$ is obtained through their subgroups. Let us denote by $G^{+}$, $G^{-}$, $G^{0}$, $G^{+,0},$ and $G^{-,0}$, the connected Lie subgroups whose associated Lie algebras are given, respectively, by $\mathfrak{g}^{+}$, $%
\mathfrak{g}^{-}$, $\mathfrak{g}^{0}$, $\mathfrak{g}^{+,0}:=\mathfrak{g}%
^{+}\oplus \mathfrak{g}^{0}$ and $\mathfrak{g}^{-,0}:=\mathfrak{g}^{-}\oplus 
\mathfrak{g}^{0}$. By Proposition 2.9 of \cite{DS}, all of the previous subgroups are invariant by the flow of $\XC$, closed, and their intersections are trivial, that is, 
$$G^+\cap G^-=G^+\cap G^{-, 0}=\ldots=\{e\}.$$
Moreover, $G^+$ and $G^-$ are connected, simply connected, nilpotent Lie groups. Analogously, at the algebra level, the subgroups $G^+, G^0$, and $G^-$ are called, respectively, the {\it unstable, central,} and {\it stable subgroups} of $\XC$.

Next, we define linear control systems. A {\it linear control system (LCS for short)} on $G$ is determined by the family of ODEs
\begin{equation}\label{scl}
\tag{$\Sigma_G$}
    \dot{g}(s) = \mathcal{X}(g(s)) + \sum_{i=1}^{m}u_i(s)Y^{i}(g(s)),
\end{equation}

where $\mathcal{X}$ is a linear vector field, $Y^{i}$ are left-invariant vector fields, and ${\bf u} = (u_1,\ldots, u_m)\in \mathcal{U}$ are control functions as defined previously.

The solutions of $\Sigma_G$ satisfy 
$$\phi(t, gh, {\bf u})=\varphi_t(g)\phi(t, h, {\bf u}), \hspace{1cm}\forall g, h\in G, t\in\R, {\bf u}\in\UC.$$
A consequence of the previous formula is the following proposition (see \cite[Proposition 2]{Jouan})

\begin{proposition}
\label{properties}
    For a LCS, it holds that
    \begin{enumerate}
    \item $\OC^{+}_{S_1+S_2}(e)=\varphi_{S_2}(\OC^+_{S_1}(e))\OC_{S_2}^+(e)$, for all $S_1, S_2>0$;
    \item $\OC^{+}_{S_1}(e)\subset \OC^{+}_{S_2}(e)$, for all $0<S_1<S_2$;
    \item $\OC^{+}_{\leq S}(e)=\OC^{+}_{S}(e),$ for all $S>0$.
\end{enumerate}
\end{proposition}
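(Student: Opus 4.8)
The plan is to derive all three statements from the fundamental cocycle-type identity for solutions of a linear control system, namely
$$\phi(t, gh, {\bf u}) = \varphi_t(g)\,\phi(t, h, {\bf u}), \qquad \forall g, h\in G,\ t\in\R,\ {\bf u}\in\UC,$$
applied with $h=e$ and using that $\varphi_t$ is an automorphism (so $\varphi_t(e)=e$). I would first record the base case $\phi(t,e,{\bf u})$ is the reachable point from $e$ at time $t$, and observe that by concatenation of controls, $\phi(S_1+S_2, e, {\bf u}) = \phi(S_2, \phi(S_1,e,{\bf u}), {\bf u}')$ where ${\bf u}'$ is the shifted control; this is just the semigroup (flow) property of solutions under concatenation of piecewise-constant controls, together with the fact that $\UC$ is closed under concatenation and time-shift.

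For item 1, I would take $y\in \OC^+_{S_1+S_2}(e)$, write $y=\phi(S_1+S_2,e,{\bf u})$, split the control at time $S_1$ as ${\bf u} = {\bf u}_1 * {\bf u}_2$, set $g=\phi(S_1,e,{\bf u}_1)\in\OC^+_{S_1}(e)$, and compute
$$y = \phi(S_2, g, {\bf u}_2) = \varphi_{S_2}(g)\,\phi(S_2, e, {\bf u}_2) \in \varphi_{S_2}\big(\OC^+_{S_1}(e)\big)\,\OC^+_{S_2}(e),$$
using the cocycle identity with $h=e$. The reverse inclusion is the same computation read backwards: given $g\in\OC^+_{S_1}(e)$ and $z\in\OC^+_{S_2}(e)$, pick controls realizing each, concatenate, and the identity shows $\varphi_{S_2}(g)z = \phi(S_1+S_2,e,{\bf u}_1*{\bf u}_2)$. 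So item 1 is a clean double inclusion.

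Item 2 follows from item 1: since $0\in\inner\Omega$, the identity element $e$ lies in $\OC^+_{S_2}(e)$ is not quite automatic, but one has $e = \phi(S_2, e, {\bf 0}_{\text{shifted for time }S_2})$ only if $\XC(e)=0$, which indeed holds since $e$ is a fixed point of every $\varphi_t$ (equivalently $\XC(e)=0$ because $\DC$ is a derivation and the flow fixes $e$). Hence with the zero control, $\phi(S_2,e,{\bf 0})=e\in\OC^+_{S_2}(e)$, and then item 1 gives $\OC^+_{S_1}(e) = \varphi_{S_2}(\OC^+_{S_1}(e))\cdot e \subset \varphi_{S_2}(\OC^+_{S_1}(e))\,\OC^+_{S_2}(e) = \OC^+_{S_1+S_2}(e)$ — wait, this needs $e\in\varphi_{S_2}(\OC^+_{S_1}(e))$ too; more directly, given $y\in\OC^+_{S_1}(e)$ with control ${\bf u}$, prepend a zero-control segment of length $S_2-S_1$, so $y=\phi(S_1,e,{\bf u}) = \phi(S_2, \phi(S_2-S_1, e, {\bf 0}),{\bf u}') = \phi(S_2, e, {\bf 0}*{\bf u}')$, proving $y\in\OC^+_{S_2}(e)$. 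Then item 3 is immediate: $\OC^+_{\leq S}(e) = \bigcup_{0<s\leq S}\OC^+_s(e) = \OC^+_S(e)$ because item 2 shows the union is nested increasing with top element $\OC^+_S(e)$, and conversely $\OC^+_S(e)\subset\OC^+_{\leq S}(e)$ trivially. The only mild subtlety, and the step I would be most careful about, is the bookkeeping with prepending/appending zero-control segments and invoking $\XC(e)=0$ so that the trajectory genuinely rests at $e$; everything else is formal manipulation of the cocycle identity.
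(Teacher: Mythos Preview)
Your argument is correct and is exactly the standard derivation the paper points to: the paper does not give its own proof but cites \cite[Proposition 2]{Jouan} and notes the result is ``a consequence of the previous formula'' $\phi(t,gh,{\bf u})=\varphi_t(g)\phi(t,h,{\bf u})$, which is precisely the cocycle identity you build everything on. Your double-inclusion for item~1, the use of $\XC(e)=0$ to rest at $e$ under the zero control, and the nesting argument for item~3 are all standard and sound.

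Two minor clean-up remarks. In your first attempt at item~2 via item~1, the equality $\OC^+_{S_1}(e)=\varphi_{S_2}(\OC^+_{S_1}(e))\cdot e$ is false in general (the automorphism $\varphi_{S_2}$ moves the set); the correct way to use item~1 here is to swap the roles, writing $\OC^+_{S_2}(e)=\varphi_{S_1}(\OC^+_{S_2-S_1}(e))\,\OC^+_{S_1}(e)$ and noting $e=\varphi_{S_1}(e)\in\varphi_{S_1}(\OC^+_{S_2-S_1}(e))$. Your second, direct argument (prepend a zero-control segment of length $S_2-S_1$) is fine, though the displayed intermediate step should read $\phi(S_1,\phi(S_2-S_1,e,{\bf 0}),{\bf u}')$ rather than $\phi(S_2,\ldots)$; the conclusion $y=\phi(S_2,e,{\bf 0}*{\bf u}')\in\OC^+_{S_2}(e)$ is correct.
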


Also, due to the symmetry present in Lie groups, the LARC for linear control systems is determined at the origin of the group; in fact, the linear control system $\Sigma_G$ satisfies the LARC if $\g$ is the smallest $\DC$-invariant subalgebra containing the vectors $\{Y^1, \ldots, Y^m\}$.

A strong algebraic condition is given by the \textit{ad-rank condition}. We say that $\Sigma_G$ satisfies the ad-rank condition if $\g$ is the smallest $\mathcal{D}$-invariant subspace containing the vectors $\{Y^1, \ldots, Y^m\}$. The difference between the two conditions is given by the brackets among the elements in $\{Y^1, \ldots, Y^m\}$ that are present in the LARC and not on the ad-rank. In particular, if the system satisfies the ad-rank condition, it is locally controllable; that is,
   $$e\in\inner\OC^+_{S}(e)\cap \inner\OC^-_{S}(e),\hspace{.5cm}\forall S>0.$$

As showed in \cite{ADZ, DS}, there is a nice relationship between the subgroups $G^+, G^0$ and $G^-$ associated with the drift $\XC$ of a LCS $\Sigma_G$ and its dynamics. In fact, the following result holds:

\begin{theorem}
\label{subgrupos}
    Let $\Sigma_G$ be a LCS on a connected Lie group $G$ and assume that $\OC^+(e)$ is a neighborhood of the identity element $e\in G$. Then,
$$G^{-, 0}\subset\OC^+(e)\hspace{.5cm} \mbox{ and } \hspace{.5cm}G^{+, 0}\subset\OC^-(e).$$

Moreover, 
$$\CC_G=\overline{\OC^+(e)}\cap\OC^-(e),$$
is a control set of $\Sigma_G$ with a nonempty interior satisfying $G^0\subset \inner\CC_G$. If $G$ is solvable, $\CC_G$ is the unique control set of $\Sigma_G$ with nonempty interior, and $\Sigma_G$ is controllable if $G=G^0$ or, equivalently, if the derivation $\DC$ has only eigenvalues with zero real parts.

\end{theorem}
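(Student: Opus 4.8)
The plan is to prove Theorem \ref{subgrupos} in several stages, using the structure provided by the flow $\varphi_s$ of $\XC$, the formula $\phi(t,gh,{\bf u})=\varphi_t(g)\phi(t,h,{\bf u})$, and Proposition \ref{properties}. First I would establish the inclusions $G^{-,0}\subset\OC^+(e)$ and $G^{+,0}\subset\OC^-(e)$. Since $\OC^+(e)$ is assumed to be a neighborhood of $e$, one can pick a small $T>0$ with $e\in\inner\OC^+_T(e)$; using the cocycle formula from Proposition \ref{properties}, item 1, one gets $\OC^+_{nT}(e)\supset \varphi_{(n-1)T}\big(\cdots\big)$, so applying $\varphi$ to a fixed neighborhood $V$ of $e$ contained in $\OC^+_T(e)$ and iterating produces $\bigcup_{n\ge 0}\varphi_{nT}(V)\subset \overline{\OC^+(e)}$ — and in fact, by absorbing an extra factor of $V$ each step, inside $\OC^+(e)$ itself. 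For $g\in G^{-,0}$, the backward-contracting behaviour of $\varphi$ on $G^{-}$ (and the recurrence/boundedness on $G^0$) means that $\varphi_{-s}(g)\to e$-ish in the appropriate weak sense; more precisely, the standard argument (as in \cite{ADZ,DS}) shows any $g\in G^{-,0}$ can be written as a product of $\varphi_{nT}$-images of points close to $e$, hence lies in $\OC^+(e)$. The inclusion for $\OC^-(e)$ follows by applying the same reasoning to the time-reversed system, whose drift is $-\XC$ with derivation $-\DC$, swapping the roles of $G^+$ and $G^-$.

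Next I would show that $\CC_G:=\overline{\OC^+(e)}\cap\OC^-(e)$ is a control set with nonempty interior. The key point is that under the LARC, $e\in\inner\OC^+(e)\cap\inner\OC^-(e)$ (this is exactly the equivalence quoted in the excerpt from \cite[Section 3.2]{CK}), so $\CC_G$ is a neighborhood of $e$ and has nonempty interior. Approximate controllability (property (ii)) follows because for $x\in\CC_G$ one has $x\in\OC^-(e)$, hence $e\in\OC^+(x)$, and $\CC_G\subset\overline{\OC^+(e)}\subset\overline{\OC^+(x)}$ by transitivity of closures of positive orbits (using that $\OC^+(e)$ is a neighborhood of $e$ so $\overline{\OC^+(\overline{\OC^+(e)})}=\overline{\OC^+(e)}$). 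Weak invariance (property (i)): given any $y\in G$, write $y=ye$ and use $\phi(t,y,{\bf u})=\varphi_t(y)\phi(t,e,{\bf u})$; one steers $e$ into $\CC_G$ and then the trajectory stays there because $\CC_G$ is, by its very definition as $\overline{\OC^+(e)}\cap\OC^-(e)$, forward invariant once entered — this needs a short argument that $\OC^+(\CC_G)\cap\OC^-(e)\subset\CC_G$. Maximality (property (iii)) is the standard fact that $\overline{\OC^+(x)}\cap\OC^-(x)$ around a point $x$ with $x\in\inner\OC^+(x)\cap\inner\OC^-(x)$ is automatically a maximal set with these two properties; since $e$ is such a point and $\CC_G=\overline{\OC^+(e)}\cap\OC^-(e)$, we are done. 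I would also record $G^0\subset\inner\CC_G$: indeed $G^0\subset G^{-,0}\subset\OC^+(e)$ and $G^0\subset G^{+,0}\subset\OC^-(e)$, and since the flow on $G^0$ is by "bounded/recurrent" automorphisms, every point of $G^0$ is actually interior — one uses local controllability around $e$ transported along $G^0$ by $\varphi$, together with the fact that $\OC^+(e)$ is open near $e$, to thicken the inclusion to the interior.

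For the uniqueness when $G$ is solvable, I would argue by contradiction: suppose $D$ is another control set with nonempty interior, $D\ne\CC_G$. Pick $x\in\inner D$; then $x\in\inner\OC^+(x)\cap\inner\OC^-(x)$ and $D=\overline{\OC^+(x)}\cap\OC^-(x)$. The decisive input is the solvability of $G$, which forces the automorphism group generated by $\varphi$ together with the reachable dynamics to have a single "recurrent core": concretely, on a solvable Lie group the projection of any control set to $G/G^{+}$ or suitable quotients collapses, and the fixed-point/recurrence structure of $\varphi_s$ on a solvable group guarantees that $\overline{\OC^+(x)}$ must meet $\CC_G$; since control sets are pairwise disjoint and contain all recurrent points, $D$ would have to coincide with $\CC_G$. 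I would carry this out using the quotient structure: by solvability $G^{+,0}$ and $G^{-,0}$ are connected and their product is open in $G$ (in fact equals $G$ when $G$ is solvable and simply connected, more generally after projecting), so any $x$ can be connected to $e$ through the subgroups, forcing $x\in\overline{\OC^+(e)}\cap\OC^-(e)$ up to the required closures. Finally, when $G=G^0$, i.e. $\DC$ has only purely imaginary eigenvalues, one has $G^{+,0}=G^{-,0}=G$, so the inclusions of the first part give $G\subset\OC^+(e)$ and $G\subset\OC^-(e)$, whence $\CC_G=\overline{\OC^+(e)}\cap\OC^-(e)=G$ and the system is controllable; conversely, if there is an eigenvalue with nonzero real part, the corresponding non-recurrent direction (a continuous Lyapunov-type function strictly monotone along trajectories, cf. Remark \ref{function}) obstructs controllability, so $G=G^0$ is also necessary.

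The main obstacle I expect is the uniqueness step: proving that no second control set with nonempty interior can exist genuinely requires using solvability, not just the LARC, and the honest argument goes through a careful analysis of how $\varphi_s$ acts on the homogeneous spaces $G/G^{-,0}$ and $G/G^{+,0}$ and a fixed-point argument for the induced flow (solvable groups acting on themselves have enough structure — e.g. via the nilradical and the Lie–Kolchin-type triangularization of $\Ad$ combined with $\rme^{s\DC}$ — to pin down a unique attractor/repeller pair). Everything else (the two inclusions, and verifying the control-set axioms for $\CC_G$) is a fairly mechanical consequence of Proposition \ref{properties}, the conjugation formula for solutions, and the quoted characterization $x\in\inner\OC^+(x)\cap\inner\OC^-(x)\iff x\in\inner D$.
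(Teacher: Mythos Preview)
The paper does not actually prove Theorem~\ref{subgrupos}; it is stated as a known result imported from \cite{ADZ,DS} (see the sentence immediately preceding the theorem: ``As showed in \cite{ADZ, DS}, \dots''), and no proof is given in the paper. So there is no ``paper's own proof'' to compare your attempt against.

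That said, your sketch is broadly aligned with the strategy in those references, and you yourself invoke \cite{ADZ,DS} for the key step $G^{-,0}\subset\OC^+(e)$. A few remarks on the sketch as written. First, the inclusion $G^{-,0}\subset\OC^+(e)$ is the technical heart, and your description (``$\varphi_{-s}(g)\to e$-ish in the appropriate weak sense'') is not yet an argument; in the cited works this requires separate treatment of $G^-$ (genuine contraction) and $G^0$ (no contraction---one uses that $G^0$ acts by left translations compatible with the flow, plus local controllability, rather than any recurrence of $\varphi$ on $G^0$). Second, your uniqueness paragraph is essentially a restatement of the claim; the actual proof in \cite{DS} proceeds by showing that on a solvable group $G=G^{-,0}G^+=G^{+,0}G^-$, and then using the hyperbolic dynamics on $G^\pm$ to force any interior point of a putative second control set into $\overline{\OC^+(e)}$ and $\OC^-(e)$. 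Finally, the ``conversely'' clause you add at the end (eigenvalue with nonzero real part obstructs controllability) is not part of the statement of Theorem~\ref{subgrupos}: the theorem asserts only the implication $G=G^0\Rightarrow$ controllable, with ``equivalently'' referring to the equivalence $G=G^0\iff\DC$ has only purely imaginary eigenvalues, not to a biconditional with controllability.
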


We finish this section with a definition that will be useful to divide our analyses into cases.

\begin{definition}
    \label{nilrank}

    Let $\Sigma_{G}$ be a LCS on a connected Lie group $G$ with Lie algebra $\fg$. We define the {\it nilrank} of $\Sigma_G$ as the rank of the restriction $\DC|_{\fn}$, where $\DC$ is the derivation associated with the drift of $\Sigma_G$ and $\fn$ is the nilradical of $\fg$.
\end{definition}

\subsection{Lifting and projecting LCSs}

Let $G$ be a connected Lie group with Lie algebra $\fg$ and denote by $\widetilde{G}$ its simply connected covering group, chosen in such a way that the canonical projection $\pi:\widetilde{G}\rightarrow G$ satisfies $(d\pi)_{\widetilde{e}}=I_{\fg}$, and hence $\pi\circ\widetilde{\exp}=\exp$, where $\widetilde{\exp}$ and $\exp$ stand, respectively, for the exponential maps of $\widetilde{G}$ and $G$.

Now, if $\XC$ be a linear vector field on $G$ and denote by $\DC$ its associated derivation. By the previous choice and Theorem 2.2 of \cite{AyTi}, the derivation $\DC$ is associated with a unique linear vector field $\widetilde{\XC}$ on $\widetilde{G}$. If we denote by $\{\widetilde{\varphi}_s\}_{s\in\R}$ and $\{\varphi_s\}_{s\in\R}$, respectively, the flows of $\widetilde{\XC}$ and $\XC$, then
$$\forall X\in\fg, s\in\R \hspace{.5cm}\pi\left(\widetilde{\varphi}_s(\widetilde{\exp} X)\right)=\pi\left(\widetilde{\exp}\left(\rme^{s\DC}X\right)\right)=\exp \left(\rme^{s\DC}X\right)=\varphi_s(\exp X)=\varphi_s(\pi(\widetilde{\exp} X)).$$
By the connectedness of $\widetilde{G}$, we conclude that 
$$\forall s\in\R, \hspace{1cm}\pi\circ\widetilde{\varphi}_s=\varphi_s\circ\pi,$$
and hence $\widetilde{\XC}$ and $\XC$ are $\pi$-related vector fields.

In the same way, the property $\pi\circ\widetilde{\exp}=\exp$ implies that 
$$\forall s\in\R, Y\in\fg, g\in\widetilde{G}, \hspace{1cm} \pi(g\,\widetilde{\exp} \, sY)=\pi(g)\exp sY,$$
which implies that the left-invariant vector fields $\widetilde{Y}$ on $\widetilde{G}$ and $Y$ on $G$ determined by $Y$ are $\pi$-conjugated. As a consequence, any LCS $\Sigma_G$ on $G$ can be lifted (uniquely) to a LCS $\widetilde{\Sigma}_{\widetilde{G}}$ on $\widetilde{G}$ in such a way that $\widetilde{\Sigma}_{\widetilde{G}}$ and $\Sigma_G$ are $\pi$-conjugated.

\bigskip

Let $H\subset G$ be a closed subgroup. Following \cite{Jouan2}, a vector field $f$ on the homogeneous space $H\setminus G$ is said to be linear if it is $\pi$-conjugated to a linear vector field $\XC$ on $G$, where $\pi:G\rightarrow H\setminus G$ is the canonical projection, that is, 
$$f\circ\pi=\pi_*\circ \XC.$$
Moreover, by Proposition 4 of \cite{Jouan2} the previous is equivalent to the invariance of the subgroup $H$ by the flow $\{\varphi_t\}_{t\in\R}$ of $\XC$. In fact, if
$$\forall s\in\R\hspace{1cm}\varphi_s(H)\subset H,$$
the relation,
$$\Phi_s(\pi(g))=\pi(\varphi_s(g))\hspace{1cm}\forall g\in G, s\in\R,$$
 defines a flow $\Phi:\R\times H\setminus G\rightarrow H\setminus G$ on the homogeneous space $H\setminus G$, whose associated vector field 
 $$f(x):=\frac{d}{ds}_{|s=0}\Phi_s(x),$$
is $\pi$-conjugated to $\XC$.

As a consequence, any LCS $\Sigma_G$ is $\pi$-conjugated to a control-affine system $\Sigma_{H\setminus G}$ (also called a linear control system) if the subgroup $H$ is invariant by the flow of the drift $\XC$ of $\Sigma_G$.

\subsection{3D solvable, nonnilpotent Lie groups} \label{section3D}

We will consider here only the three-dimensional connected and simply connected Lie groups that are solvable and nonnilpotent, since from the general theory of Lie groups, the connected ones can be obtained as homogeneous spaces. 

By \cite[Theorem 1.4, Chapter 7]{onis}, the real Lie algebras of dimension 3 are given as a semidirect product of $\mathbb{R}$ with $\mathbb{R}^2$, i.e., $\g(\theta):= \mathbb{R}\times_{\theta} \mathbb{R}^2$ where $\theta(t):=t\theta$, with $\theta$ a $2\times 2$ matrix, which can take one of the following forms:

\begin{itemize}
    \item $\theta = \left(\begin{array}{cc}
                                 1 & 1 \\
                                 0 & 1
                          \end{array}\right)$
    \item $\theta = \left(\begin{array}{cc}
                                 1 & 0 \\
                                 0 & \gamma
                          \end{array}\right)$ with $|\gamma|\leq 1$
    \item $\theta = \left(\begin{array}{cc}
                                 \gamma & -1 \\
                                 1 & \gamma
                               \end{array}\right)$ with $\gamma \in \mathbb{R}$                      
\end{itemize}
The bracket of these Lie algebras is completely determined by the equation
$$[(t,0),(0,v)]=(0,t\theta v), \hspace{1cm}t\in\R, v\in\R^2.$$
Up to isomorphism, the unique connected, simply connected Lie group associated with the Lie algebra $\fg(\theta)$, is given by the semi direct product $\mathbb{R}\times_{\rho}\mathbb{R}^2$, with 
$$\rho:\R\rightarrow \mathrm{Gl}(2, \R), \hspace{1cm}\rho_t:=\mathrm{e}^{t\theta},$$
and the product of the group as
$$(t_1,v_1)(t_2,v_2)=(t_1+t_2,v_1 + \rho_{t_1}v_2)$$

with $(t_1,v_1), (t_2,v_2)\in \mathbb{R}\times_{\rho}\mathbb{R}^2$. Due to the dependence on $\theta$, we use the notation $G(\theta)=\R\times_{\rho}\R^2$, if $\rho_1=\rme^{\theta}$. 

For all $(t, v)\in G(\theta)$ and $w\in\R^2$, we have that
$$(t, v)(0, w)(t, v)^{-1}=(t, v+\rho_t w)(-t, -\rho_{-t}v)=(0, v+\rho_t w+\rho_t(-\rho_{-t}v)=(0, \rho_tw).$$
Let us use the previous to calculate some homogeneous spaces of $G(\theta)$. Precisely, let $w\in\R^2$ be a nonzero vector, and consider the subgroup $H=\R(0, w)$. By the previous, we get that 
$$H\cdot(t_1, v_1)=H \cdot(t_2, v_2)\hspace{.5cm}\iff\hspace{.5cm} t_1=t_2\hspace{.5cm}\mbox{ and } \hspace{.5cm} \langle v_1, R w\rangle=\langle v_2, R w\rangle,$$
implying that the canonical projection is given by $$\pi: G(\theta)\rightarrow H\setminus G(\theta), \hspace{1cm} \pi(t, v)=(t, \langle v, R w\rangle),$$
that is, $\pi$ coincides with the projection onto the first two components of the basis $\{(1, 0), (0, Rw), (0, w))\}$, when $G(\theta)$ is seen as the vector space $\R^3$. Moreover, $H\setminus G(\theta)$ is also a Lie group if and only if $H$ is a normal subgroup of $G(\theta)$ if and only if $w$ is an eigenvector of $\theta$.

On the other hand, the subgroup $S=\R(1, 0)$ is such that 
$$(t, 0)(t_1, v_1)=(t_2, v_2)\hspace{.5cm}\iff\hspace{.5cm} t=t_2-t_1\hspace{.5cm}\mbox{ and } \hspace{.5cm} \rho_tv_1=v_2,$$
and hence 
$$S\cdot(t_1, v_1)=S\cdot(t_2, v_2)\hspace{.5cm}\iff\hspace{.5cm} \rho_{-t_1}v_1=\rho_{-t_2}v_2.$$
As a consequence, the homogeneous space $S\setminus G(\theta)$ is naturally identified with $\R^2$ and its canonical projection is given by $$\pi:G(\theta)\rightarrow S\setminus G(\theta), \hspace{.5cm} \pi(t, v)=\rho_{-t}v.$$
In what follows, we define linear and left-invariant vector fields. Following \cite{DSAy1}, the left-invariant and linear vector fields on the groups $G(\theta)$ are given, respectively, as 

\begin{equation}\label{cii}
    Y^L(t,v)=(\alpha,\rho_t\eta)\hspace{.5cm}\mbox{ and }\hspace{.5cm}\mathcal{X}(t,v)=(0,Av + \Lambda_t^{\theta}\xi),
\end{equation}

where $(\alpha,\eta)\in \mathbb{R}\times\mathbb{R}^2$,  $\xi\in \mathbb{R}^2$ and $A\in \mathfrak{gl}(2,\mathbb{R})$ satisfies $A\theta=\theta A$. Moreover, for any $2\times 2$ matrix $B$, we have defined
$$
    \Lambda^{B}:\mathbb{R}\times \mathbb{R}^2\longrightarrow \mathbb{R}^{2},\;\;\; (t,v)\mapsto \int_0^t\mathrm{e}^{sB}vds.
$$
Such operator appears not only in the expression of linear vector fields but also in the expressions of the exponential map and the automorphisms of $G(\theta)$ (See \cite{DSAy1}).

Since the linear vector field $\mathcal{X}$ is fully characterized by the matrix $A$ and the vector $\xi$, we will usually write $\mathcal{X}=(A, \xi)$ to represent the linear vector field. The same holds for a left-invariant vector field, which we usually denote by $Y=(\alpha, \eta)$.

The flow associated with 
 the linear vector field $\mathcal{X}=(A, \xi)$ and the associated derivation are given explicitly by  

\begin{equation}
    \label{linearvector}
    \varphi_s(t,v)=(t,\mathrm{e}^{sA}v + \Lambda_t^\theta\Lambda_s^A\xi)\hspace{.5cm}\mbox{ and }\hspace{.5cm} \DC=\left(\begin{array}{cc}
        0 & 0 \\
        \xi &  A
    \end{array}\right).
\end{equation}

\subsection{Linear control systems on $G(\theta)$}

In this section, we define the linear control systems for the groups $G(\theta)$ we will be studying and comment on some of their properties.

\begin{definition}\label{system3d}
A (one-input) linear control system on $G(\theta)$ is given, in coordinates, by the family of ODE's 
 \begin{flalign*}
	  &&\left\{\begin{array}{l}
     \dot{t}=u\alpha\\
     \dot{v}=Av+\Lambda_t^{\theta}\xi+u\rho_t\eta
\end{array}\right.  &&\hspace{-1cm}\left(\Sigma_{G(\theta)}\right)
	  \end{flalign*}

with $\Omega=[u^-, u^+]$ and $u^-<0<u^+$. 
\end{definition}

It is straightforward to see that $\Sigma_{G(\theta)}$ satisfies the LARC if and only if 
\begin{equation}
\label{LARC}
    \alpha\left(\bigl\langle A(\alpha\xi+A\eta), R(\alpha\xi+A\eta)\bigr\rangle^2+\bigl\langle \theta(\alpha\xi+A\eta), R(\alpha\xi+A\eta)\bigr\rangle^2\right)\neq 0,
\end{equation}

and the ad-rank condition, if and only if
\begin{equation}
\label{adrank}
    \alpha\bigl\langle A(\alpha\xi+A\eta), R(\alpha\xi+A\eta)\bigr\rangle\neq 0,
\end{equation}
where $\langle\cdot, \cdot\rangle$ stands for the canonical product in $\R^2$ and $R$ by the counter-clockwise rotation of $\frac{\pi}{2}$-degrees. Geometrically, the previous states that $\Sigma_{G(\theta)}$ satisfies the LARC if and only if $\alpha\neq 0$ and $\alpha\xi+A\eta$ is not a common eigenvector of $A$ and $\theta$. Analogously, it satisfies the ad-rank condition if and only if $\alpha\neq 0$ and $\alpha\xi+A\eta$ is not an eigenvector of $A$.

In what follows, we state and prove some results on the conjugation of the systems $\Sigma_{G(\theta)}$. The aim is to find equivalent systems whose analysis is somehow simpler.

\begin{proposition}
\label{conjugation2}
Let $\Sigma_{G(\theta)}$ be a LCS on $G(\theta)$ with associated linear vector field $\XC=(A, \xi)$ and left-invariant vector field $Y=(\alpha, 0)$. Assume that $\Sigma_{G(\theta)}$ satisfies the LARC. It holds:

\begin{enumerate}
    \item There exists $\tilde{\psi}\in\mathrm{Aut}(G(\theta))$ conjugating $\Sigma_{G(\theta)}$ to a LCS $\tilde{\Sigma}_{G(\theta)}$ with left-invariant vector field $\tilde{Y}=(\alpha, \eta)$;

    \item If $\det A\neq 0$, there exists $\hat{\psi}\in\mathrm{Aut}(G(\theta))$ conjugating $\Sigma_{G(\theta)}$ to a LCS $\hat{\Sigma}_{G(\theta)}$ with linear vector field $\hat{\XC}=(A, 0)$.
\end{enumerate}

\end{proposition}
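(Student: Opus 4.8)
The plan is to realize both items through one explicit family of automorphisms of $G(\theta)$. For $\zeta\in\R^2$ set
$$\psi_\zeta:G(\theta)\to G(\theta),\qquad \psi_\zeta(t,v):=\bigl(t,\;v+\Lambda_t^\theta\zeta\bigr).$$
The first step is to verify that $\psi_\zeta\in\mathrm{Aut}(G(\theta))$. That $\psi_\zeta$ respects the product $(t_1,v_1)(t_2,v_2)=(t_1+t_2,v_1+\rho_{t_1}v_2)$ reduces, after expanding both sides, to the cocycle identity $\Lambda_{t_1+t_2}^\theta=\Lambda_{t_1}^\theta+\rho_{t_1}\Lambda_{t_2}^\theta$, which is immediate from the definition of $\Lambda^\theta$ together with $\rho_t=\rme^{t\theta}$; bijectivity is clear since $\psi_\zeta^{-1}=\psi_{-\zeta}$. (Each such $\psi_\zeta$ is one of the automorphisms of $G(\theta)$ described in \cite{DSAy1}, and is inner whenever $\det\theta\neq0$, but this will not be needed.)

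The second step is to compute the conjugated system. Since $\Lambda_t^\theta\zeta=t\zeta+O(t^2)$, differentiating $\psi_\zeta$ at $e=(0,0)$ gives $(\rmd\psi_\zeta)_e=\left(\begin{smallmatrix}1&0\\ \zeta&I_2\end{smallmatrix}\right)$. As $\psi_\zeta$ is an automorphism, $(\psi_\zeta)_*\XC$ is again linear, with associated derivation $(\rmd\psi_\zeta)_e\,\DC\,(\rmd\psi_\zeta)_e^{-1}$; substituting the block form $\DC=\left(\begin{smallmatrix}0&0\\ \xi&A\end{smallmatrix}\right)$ from \eqref{linearvector} and using that $\DC$ has vanishing first row, a short computation yields
$$(\rmd\psi_\zeta)_e\,\DC\,(\rmd\psi_\zeta)_e^{-1}=\begin{pmatrix}0&0\\ \xi-A\zeta&A\end{pmatrix},$$
so $(\psi_\zeta)_*\XC=(A,\xi-A\zeta)$; in particular the upper-left block is still $A$, so it still commutes with $\theta$ and the image is genuinely a LCS on the same group $G(\theta)$. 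For the left-invariant part, $(\psi_\zeta)_*Y=(\rmd\psi_\zeta)_e(\alpha,0)=(\alpha,\alpha\zeta)$.

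The third step is to choose $\zeta$. For item 1, the LARC forces $\alpha\neq0$ via \eqref{LARC}, so taking $\zeta:=\alpha^{-1}\eta$ produces a conjugated LCS $\tilde{\Sigma}_{G(\theta)}$ whose left-invariant vector field is $\tilde{Y}=(\alpha,\eta)$ (its drift being $\tilde{\XC}=(A,\xi-\alpha^{-1}A\eta)$). For item 2, assuming $\det A\neq0$, take $\hat{\psi}:=\psi_{A^{-1}\xi}$, so that $\hat{\XC}=(A,\xi-A(A^{-1}\xi))=(A,0)$. Since conjugacy by a diffeomorphism preserves the LARC, both resulting systems again satisfy the standing hypotheses of the proposition. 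I expect the only non-routine point to be the first step — recognizing the maps $\psi_\zeta$ as automorphisms of $G(\theta)$ and pinning down their action on the pair $(\XC,Y)$ — and this is exactly where the explicit descriptions of $\mathrm{Aut}(G(\theta))$ and of the operator $\Lambda^\theta$ from \cite{DSAy1} do the work; everything downstream is bookkeeping.
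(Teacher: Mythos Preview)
Your proof is correct and uses the same family of automorphisms $\psi_\zeta(t,v)=(t,v+\Lambda_t^\theta\zeta)$ as the paper (with the same choices $\zeta=\alpha^{-1}\eta$ and $\zeta=A^{-1}\xi$); the only cosmetic difference is that you compute the effect at the Lie-algebra level via $(\rmd\psi_\zeta)_e\,\DC\,(\rmd\psi_\zeta)_e^{-1}$, whereas the paper pushes forward $\XC$ and $Y^L$ pointwise.
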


\begin{proof}
Following \cite[Proposition 2.1 and 2.2]{DSAyDG}, for any $\delta\in\R^2$ and $P\in \mathrm{Gl}(2, \R)$, with $P\theta=\theta P$, the map
$$\psi:G(\theta)\rightarrow G(\theta), \hspace{1cm}\psi(t, v)=(t, Pv+\Lambda_t^{\theta}\delta),$$
is an automorphism of $G(\theta)$ whose differential is given by 
$$(d\psi)_{(t, v)}(a, w)=(a, Pw+a\rho_t\delta).$$

1. Since we are assuming the LARC, $\alpha\neq 0$, and hence, 
    \begin{equation*}
        \tilde{\psi}:G(\theta)\rightarrow G(\theta), \hspace{1cm}\tilde{\psi}(t, v)=(t, v-\alpha^{-1}\Lambda^{\theta}_t\eta),
    \end{equation*}
   is an automorphism, satisfying
$$(d\tilde{\psi})_{(t, v)}Y^L(t, v)= (d\tilde{\psi})_{(t, v)}(\alpha, \rho_t\eta) =(\alpha, \rho_t\eta-\alpha\alpha^{-1}\rho_t\eta)=(\alpha, 0)=\tilde{Y}(\psi(t, v)),$$
and, if $\tilde{\XC}=(A, \xi+\alpha^{-1}A\eta)$, then
$$\hspace{-1cm}(d\psi)_{(t, v)}\XC(t, v)=(d\psi)_{(t, v)}(0, Av+\Lambda^{\theta}_t\xi)=(0, Av+\Lambda^{\theta}_t\xi)$$
$$\hspace{2cm}=\bigl(0, A(v- \alpha^{-1}\Lambda_t^{\theta}\eta)+\Lambda_t^{\theta}(\xi+\alpha^{-1}A\eta)\bigr)=\tilde{\XC}(\tilde{\psi}(t, v)),$$
showing that $\Sigma_{G(\theta)}$ is $\tilde{\psi}$-conjugated to the system $\tilde{\Sigma}_{G(\theta)}$ determined by $\tilde{\XC}$ and $\tilde{Y}$.

2. On the other hand, if $\det A\neq 0$ the map 
$$\hat{\psi}:G(\theta)\rightarrow G(\theta), \hspace{1cm} \hat{\psi}(t, v)=(t, v+\Lambda_tA^{-1}\xi),$$
is a well defined automorphism of $G(\theta)$ 
 satisfying
$$\forall (a, w)\in T_{(t, v)}G(\theta), \hspace{1cm}(d\hat{\psi})_{(t, v)}(a, w)=(a, w+a\rho_tA^{-1}\xi).$$
Moreover, 
$$A\theta=\theta A\;\;\;\;\mbox{ implies that }\;\;\;\;\forall t\in\R, w\in\R^2, \;\;\; A\Lambda^{\theta}_t w=\Lambda^{\theta}_t Aw,$$
and hence,  
$$(d\hat{\psi})_{(t, v)}\XC(t, v)=\hat{\XC}(\hat{\psi}(t, v))\hspace{.5cm}\mbox{ and }\hspace{.5cm}(d\hat{\psi})_{(t, v)}Y(t, v)=\hat{Y}(\hat{\psi}(t, v)),$$
where $\hat{\XC}(t, v)=(0, Av)$ and $\hat{Y}=(\alpha, \alpha A^{-1}\xi+\eta)$, showing that $\Sigma_{G(\theta)}$ is equivalent to the LCS $\hat{\Sigma}_{G(\theta)}$ determined by $\hat{\XC}$ and $\hat{Y}$.    
\end{proof}

\bigskip

For a LCS $\Sigma_{G(\theta)}$ on $G(\theta)$ with associated linear vector field $\XC=(A, \xi)$, the definition of nilrank (Definition \ref{nilrank}) coincides with the rank of the matrix $A$. In fact, since the derivation $\DC$ associated to $\XC=(A, \xi)$ is $\DC=\left(\begin{array}{cc}
   0  & 0 \\
    \xi & A
\end{array}\right)$ and the nilradical of $\fg(\theta)$ is $\fn=\{0\}\times\R^2$, we have that 
$\DC|_{\fn}=A$.

The next result shows that, up to equivalence, the dynamics of a LCS on $G(\theta)$, with nilrank two, is the same as the dynamics of the product of a homogeneous system on $\R$ and the linear control system induced on the homogeneous space $G^0\setminus G(\theta)$, where $G^0$ is the set of singularites of the drift.

\begin{proposition}
\label{conjugation3}
Let $\Sigma_{G(\theta)}$ be a LCS with nilrank two on $G(\theta)$. Then, there exist a diffeomorphims $\psi:G(\theta)\rightarrow \R\times\R^2$ that conjugates $\Sigma_{G(\theta)}$ to the control-affine system on $\R\times\R^2$ 
 \begin{flalign*}
	  &&\left\{\begin{array}{l}
     \dot{t}=u\alpha\\
     \dot{v}=(A-u\alpha\theta)v+u\eta
\end{array}\right.  &&\hspace{-1cm}\left(\Sigma_{\R\times\R^2}\right)
	  \end{flalign*}

Moreover, if $G^0$ is the set of singularities of $\XC$, the linear control system $\Sigma_{G^0\setminus G(\theta)}$ induced on the homogeneous space $G^0\setminus G(\theta)$ is equivalent to 
 \begin{flalign*}
	  &&
     \dot{v}=(A-u\alpha\theta)v+u\eta, \; u\in\Omega
  &&\hspace{-1cm}\left(\Sigma_{\R^2}\right)
	  \end{flalign*}
   by the unique diffeomorphism $$\widetilde{\psi}: G^0\setminus G(\theta)\rightarrow \R^2\hspace{.5cm} \mbox{ satisfying }\hspace{.5cm}\widetilde{\psi}\circ\pi=\pi_2\circ\psi,$$
   where $\pi:G(\theta)\rightarrow G^0\setminus G(\theta)$ is the canonical projection and $\pi_2:\R\times\R^2\rightarrow\R^2$ the projection onto the second component.
\end{proposition}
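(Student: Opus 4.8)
The plan is to build the diffeomorphism $\psi$ by "undoing" the part of the flow of $\XC$ that is responsible for the appearance of the operator $\Lambda_t^\theta$ in the coordinates of $\Sigma_{G(\theta)}$. Since the nilrank is two, $\det A\neq 0$, so by part 2 of Proposition~\ref{conjugation2} I may assume from the start that $\XC=(A,0)$, i.e.\ that the $\Lambda_t^\theta\xi$ term has already been removed, at the cost of replacing $\eta$ by a new left-invariant direction $\eta':=\alpha A^{-1}\xi+\eta$. (I will need to check at the end that this preliminary normalization does not change the stated target system, or absorb the change into the data; more likely the cleanest route is to keep $\xi$ general and produce $\psi$ directly.) The natural candidate is the change of coordinates that "freezes" the $G^0$-direction: recall from \eqref{linearvector} that the singular set $G^0=\fix(\varphi)$ consists of those $(t,v)$ with $e^{sA}v+\Lambda_t^\theta\Lambda_s^A\xi=v$ for all $s$, and since $\det A\neq 0$ this pins $v$ down as an affine function of $t$, say $v=c(t)$ with $c(t)=-A^{-1}\Lambda_t^\theta\xi$ (using $A\Lambda_t^\theta=\Lambda_t^\theta A$). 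So I define
\[
\psi:G(\theta)\to\R\times\R^2,\qquad \psi(t,v)=\bigl(t,\ \rho_{-t}(v-c(t))\bigr)
\]
or some close variant; the factor $\rho_{-t}=e^{-t\theta}$ is there precisely because, in the description of $S\setminus G(\theta)$ in Section~\ref{section3D}, it is $\rho_{-t}v$ that is the invariant coordinate, and it is what will convert the $\rho_t\eta$ control term into a plain $\eta$ and convert the left-invariant flow into the extra $-u\alpha\theta$ term. Since $\rho_{-t}$ is invertible and $c$ is smooth, $\psi$ is a diffeomorphism with an explicit smooth inverse.

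The key computational step is then to push the two defining vector fields of $\Sigma_{G(\theta)}$ forward under $\psi$ and check they become $f_0(t,v)=(0,Av)$ (or $(0,(A)v)$ after absorbing terms) and $f_1(t,v)=(\alpha,\text{(affine in }v)\,)$ matching $\dot t=u\alpha$, $\dot v=(A-u\alpha\theta)v+u\eta$. Concretely: write $w=\rho_{-t}(v-c(t))$, differentiate along a solution, and use $\dot t=u\alpha$, $\dot v=Av+\Lambda_t^\theta\xi+u\rho_t\eta$; the $\frac{d}{dt}\rho_{-t}=-\theta\rho_{-t}\,\dot t=-u\alpha\,\theta\rho_{-t}$ term produces exactly the $-u\alpha\theta w$ summand, the $\dot c(t)\dot t$ term is engineered to cancel the $\Lambda_t^\theta\xi$ drift together with $A c(t)$ (this is where $c(t)=-A^{-1}\Lambda_t^\theta\xi$ and the commutation $A\Lambda_t^\theta=\Lambda_t^\theta A$ are used), and $\rho_{-t}\cdot u\rho_t\eta=u\eta$ gives the control term. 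This is the standard but slightly fiddly "chain rule on a semidirect product" computation; I expect this to be the main obstacle, not because any single step is hard but because one must keep straight which $\theta$, $A$, $\rho_t$, $\Lambda_t^\theta$ commute with which, and get the affine correction $c(t)$ exactly right so that all $t$-dependent terms cancel. It may be cleaner to first apply Proposition~\ref{conjugation2}(2) to kill $\xi$, so that $c\equiv 0$ and $\psi(t,v)=(t,\rho_{-t}v)$, and only the $\rho_{-t}$ bookkeeping remains; then restore the general case by composing automorphisms.

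Finally, for the second assertion: the subgroup $G^0$ is invariant under the flow $\{\varphi_s\}$ of the drift $\XC$ (it is the set of singularities, hence flow-invariant; alternatively this is part of Proposition~2.9 of \cite{DS}), so by the discussion in Section~2.3 the LCS $\Sigma_{G(\theta)}$ is $\pi$-conjugate to a well-defined linear control system $\Sigma_{G^0\setminus G(\theta)}$ on the homogeneous space. Since $\psi$ carries $G^0$ onto $\{(t,c(t)):t\in\R\}$ — equivalently, in the $\xi=0$ reduction, onto $\R\times\{0\}$ — and hence carries the fibers of $\pi:G(\theta)\to G^0\setminus G(\theta)$ onto the fibers of $\pi_2:\R\times\R^2\to\R^2$ composed with the $\rho_{-t}$-twist, the map $\widetilde\psi$ determined by $\widetilde\psi\circ\pi=\pi_2\circ\psi$ is well-defined, smooth, bijective with smooth inverse, and by Proposition~\ref{conjugation} (or directly) intertwines $\Sigma_{G^0\setminus G(\theta)}$ with the projected system $\dot v=(A-u\alpha\theta)v+u\eta$ on $\R^2$, i.e.\ $\Sigma_{\R^2}$. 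Uniqueness of $\widetilde\psi$ is immediate from surjectivity of $\pi$. The only point needing a line of care here is that $G^0$ is exactly $\{(t,c(t))\}$ and that this is one-dimensional (equivalently $\fg^0=\R\times\{0\}$ up to the affine twist), which follows from $\det A\neq0$ via \eqref{linearvector}.
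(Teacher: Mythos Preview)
Your proposal is correct, and your ``cleaner route'' --- first normalize to $\xi=0$ via Proposition~\ref{conjugation2}(2), then take $\psi(t,v)=(t,\rho_{-t}v)$ --- is exactly the paper's proof, after which one checks directly that $\pi_2\circ\psi(t,v)=\rho_{-t}v=\pi(t,v)$, so $\widetilde\psi=I_{\R^2}$ and $G^0=S=\R(1,0)$. One small caveat on your direct route with $c(t)=-A^{-1}\Lambda_t^\theta\xi$: the term $\dot c(t)\,\dot t$ does \emph{not} help cancel the $\Lambda_t^\theta\xi$ drift (that cancellation comes from $Ac(t)$ alone) but instead contributes $u\alpha A^{-1}\xi$ to the control direction, shifting $\eta$ to $\eta+\alpha A^{-1}\xi$ --- which is harmless, since that is precisely the renamed $\eta$ one gets from Proposition~\ref{conjugation2}(2) anyway.
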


\begin{proof} Since $\Sigma_{G(\theta)}$ has nilrank two, we have that $\det A\neq 0$ and by the previous proposition, we can assume, w.l.o.g., that $\Sigma_{G(\theta)}$ is determined by the vectors 
$\XC=(A, 0)$ and $Y=(\alpha, \eta)$.

Define the map 
$$\psi:G(\theta)\rightarrow \R\times\R^2, \hspace{1cm}\psi(t, v)=(t, \rho_{-t}v).$$
It holds that $\psi$ is a diffeomorphism, and it satisfies 
$$\forall (a, w)\in T_{(t, v)}G(\theta), \hspace{1cm}(d\psi)_{(t, v)}(a, w)=(a, -a\theta\rho_{-t}v+\rho_{-t}w).$$
Consequently,
$$(d\psi)_{(t, v)}\XC(t, v)=\XC(\psi(t, v))\hspace{.5cm}\mbox{ and }\hspace{.5cm} (d\psi)_{(t, v)}Y^L(t, v)=Z(\psi(t, v)),$$
where $Z(t, v)=(\alpha, -\alpha \theta v+\eta)$.

 Therefore, $\Sigma_{G(\theta)}$ is equivalent to the control-affine system on $\R\times\R^2$ given by
 \begin{flalign*}
	  &&\left\{\begin{array}{l}
     \dot{t}=u\alpha \\
     \dot{v}=(A-u\alpha\theta)v+u\eta
\end{array}\right., \; u\in\Omega &&\hspace{-1cm}\left(\Sigma_{\R\times\R^2}\right)
	  \end{flalign*}
showing the first assertion. On the other hand, if $\pi_2:\R\times\R^2\rightarrow \R^2$ is the projection onto the second component, 
$$\pi_2\circ\psi(t, v)=\pi_2(t, \rho_{-t}v)=\rho_{-t}v=\pi(t, v),$$
where $\pi: G(\theta)\rightarrow S\setminus G(\theta)$ is the canonical projection as shown in the beginning of Section \ref{section3D}. Since $S=\R(1, 0)=G^0$ is exactly the set of singularities of $\XC=(0, A)$, we conclude that the linear control system $\Sigma_{G^0\setminus G(\theta)}$ induced by $\pi$ coincides with the projection onto $\R^2$ of the system $\Sigma_{\R\times\R^2}$ obtained previously, that is, $\widetilde{\psi}=I_{\R^2}$ conjugates $\Sigma_{G^0\setminus G(\theta)}$ to the control-affine system
\begin{flalign*}
	  &&
     \dot{v}=(A-u\alpha\theta)v+u\eta, \; u\in\Omega,
  &&\hspace{-1cm}\left(\Sigma_{\R^2}\right)
	  \end{flalign*}
   concluding the proof.
\end{proof}

\begin{remark}
Although the subjacent manifold of $G(\theta)$ is $\R\times\R^2$, the change in notations made in the previous result is to emphasize the fact that the system $\Sigma_{\R\times\R^2}$ is not a linear control system. When working with such a system, we will always use the previous change in notation.
\end{remark}

\section{LCSs with nilrank two on $G(\theta)$}

In this section, we study the LCSs of the groups $G(\theta)$ having nilrank-two. In order to do that, we start with a full investigation of a particular class of control-affine systems on $\R^2$, whose dynamics is intrinsically associated with the LCSs on $G(\theta)$.

\subsection{Control-affine systems on $\R^2$}

In this section, we study the class of control-affine systems over $\R^2$ given by the family of differential equations.
\begin{flalign*}
	  &&
     \dot{v}=(A-u\theta)v+u\eta, \; u\in\Omega,
  &&\hspace{-1cm}\left(\Sigma_{\R^2}\right)
	  \end{flalign*}

where $\eta\in\R^2$ is a fixed nonzero vector and $A, \theta\in\mathrm{gl}(2, \R)$ satisfying $\det A\neq 0$ and $[A,\theta]=0$ \footnote{ The conditions on the matrices $A$ and $\theta$ are in accordance with the ones satisfied by linear control systems on the homogeneous spaces $S\setminus G(\theta)$ as obtained in Section 2.}. Moreover, the LARC for $\Sigma_{\R^2}$ is given by equation (\ref{LARC}) with $\alpha=1$.

\begin{remark}
It is important to comment that a more general study of control-affine systems on higher-dimensional Euclidean spaces was done recently in \cite{CK2}. Despite this fact, a full independent analysis of the system $\Sigma_{S\setminus G(\theta)}$ is done here by considering the dynamics of $2\times 2$ matrices.
\end{remark}

\bigskip

Let us define $A(u):=A-u\theta$. The solutions of $\Sigma_{\R^2}$ are built through concatenations of the solutions for constant controls $u\in\Omega$ given by

$$\phi(s, v, u)=\rme^{sA(u)}v_0+\int_0^s\rme^{(s-\tau)A(u)}u\eta d\tau.$$
In particular, if $\det A(u)\neq 0$, we get that 
$$\phi(s, v, u)=\rme^{sA(u)}(v_0-v(u))+v(u),\hspace{.5cm}\mbox{ where }\hspace{.5cm} v(u):=-uA(u)^{-1}\eta,$$
are equilibrium points of the $\Sigma_{S\setminus G(\theta)}$, that is, they satisfy the equation
\begin{equation}\label{equilibrium}
    A(u)v+u\eta=0.
\end{equation}

Moreover, by a simple calculation, one can show that  
\begin{equation}
    \label{formula}
\phi(s,v,\textbf{u})=\rme^{\sum_{i=1}^{n}s_iA(u_i)}v+\phi(s,0,\textbf{u}),
\end{equation}
where $0=s_0<s_1<\cdots<s_n$ and ${\bf u}_{|[s_j, s_{j+1})}=u_j\in\Omega$ for $j=0, \ldots, n-1$. Such property will come into play when analyzing the periodic orbits ahead.

Since we are assuming that $\det A\neq0$, the subset of $\Omega$ given by 
$$\widehat{\Omega}:=\{u\in\inner\Omega; \;\det A(u)\neq 0\},$$
is an open neighborhood of $0\in\R$. Moreover, the map 
$v: u\in\widehat{\Omega}\mapsto  v(u)=-uA(u)^{-1}\eta\in \R^2,$ 
is a smooth, regular curve. In fact, a simple derivation of the equation \ref{equilibrium}, gives us that 

\begin{equation*}
    -\theta v(u)+A(u)v'(u)=-\eta\hspace{.5cm}\implies\hspace{.5cm}v'(u)=-A(u)^{-1}(\eta-\theta v(u)).
\end{equation*}

Therefore,

\begin{equation*}
    v'(u)=0\hspace{.5cm}\stackrel{\det A(u)\neq 0}{\iff}\hspace{.5cm} \eta=\theta v(u)=-uA(u)^{-1}\theta\eta
\end{equation*}

and this last equation is equivalent to 

\begin{equation*}
    A\eta-u\theta\eta=A(u)\eta=-u\theta\eta,
\end{equation*}

from which we conclude that

\begin{equation*}
    v'(u)=0\hspace{.5cm}\iff\hspace{.5cm} A\eta=0.
\end{equation*}

Since we are assuming that $\det A\neq 0$ and $\eta\neq 0$, we conclude that $v'(u)\neq 0$, showing the regularity of the curve.

\subsubsection{Control sets with nonempty interior of $\Sigma_{\R^2}$}

In this section, we show the existence of control sets with nonempty interiors  for the control-affine system $\Sigma_{\R^2}$ introduced previously. We start with a result concerning the positive and negative orbits at equilibrium points of the system.

\begin{proposition}
\label{opensigma2}

If $\Sigma_{\R^2}$ satisfies the LARC then, with the exception of, at most, one $u\in\widehat{\Omega}$, the sets
$$\OC^+(v(u))\hspace{.5cm}\mbox{ and }\hspace{.5cm}\OC^-(v(u)),$$
are open sets.
 
\end{proposition}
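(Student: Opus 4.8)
The plan is to reduce the assertion to a local controllability property at the equilibria $v(u)$, and then to a Kalman‑type condition on a single matrix pair, which can fail for at most one value of $u$.

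First I would isolate the following elementary fact: for a control‑affine system, if a point $p$ satisfies $p\in\inner\OC^{+}(p)$ then $\OC^{+}(p)$ is open, and symmetrically $p\in\inner\OC^{-}(p)$ makes $\OC^{-}(p)$ open. Indeed, given $q=\phi(S,p,{\bf u})\in\OC^{+}(p)$ with $S>0$, the time‑$S$ flow map $\phi(S,\cdot,{\bf u})$ is a diffeomorphism of $\R^{2}$ (a finite composition of flows of the affine fields $f_{0}+u_{i}f_{1}$); it carries a neighbourhood $W\subset\OC^{+}(p)$ of $p$ onto a neighbourhood of $q$, and $\phi(S,W,{\bf u})\subset\OC^{+}(p)$ by transitivity of reachability. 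The statement for $\OC^{-}(p)$ is the same argument with the inverse flow. Hence it is enough to show that $v(u)\in\inner\OC^{+}(v(u))\cap\inner\OC^{-}(v(u))$ for all but at most one $u\in\widehat{\Omega}$.

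Next, since $u\in\widehat{\Omega}\subset\inner\Omega$, small perturbations of the constant control $u$ are admissible, so $v(u)$ is an interior equilibrium of $\Sigma_{\R^{2}}$; by the classical linearization criterion for local controllability it then suffices to check that the linearization of $\Sigma_{\R^{2}}$ at $(v(u),u)$ is controllable. Linearizing $\dot v=Av+u(-\theta v+\eta)$ at $(v(u),u)$ yields $\dot w=A(u)w+(\delta u)\,b$ with $b=\eta-\theta v(u)$. Using $[A,\theta]=0$ and $v(u)=-u\,A(u)^{-1}\eta$ one computes $b=\eta+u\,A(u)^{-1}\theta\eta=A(u)^{-1}\bigl(A(u)\eta+u\theta\eta\bigr)=A(u)^{-1}A\eta$, so $A(u)b=A\eta$ and the Kalman matrix is $[\,A(u)^{-1}A\eta\,,\,A\eta\,]$. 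This is invertible exactly when $A\eta$ is not an eigenvector of $A(u)$; and because $A$ is invertible and commutes with $A(u)=A-u\theta$, that happens precisely when $\eta$ itself is not an eigenvector of $A-u\theta$.

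Finally, $\eta$ fails to be an eigenvector of $A-u\theta$ iff the component of $(A-u\theta)\eta$ along $R\eta$ is nonzero, i.e. iff
$$ g(u):=\langle A\eta,R\eta\rangle-u\,\langle\theta\eta,R\eta\rangle\neq 0 . $$
This $g$ is affine in $u$, and it is not identically zero: $g\equiv 0$ would force $\langle A\eta,R\eta\rangle=\langle\theta\eta,R\eta\rangle=0$, i.e. $\eta$ would be a common eigenvector of $A$ and $\theta$, contradicting the LARC. Hence $g$ has at most one root $u_{\ast}$, and for every $u\in\widehat{\Omega}\setminus\{u_{\ast}\}$ the linearization at $(v(u),u)$ is controllable, so $v(u)$ lies in the interior of both $\OC^{+}(v(u))$ and $\OC^{-}(v(u))$, which are therefore open by the first step. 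The step requiring the most care is the passage from the Kalman condition to $g(u)\neq 0$: it hinges on the identity $b=A(u)^{-1}A\eta$ and on $A$, $\theta$ and $A-u\theta$ sharing their invariant lines, both consequences of $[A,\theta]=0$; the single possible exception is then simply the unique root of the affine function $g$, which is genuinely present only when the LARC holds but the associated ad‑rank‑type nondegeneracy fails.
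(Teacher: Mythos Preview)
Your argument is correct, and it reaches exactly the same algebraic obstruction as the paper does---namely, that the exceptional control $u_0\in\widehat{\Omega}$ is characterized by $\langle A(u_0)\eta,R\eta\rangle=0$, and that two such values would force $\eta$ to be a common eigenvector of $A$ and $\theta$, contradicting the LARC.

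The route, however, is genuinely different. The paper proceeds constructively: it builds an explicit two--parameter map
\[
f(u_1,u_2)=\phi\bigl(s,\phi(s,v(u),u_1),u_2\bigr),
\]
computes its partial derivatives at $(u,u)$ by hand, and then shows (through two separate claims about $\det(I-\rme^{sA(u)})$ and $\langle \rme^{sA(u)}v'(u),Rv'(u)\rangle$) that the differential is surjective for small $s>0$ except possibly at one $u$. You instead invoke the linearization principle for local controllability at an interior equilibrium and reduce everything to the Kalman rank condition for the pair $(A(u),b)$ with $b=\eta-\theta v(u)$. Your key identity $A(u)b=A\eta$, together with the commutation $[A,A(u)]=0$ and $\det A\neq 0$, makes the Kalman condition transparent: it fails precisely when $\eta$ is an eigenvector of $A(u)$, i.e.\ when the affine function $g(u)=\langle A\eta,R\eta\rangle-u\langle\theta\eta,R\eta\rangle$ vanishes.

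What each approach buys: yours is shorter and conceptually cleaner, and it makes the affine dependence on $u$ (hence the ``at most one exception'') immediate. The paper's approach is self--contained in that it does not appeal to the linearization criterion as a black box, and its explicit computations feed directly into the subsequent Remark (that at the exceptional $u_0$ the matrix $A(u_0)$ is scalar), which is then used in the proof of Proposition~\ref{controlaffine}. Your derivation yields the same conclusion $\langle A(u_0)\eta,R\eta\rangle=0$, so that remark is still available from your argument.
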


\begin{proof}
Since the proofs for the positive and negative orbits are analogous, we show only the positive case. Moreover, since the positive orbit is positively invariant, $\OC^+(v(u))$ is open if and only if $v(u)\in \inner \OC^{+}(v(u))$. 

Let us now consider $s>0$ and $u\in\widehat{\Omega}$, and define the map

	$$f: \widehat{\Omega}^2\rightarrow\R^2, \;\;f(u_1, u_2):=\rme^{sA(u_2)}\left(\rme^{sA(u_1)}\left(v(u)-v(u_1)\right)+v(u_1)-v(u_2)\right)+v(u_2).$$
 
We observe that,

	$$f(u, u)=v(u)\;\;\mbox{ and  }\;\;f(u_1, u_2)=\phi(s, \phi(s, v(u), u_1), u_2)),\;\;\;\mbox{ implying that }\;\;\; f(\widehat{\Omega}^2)\subset\mathcal{O}^+(v(u)).$$

 As a consequence, $v(u)\in \inner\OC^+(v(u))$ if the differencial of $f$ is surjective on $(u, u)\in \widehat{\Omega}^2$. Calculating the partial derivatives of $f$, we obtain that

	$$\frac{\partial f}{\partial u_1}(u_1, u_2)=\rme^{sA(u_2)}\left(-s\theta\rme^{sA(u_1)}(v(u)-v(u_1))+(I_{\R^2}-\rme^{sA(u_1)})v'(u_1)\right)\hspace{.5cm}\mbox{ and},$$
	$$\frac{\partial f}{\partial u_2}(u_1, u_2)=-s\theta\rme^{sA(u_2)}\left(\rme^{sA(u_1)}(v(u)-v(u_1))+v(u_1)-v(u_2)\right)+(I_{\R^2}-\rme^{sA(u_2)})v'(u_2).$$
 
Evaluation at $(u, u)$ gives us,

	$$\frac{\partial f}{\partial u_1}(u, u)=\rme^{sA(u)}(I_{\R^2}-\rme^{sA(u)})v'(u)\hspace{1cm}\mbox{ and }\hspace{1cm}\frac{\partial f}{\partial u_2}(u, u)=(I_{\R^2}-\rme^{sA(u)})v'(u).$$

Let $R$ be a counterclockwise rotation of $\pi/2$. Then, 

$$\left\langle\frac{\partial f}{\partial u_1}(u, u), R\frac{\partial f}{\partial u_2}(u, u)\right\rangle=\left\langle \rme^{sA(u)}(I_{\R^2}-\rme^{sA(u)})v'(u), R(I_{\R^2}-\rme^{sA(u)})v'(u)\right\rangle$$
$$=\left\langle (I_{\R^2}-\rme^{sA(u)})\rme^{sA(u)}v'(u), R(I_{\R^2}-\rme^{sA(u)})v'(u)\right\rangle=\det(I_{\R^2}-\rme^{sA(u)})\left\langle \rme^{sA(u)}v'(u), R v'(u)\right\rangle,$$
where for the third equation we used that $\rme^{sA(u)}$ and $(I_{\R^2}-\rme^{sA(u)})$ commute and for the last one that $B(v, w)=\langle v,R w\rangle$ is an alternating bi-linear form. Consequently, 
$$
    \left\langle\frac{\partial f}{\partial u_1}(u, u), R\frac{\partial f}{\partial u_2}(u, u)\right\rangle\hspace{.5cm} \text{is}\hspace{.5cm}\text{L.I}\hspace{.5cm}\iff\hspace{.5cm} \det\left(1-\rme^{sA(u)}\right)\left\langle \rme^{sA(u)}v'(u), R v'(u)\right\rangle\neq 0.
$$
{\bf Claim 1:} For any $u\in\widehat{\Omega}$, there exists $\delta=\delta(u)>0$ such that $\det(I_{\R^2}-\rme^{sA(u)})\neq 0$ for all $s\in (0, \delta)$.

In fact, if for some $s_0>0$ and $u_0\in \widehat{\Omega}$ it holds that $\det(I_{\R^2}-\rme^{s_0A(u_0)})= 0$, there exists $v\neq 0$ satisfying $\rme^{s_0A(u_0)}v=v$. Now, if the eigenspace associated with the eigenvalue 1 has dimension 1, the fact that $A(u_0)$ and $\rme^{s_0A(u_0)}$ commute implies necessarily that $v$ is also an eigenvector of $A(u_0)$. However, $A(u_0)v=av$ implies that $$v=\rme^{s_0A(u)}v=\rme^{s_0a}v\hspace{.5cm}\implies\hspace{.5cm} a=0,$$
    contradicting the fact that  $u_0\in\widehat{\Omega}$. As a consequence,  $\rme^{s_0A(u_0)}=I_{\R^2}$ and from the relation $e^{s_0A(u_0)}=e^{s_0\operatorname{tr}A(u_0)}$, we obtain that $\operatorname{tr}A(u_0)=0$, allowing us to conclude that $A(u_0)$ has a pair of purely imaginary eigenvalues. Therefore, on some basis, 

\begin{equation*}
    A(u_0)=\left(\begin{array}{cc}
        0 & -a_0 \\
        a_0 & 0
    \end{array}\right), \hspace{.5cm} a_0\neq 0\hspace{.5cm}\implies\hspace{.5cm}\forall s\in\R, \hspace{.5cm}
    \rme^{sA(u_0)}=\left(\begin{array}{cc}
        \cos{sa_0} & -\sin{sa_0} \\
        \sin{sa_0} & \cos{sa_0}
    \end{array}\right),
\end{equation*}
and hence,
$$\det(I_{\R^2}-\rme^{s_0A(u_0)})=2(1-\cos{s_0a_0})=0\hspace{.5cm}\iff\hspace{.5cm} a_0s_0=\frac{\pi}{2}+k\pi, k\in\Z.$$
and so,
$$\det(I_{\R^2}-\rme^{sA(u_0)})\neq 0, \hspace{.5cm}\forall s\in \left(0, \delta\right), \hspace{.3cm}\mbox{ for any } 0<\delta<\frac{\pi}{2a_0},$$
showing the claim.

{\bf Claim 2:} With the exception of, at most, one $u\in\widehat{\Omega}$, there exists $\epsilon=\epsilon(u)$ such that
$$\left\langle \rme^{sA(u)}v'(u), R v'(u)\right\rangle\neq 0, \hspace{.5cm}\forall s\in (0, \epsilon).$$

Let us assume that $u_0\in\widehat{\Omega}$ does not satisfy the previous condition. In this case, 
\begin{equation}
\label{orthogonalvector}
\exists (s_n)_{n\in\mathbb{N}}\subset\R^+, \hspace{.4cm} \mbox{ with } \hspace{.4cm} s_n\rightarrow 0 \hspace{.4cm}\mbox{ and }\hspace{.4cm}
\left\langle \rme^{s_nA(u_0)}v'(u_0), R v'(u_0)\right\rangle=0.
\end{equation}

Hence,
$$
    0=\lim_{n\rightarrow\infty}\frac{1}{s_n}\left\langle \rme^{s_nA(u_0)}v'(u_0)-v'(u_0), R v'(u_0)\right\rangle=\left.\frac{d}{ds}\right|_{s=0}\left\langle \rme^{sA(u_0)}v'(u_0), R v'(u_0)\right\rangle=\left\langle A(u_0)v'(u_0), R v'(u_0)\right\rangle.
$$

In particular, 
$$A(u_0)v'(u_0)=\lambda v'(u_0),\hspace{.5cm}\mbox{ for some }\hspace{.5cm}\lambda\neq 0.$$

 Since $v'(u_0)=-A(u_0)^{-1}(\eta-\theta v(u_0))$, the previous gives us that

\begin{equation*}
    \eta-\theta v(u_0)=\lambda A(u_0)^{-1}(\eta-\theta v(u_0))\hspace{.5cm}\iff\hspace{.5cm}  A(u_0)(\eta-\theta v(u_0))=\lambda (\eta-\theta v(u_0)).
\end{equation*}

Remembering that $A(u_0)=A-u_0\theta$ and $A(u_0)v(u_0)=-u_0\eta$, allows us to obtain 
\begin{equation*}
    A\eta=A\eta-u_0\theta\eta+u_0\theta\eta=\lambda\eta-\lambda\theta v(u_0),
\end{equation*}

and, by applying $A(u_0)$ to the previous equality, we get that
\begin{equation*}
    A(u_0)A\eta=A(u_0)(\lambda\eta-\lambda\theta v(u_0))\hspace{.5cm}\stackrel{[A, A(u_0)]=0}{\iff}\hspace{.5cm} AA(u_0)\eta=\lambda A\eta - \lambda u_0\theta\eta + \lambda u_0\theta\eta=\lambda A\eta,
\end{equation*}
or, equivalently, 
\begin{equation*}
    A(A(u_0)\eta - \lambda \eta)=0.
\end{equation*}

Since we are assuming that $\det A\neq 0$, we get 

\begin{equation}\label{au}
    A(u_0)\eta=\lambda\eta \hspace{.5cm}\mbox{ or equivalently }\hspace{.5cm}\langle A(u_0)\eta,R\eta\rangle=0.
\end{equation}

Now, by the definition of $A(u)$, if $u_{1}\neq u_{2}$ satisfy (\ref{au}) then
$$\langle A\eta,R\eta\rangle=u_{1}\langle\theta\eta,R\eta\rangle\hspace{.5cm}\mbox{ and }\hspace{.5cm}\langle A\eta,R\eta\rangle=u_{2}\langle\theta\eta,R\eta\rangle,$$

and hence 
\begin{equation*}
    0=\langle A\eta,R\eta\rangle-\langle A\eta,R\eta\rangle=\underbrace{(u_1-u_2)}_{\neq 0}\langle\theta\eta,R\eta\rangle\hspace{.5cm},
\end{equation*}
implying that
\begin{equation*}
    \langle A\eta,R\eta\rangle=\langle\theta\eta,R\eta\rangle=0.
\end{equation*}

As a consequence, if (\ref{orthogonalvector}) holds for two different elements in $\Omega$, then necessarily $\eta$ is a common eigenvector of $A$ and $\theta$. From (\ref{LARC}) we conclude that $\Sigma_{\R^2}$ cannot satisfy the LARC which is a contradiction. Therefore, (\ref{orthogonalvector}) holds for, at most, one $u\in\widehat{\Omega}$, showing the claim.

According to claims 1. and 2. for all $u\in\widehat{\Omega}$, with the exception of at most one, there exists $\epsilon^* = \min(\delta, \epsilon)$, such that 
\begin{equation*}
    \det\left(I_{\R^2}-\rme^{sA(u)}\right)\left\langle \rme^{sA(u)}v'(u), R v'(u)\right\rangle\neq 0\hspace{.5cm}\forall s\in (0,\epsilon^*),
\end{equation*}
implying that the differential of $f$ at the point $(u, u)\in\widehat{\Omega}$ is surjective and concluding the proof.
	
\end{proof}

\begin{remark}
\label{scalar}
From the proof of the previous proposition, if the differential of the map $f$ at $(u_0, u_0)$ is not surjective, then $\langle A(u_0)\eta, R\eta\rangle=0$ or, equivalently, $\eta$ is an eigenvector of $A(u_0)$. 

Therefore, if $\Sigma_{S\setminus G(\theta)}$ satisfies the LARC, the previous imply that $A(u^*)$ is necessarily a scalar matrix, that is, 
    $$\langle A(u_0)\eta, R\eta\rangle=0\hspace{.5cm}\iff\hspace{.5cm} A(u_0)=\lambda(u_0)I_{\R^2}, \hspace{.5cm}\mbox{ with }\hspace{.5cm}\lambda(u_0)\neq 0.$$
\end{remark}

Next, we show the existence of control sets with a nonempty interior for the system $\Sigma_{\R^2}$ containing each connected component of $\widehat{\Omega}$ in its closure.

\begin{proposition}\label{controlaffine}

For any connected interval $I\subset\widehat{\Omega}$, there exists a control set $\CC_I$ of $\Sigma_{\R^2}$ such that $v(I)\subset \overline{\CC_I}$. Moreover, with the exception of, at most, one $u\in I$ it holds that $v(u)\in\inner \CC_I$.

\end{proposition}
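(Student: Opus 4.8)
The plan is to combine Proposition~\ref{opensigma2} with a connectedness argument over the curve $v(I)$. First I would fix a connected interval $I\subset\widehat{\Omega}$ and recall that, by Proposition~\ref{opensigma2}, there is at most one exceptional parameter $u^*\in\widehat{\Omega}$; every other $u\in I$ has $\OC^+(v(u))$ and $\OC^-(v(u))$ open. For such a non-exceptional $u$, one has $v(u)\in\inner\OC^+(v(u))\cap\inner\OC^-(v(u))$, so (under the LARC, by the equivalences recalled after Definition~\ref{controlset}) the point $v(u)$ lies in the interior of some control set $D_u$, with $D_u=\overline{\OC^+(v(u))}\cap\OC^-(v(u))$. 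The goal is to show all these $D_u$, as $u$ ranges over $I\setminus\{u^*\}$, coincide with a single control set $\CC_I$, and that this $\CC_I$ also contains $v(u^*)$ in its closure (and, if $u^*$ happens to be non-exceptional, in its interior).

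The key step is a local reachability statement: for $u_1,u_2\in I$ close enough, the equilibria $v(u_1)$ and $v(u_2)$ are mutually approximately reachable, hence lie in the same control set. This should follow from the surjectivity of the differential of the map $f$ constructed in the proof of Proposition~\ref{opensigma2}: varying $u_1$ near $u_2$ along the curve $v$ and using that $f(\widehat{\Omega}^2)\subset\OC^+(v(u))$ shows $v(u_2)\in\inner\OC^+(v(u_1))$ for $u_1$ in a neighborhood of $u_2$ in $I$, with the symmetric statement for negative orbits. Consequently the relation ``$v(u_1)$ and $v(u_2)$ lie in a common control set'' is locally constant in $u$ on $I\setminus\{u^*\}$; since removing a single point from an interval leaves at most two connected pieces, I need to bridge across $u^*$ as well. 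For that I would use continuity of $u\mapsto v(u)$ together with the fact that approximate reachability is a closed-type condition (Remark~\ref{function}-style arguments, or directly: $v(u^*)$ is a limit of points $v(u)$ with $u\to u^*$, all in $\overline{\CC_I}$, which is closed, so $v(u^*)\in\overline{\CC_I}$; and $\CC_I\subset\overline{\OC^+(v(u^*))}$ follows because approximate controllability from $v(u^*)$ can be obtained by first flowing slightly to reach a nearby non-exceptional equilibrium). This yields a single control set $\CC_I$ with $v(I)\subset\overline{\CC_I}$ and $v(u)\in\inner\CC_I$ for all $u\in I$ except possibly $u^*$.

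The main obstacle I anticipate is precisely the behavior at the exceptional parameter $u^*$: there the differential of $f$ degenerates, so I cannot directly conclude $v(u^*)\in\inner\OC^\pm(v(u^*))$, and I must instead argue that $v(u^*)$ is still approximately reachable from and to the rest of $\CC_I$. The cleanest route is to show $\OC^+(v(u^*))$ still accumulates at non-exceptional equilibria $v(u)$ for $u$ near $u^*$ — i.e. for small $s>0$ and $u$ close to $u^*$, $\phi(s,v(u^*),u)$ is close to $v(u^*)$ and lies in $\OC^+(v(u^*))$, and by Proposition~\ref{opensigma2} these nearby points can be pushed into $\inner\OC^+(v(u))\cap\inner\OC^-(v(u))$, linking $v(u^*)$ into $\CC_I$ from the positive side; the negative side is symmetric. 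Then maximality of control sets forces $D_u=\CC_I$ for all non-exceptional $u\in I$, and $v(u^*)\in\overline{\OC^+(v(u))}\cap\overline{\OC^-(v(u))}$ for a non-exceptional $u$, which places $v(u^*)$ in $\overline{\CC_I}$. I would also remark that if $I$ contains no exceptional parameter at all, then $v(I)\subset\inner\CC_I$ outright. The rest is bookkeeping with Proposition~\ref{conjugation} and the standard characterization $D=\overline{\OC^+(x)}\cap\OC^-(x)$.
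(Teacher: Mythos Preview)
Your overall architecture matches the paper's: use Proposition~\ref{opensigma2} to get, for each non-exceptional $u\in I$, a control set $D_u=\overline{\OC^+(v(u))}\cap\OC^-(v(u))$ with $v(u)\in\inner D_u$; then argue by connectedness that $D_u$ is constant on each of the (at most two) components of $I\setminus\{u^*\}$; finally bridge across $u^*$ and pick up $v(u^*)\in\overline{\CC_I}$ by continuity. The first two steps are fine---your ``locally constant'' argument is equivalent to the paper's boundary-crossing argument (if $v(u_2)\notin\inner D_{u_1}$, the arc $v([u_1,u_2])$ meets $\partial D_{u_1}$ at some $v(\bar u)$, but $v(\bar u)\in\inner D_{\bar u}$ forces $D_{\bar u}=D_{u_1}$, a contradiction).

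The genuine gap is the bridge across $u^*$. Your proposed argument---flow a little from $v(u^*)$, land near $v(u)$, use openness of $\OC^\pm(v(u))$---does not show that the two control sets $D^-$ (for $u<u^*$) and $D^+$ (for $u>u^*$) coincide; it only shows $v(u^*)\in\overline{D^-}\cap\overline{D^+}$, and closures of distinct control sets can meet. The subtle point is that the open neighborhoods $\inner\OC^\pm(v(u))\ni v(u)$ may shrink as $u\to u^*$, so you cannot conclude that $v(u^*)$, or any point on the opposite side, falls into them. (Your invocation of Remark~\ref{function} is also off: that remark produces obstructions to common control sets, not links between them.)

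The paper closes this gap with a concrete dynamical fact you did not use: by Remark~\ref{scalar}, the exceptional parameter forces $A(u^*)=\lambda(u^*)I_{\R^2}$ with $\lambda(u^*)\neq 0$. Hence for $u_1<u^*<u_2$ close enough to $u^*$, both $A(u_1)$ and $A(u_2)$ have eigenvalues whose real parts share the sign of $\lambda(u^*)$. The constant-control flows are then uniformly contracting (or expanding), so
\[
\phi(s,v(u_1),u_2)\to v(u_2)\quad\text{and}\quad \phi(s,v(u_2),u_1)\to v(u_1)\qquad\text{as }\lambda(u^*)\,s\to -\infty.
\]
Since $v(u_i)\in\inner D_{u_i}$, these trajectories enter $\inner D_{u_2}$ from $v(u_1)$ and $\inner D_{u_1}$ from $v(u_2)$, giving a closed chain through $\inner D^-$ and $\inner D^+$ and hence $D^-=D^+$. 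Once that equality is in hand, $v(u^*)\in\overline{\CC_I}$ follows by continuity exactly as you say, and the ``at most one exception'' clause is immediate.
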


\begin{proof} By Proposition \ref{opensigma2}, with the exception of at most one $u_0\in\widehat{\Omega}$, the sets $\OC^+(v(u))$ and $\OC^-(v(u))$ are open. As a consequence, 
$$\CC_u:=\overline{\OC^+(v(u))}\cap\OC^-(v(u))$$
is a control set of $\Sigma_{S\setminus G(\theta)}$ satisfying $v(u)\in\inner\CC_u.$ Let us denote by $u_0\in\widehat{\Omega}$ the point where Proposition \ref{opensigma2} fails and define the sets 
$$I^-:=\{u\in I; \;u<u_0\}\hspace{.5cm}\mbox{ and }\hspace{.5cm}I^+:=\{u\in I; \;u>u_0\}.$$

Since $I$ is a connect interval, the sets $I^{\pm}_0$ are also connect intervals, and hence their images $v(I^{\pm})$ are connected subsets of $\R^2$. Moreover, the fact that $v(u)\in\inner\CC_u$ for all $u\in I^{\pm}$ implies that 
\begin{equation}
    \label{equality}
    \CC_{u_1}=\CC_{u_2}\hspace{.5cm}\mbox{ for all }\hspace{.5cm}u_1, u_2\in I^{\pm}
\end{equation}

In fact, if $u_1, u_2\in I^+$ with $u_1<u_2$ satisfies 
$$v(u_2)\in\R^2\setminus\inner\CC_{u_1}\hspace{.5cm}\mbox{ then }\hspace{.5cm}v([u_1, u_2])\cap\partial \CC_{u_1}\neq\emptyset.$$
Therefore, there exists $\bar{u}\in I^+$ such that 
$v(\bar{u})\in\partial\CC_{u_1}$. 
On the other hand, 
$$\bar{u}\in I^+\hspace{.5cm}\implies\hspace{.5cm}v(\bar{u})\in\inner\CC_{\bar{u}}\hspace{.5cm}\implies\hspace{.5cm} \CC_{\bar{u}}\cap \CC_{u_1}\neq\emptyset\hspace{.5cm}\implies\hspace{.5cm} \CC_{\bar{u}}=\CC_{u_1}\hspace{.5cm}\implies\hspace{.5cm} v(\bar{u})\in\inner \CC_{u_1},$$
which is a contradiction. Therefore, relation (\ref{equality}) holds for $I^+$ and, analogously, for $I^-$. 

As a consequence, if $u_0\notin I$ we have nothing more to prove. If $u_0\in I$, to conclude the proof, we have to show that 
$$\exists u_1\in I^+, u_2\in I^-; \hspace{.5cm}\CC_{u_1}=\CC_{u_2}.$$

By Remark \ref{scalar}, it holds that $A(u_0)=\lambda(u_0)I_2$. In particular, the fact that $\lambda(u_0)^2=\det A(u_0)\neq 0$, implies the existence of $u_1<u_0<u_2$ such that $A(u_1)$ and $A(u_2)$ has a pair of eigenvalues with real parts of the same sign as $\lambda(u_0)$. Therefore, 
$$\phi(s, v(u_1), u_2)\rightarrow v(u_2)\hspace{.5cm}\mbox{ and }\hspace{.5cm} \phi(s, v(u_2), u_1)\rightarrow v(u_1),\hspace{.5cm} \lambda(u_0)s\rightarrow-\infty.$$
Since $v(u_i)\in\inner\CC_{u_i}$, $i=1, 2$, the previous allows us to construct a periodic chain passing through $\CC_{u_1}$ and $\CC_{u_2}$ which implies $\CC_{u_1}=\CC_{u_2}$ and concludes the proof.
\end{proof}

\begin{remark}
   As showed in \cite[Proposition 3.8]{DSAyAOR}, the equilibrium $v(u_0)$ associated with the control $u_0\in\widehat{\Omega}$, where Proposition \ref{opensigma2} fails, can be in the boundary of the control set $\CC_{I}$.
\end{remark}

The next result states some topological properties of the control sets $\CC_I$.

\begin{proposition}
\label{propertiesR2}
Let $I\subset\widehat{\Omega}$ be a connected interval, and assume that $\det A(u)>0$ for all $u\in I$. Then, for the control set $\CC_I$, it holds that:
\begin{itemize}
    \item[1.] $\tr A(u)>0$ for all $u\in I$ and $\CC_I$ is open;
    \item[2.] $\tr A(u)<0$ for all $u\in I$ and $\CC_I$ is closed;
     \item[3.] $\tr A(u)=0$ for some $u\in I$ and $\CC_I=\R^2$;
   
\end{itemize}

\end{proposition}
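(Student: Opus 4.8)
The plan is to control the system by finding, for the three cases separately, a suitable observable along which one direction of the orbits is monotone (so as to apply Remark \ref{function}), thereby forcing $\CC_I$ to be open, closed, or all of $\R^2$. The starting point in each case is the identity $\phi(s,v,u)=\rme^{sA(u)}(v-v(u))+v(u)$ for constant controls, together with the concatenation formula (\ref{formula}): for a piecewise-constant control $\mathbf u$ with switching times $0=s_0<s_1<\cdots<s_n$ and values $u_j$, one has $\phi(s,v,\mathbf u)=\rme^{\sum s_i A(u_i)}v+\phi(s,0,\mathbf u)$. The hypothesis $\det A(u)>0$ for all $u\in I$ means $A(u)$ has either two real eigenvalues of the same sign or a complex-conjugate pair, and $\tr A(u)$ is a continuous function of $u$ on the connected interval $I$; thus the three alternatives in the statement are genuinely exhaustive and mutually exclusive once one knows that $\tr A$ cannot change sign without vanishing.

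For case 1 ($\tr A(u)>0$ on $I$), I would argue that each $\rme^{sA(u)}$, $s>0$, is an expanding-type linear map in the sense that $\det\rme^{sA(u)}=\rme^{s\tr A(u)}>1$; more usefully, since both eigenvalues of $A(u)$ have positive real part, $\|\rme^{-sA(u)}\|\to 0$ as $s\to\infty$. Pick $v(u_1)\in\inner\CC_I$ with $u_1\in I$ not the exceptional control (Proposition \ref{controlaffine}). I want to show $\CC_I$ is open, equivalently that every point of $\CC_I$ is an interior point; by Proposition \ref{conjugation}-type reasoning it suffices to show $v(u_1)\in\inner\OC^-(v(u_1))$, i.e. that the negative orbit is open, since we already know $v(u_1)\in\inner\OC^+(v(u_1))$. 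For the negative orbit: given any $w$ near $v(u_1)$, flowing backward under the constant control $u_1$ contracts toward $v(u_1)$ — precisely, $\phi(-s,w,u_1)=\rme^{-sA(u_1)}(w-v(u_1))+v(u_1)\to v(u_1)$ — so $\OC^-(v(u_1))$ is a neighborhood of $v(u_1)$ because it contains $\phi(-s,\cdot,u_1)$-images of a whole $\OC^+$-neighborhood. Combined with $\CC_I=\overline{\OC^+(v(u_1))}\cap\OC^-(v(u_1))$ and the fact that interior points of a control set are dense in it, one upgrades to $\CC_I=\OC^-(v(u_1))\cap\inner\OC^+(v(u_1))$ being open. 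Case 2 is dual: reverse time, noting that reversing time in $\Sigma_{\R^2}$ replaces $A(u)$ by $-A(u)$ and swaps $\OC^+\leftrightarrow\OC^-$, so $\tr A(u)<0$ is case 1 for the time-reversed system, and "open" for the reversed system translates to "closed" for the original (since $\CC_I = \overline{\OC^+}\cap\OC^-$ becomes $\OC^+\cap\overline{\OC^-}$, and matching the two descriptions forces $\CC_I$ to be closed).

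Case 3 is the substantive one. Here $\tr A(u_*)=0$ for some $u_*\in I$ while $\det A(u_*)>0$, so $A(u_*)$ has a pair of purely imaginary eigenvalues $\pm i a_*$, $a_*\neq 0$; on a suitable basis $\rme^{sA(u_*)}$ is the rotation by $sa_*$ centered so that $\phi(\cdot,\cdot,u_*)$ is rotation about $v(u_*)$. The key point is that the entire circle through any point $w$ (the periodic orbit of the constant control $u_*$) then lies in $\overline{\OC^+(w)}\cap\OC^-(w)$, and as $w$ ranges over $\R^2$ these circles sweep out everything; more carefully, one uses that from $v(u_*)$ one can reach points arbitrarily far out — e.g. by a short burst of a control $u$ with $\tr A(u)>0$ near $u_*$ (such $u$ exists in $\widehat\Omega$ since $\tr A$ is continuous and not identically zero on $I$, or using that $\det A(u)>0$ persists) then returning along $u_*$-rotations — and, symmetrically, from any far point one returns to $v(u_*)$, producing periodic chains through $\CC_I$ and any prescribed point, hence $\CC_I=\R^2$. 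I expect the main obstacle to be case 3: making rigorous that the rotational dynamics at $u_*$ together with a nearby non-rotational control actually reaches all of $\R^2$ in both time directions — one must handle the possibility that $u_*$ is the single exceptional control of Proposition \ref{opensigma2} (where $A(u_*)$ is scalar, but a scalar matrix with $\det>0$ forces real nonzero eigenvalues, contradicting $\tr=0$, so in fact $u_*$ is \emph{not} the exceptional control and $v(u_*)\in\inner\CC_I$), and one must ensure the chain construction does not stall. Once $v(u_*)\in\inner\CC_I$ is secured, the argument of the last paragraph of the proof of Proposition \ref{controlaffine} — constructing a periodic chain between $\CC_{u_*}$ and any $\CC_u$ by exploiting that the rotation returns near $v(u_*)$ — should extend to show every equilibrium, and then every point, lies in $\CC_I$.
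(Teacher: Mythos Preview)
Your argument has a genuine gap in case~3. You assume that near $u_*$ there exists $u\in I$ with $\tr A(u)\neq 0$, but since $\tr A(u)=\tr A-u\tr\theta$, the sub-case $\tr A(u)\equiv 0$ on all of $I$ genuinely occurs (precisely when $\tr\theta=\tr A=0$, i.e.\ both $A$ and $\theta$ are rotation-type). In that sub-case \emph{every} constant-control flow is a rotation about its equilibrium $v(u)$; there is no expanding or contracting control anywhere in $I$, so your ``short burst of a control with $\tr A(u)>0$'' is unavailable, and nothing in your proposal replaces it. The paper treats this sub-case by an explicit geometric construction: the equilibria $v(u)$ lie on the line $\R\cdot R\eta$, and from an arbitrary $v_0$ one alternately rides arcs of circles centered at $v(u_1)$ and $v(u_2)$ (with $u_1<u_0<u_2$), each alternation decreasing the distance to the equilibrium segment by $|v(u_2)-v(u_1)|$, until after finitely many steps one lands in $v(I)$, whereupon a final arc reaches $v(u_0)$; the complementary arcs give the return trajectory. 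Something of this sort is unavoidable. (When $\tr\theta\neq 0$ your idea does work and matches the paper: pick non-exceptional $u_1,u_2\in I$ with $\tr A(u_1)\tr A(u_2)<0$, get $\OC^+(v(u_1))=\R^2$ and $\OC^-(v(u_2))=\R^2$, and conclude $\CC_I$ is open and closed.)

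Your cases~1 and~2 are also muddled. That $\OC^{\pm}(v(u))$ are open is already Proposition~\ref{opensigma2}; re-proving this is not the issue. The step you are missing is that one orbit is \emph{all of $\R^2$}: if $\tr A(u)>0$ then for every $w$ the backward flow $\phi(-s,w,u)\to v(u)$, so eventually $\phi(-s_0,w,u)\in\OC^+(v(u))$, hence $w=\phi(s_0,\phi(-s_0,w,u),u)\in\OC^+(v(u))$, giving $\OC^+(v(u))=\R^2$ and therefore $\CC_I=\overline{\R^2}\cap\OC^-(v(u))=\OC^-(v(u))$ open. Case~2 is the mirror image. Your asserted equality $\CC_I=\OC^-(v(u_1))\cap\inner\OC^+(v(u_1))$ is false in general, and the time-reversal duality you invoke for case~2, while morally right, is stated too loosely to constitute a proof (control sets of the time-reversed system are $\overline{\OC^-}\cap\OC^+$, not the same sets as for the original system).
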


\begin{proof} Assume that $A(u)$ has a pair of eigenvalues with negative real parts. Then, for any $v\in\R^2$ we have that 
$$\phi(s, v, u)=\rme^{sA(u)}(v-v(u))+v(u)\rightarrow v(u), \hspace{.5cm}s\rightarrow+\infty.$$
If $\OC^-(v(u))$ is open, there exists $s_0>0$ such that 
$$\phi(s_0, v, u)\in\OC^-(v(u))\hspace{.5cm}\implies\hspace{.5cm}v\in\phi\left(-s_0, \OC^-(v(u)), u\right)\subset \OC^-(v(u)),$$
implying that $\OC^-(v(u))=\R^2$. In same way, if $A(u)$ has a pair of eigenvalues with positive real parts and $\OC^+(v(u))$ is open, then $\OC^+(v(u))=\R^2$. 

Since we are assuming $\det A(u)>0$, if $\tr A(u)\neq 0$ for all $u\in I$, then necessarily $A(u)$ has a pair of eigenvalues with positive real parts if $\tr A(u)>0$ and negative real parts if $\tr A(u)<0$, which by the previous arguments implies items 1. and 2.

On the other hand, since $\tr A(u)=\tr A+u\tr\theta$, if $\tr A(u)=0$ for some $u\in I$, we have the following possibilities:

$\bullet$\;  If $\tr A(u_0)=0$ and $\tr A(u_1)\tr A(u_2)<0$ for any  $u_1, u_2\in\Omega_0$ satisfying $u_1<u_0<u_2$. 

In this case, there exist $u_1, u_2\in I$ where Proposition \ref{opensigma2} holds, such that $A(u_1)$ has a pair of eigenvalues with positive real parts and $A(u_2)$ has a pair of eigenvalues with negative real parts. As a consequence,
$$\R^2=\OC^+(v(u_1))\hspace{.5cm}\mbox{ and }\hspace{.5cm}\OC^-(v(u_2))=\R^2,$$
implying that 
$$\OC^-(v(u_1))=\overline{\OC^+(v(u_1))}\cap\OC^-(v(u_1))=\CC_I=\overline{\OC^+(v(u_2))}\cap\OC^-(v(u_2))=\overline{\OC^+(v(u_2))}.$$
Therefore, $\CC_I$ is open and closed in $\R^2$ which implies $\CC_I=\R^2$.

$\bullet$ \; $\tr A(u)=0$ for all $u\in I$\footnote{The proof for this case is analogous to the one in \cite[Theorem 3.6]{DSAyAOR}. However, for completeness sake, we reproduce it here.};
    
In this case, we have that $A(u)$ has a pair of pure imaginary eigenvalues for all $u\in I$. Moreover, the fact that $A$ and $\theta$ commute with $A(u)$, implies necessarily that 
$$A=\left(\begin{array}{cc}
    0 & -\mu \\
   \mu  & 0
\end{array}\right)\hspace{.5cm}\mbox{ and }\hspace{.5cm}\theta=\left(\begin{array}{cc}
    0 & -1 \\
   1  & 0
\end{array}\right)\hspace{.5cm}\implies\hspace{.5cm} A(u)=\left(\begin{array}{cc}
    0 & -(\mu-u) \\
   \mu-u  & 0
\end{array}\right),$$
with $\mu\neq 0$ and $\mu\notin I$. Therefore, for any $u\in I$, the solutions of $\Sigma_{S\setminus G(\theta)}$ are given by concatenations of the curves
	$$\phi(s, v, u)=R_{s(\mu-u)}(v-v(u))+v(u), \;\;\;\mbox{ where }\;\;\; v(u)=-uA(u)^{-1}\eta=\frac{u}{\mu-u} R\eta,$$
 and $R_{s(\mu-u)}$ stands for the rotation of $s(\mu-u)$-degrees. In particular, 
	$$|\phi(s, v, u)-v(u)|=|R_{s(\mu-u)}(v-v(u))|=|v-v(u)|,$$
  shows that the solution curve $s\mapsto\varphi(s, v, u)$ lies onto the circumference $C_{u, v}$ with radius $|v-v(u)|$ and center $v(u)$.
  
	In order to show the controllability of $\Sigma_{\R^2}$, we will construct a periodic trajectory from an arbitrary point $v_0\in\R^2$ to a point $v(u_0)\in v(I)$ with $u_0\in\inner I$ as follows:
	
	\begin{itemize}
		\item[(a)] Let $u_1, u_2\in I$ with $u_1<u_0<u_2$, which is possible since $u_0\in\inner I$.
		
		\item[(b)] The circumference $C_{u_2, v_0}$ intersects the line $\R\cdot R\eta$ at two points. Let us denote by $v_1$ the point of this intersection that is closer to $v(u_1)$ and consider $s_1>0$ such that $v_1=\phi(s_1, v, u_2)$;
		
		\item[(c)] If $v_1\notin v([u_1, u_2])$, we repeat item (a) with the circumference $C_{u_1, v_1}$, in order to obtain a point 
  $$v_2=\phi(s_2, v_1, u_1)\in C_{u_1, v_1}\cap \R\cdot R\eta.$$ 
		
		\item[(d)] Inductively, if a point $v_n\notin v([u_1, u_2])$ was obtained by the previous process, we define the point $v_{n+1}$ in the intersection of the circumference $C_{u_i, v_n}$ with the line $\R\cdot R\eta$, where $i=1$ if $n$ is even and $i=2$ if $n$ is odd. Moreover, the radius $r_n$ of the circumference $C_{u_i, v_n}$ satisfies 
		$$r_n=|v_n-v(\nu_i)|=|v_0-v(\nu_2)|-n\cdot |v(\nu_2)-v(\nu_1)|.$$
        
		Therefore, for some $N\in\N$ large enough, we obtain that $v_N\in v(I)\subset \R\cdot R\eta$.
		
		\item[(e)] Since $v_N\in v(I)$, the continuity of the curve $u\mapsto v(u)$ assures the existence of $u_N\in I$ satisfying $|v(u_N)-v_N|=|v(u_N)-v(u_0)|.$ As a consequence, the circumference $C_{u_N ,v_N}$ contains $v_N$ and $v(u_0)$, and hence, there exists $s_{N+1}>0$ such that $\varphi(s_{N+1}, v_N, u_N)=v(u_0)$. By concatenation, we obtain a trajectory from $v_0$ to $v(u_0)$.
		
		\item[(f)] By choosing the ``complementary half" of the circumferences $C_{u_i, v_n}$, $n=0, \ldots N-1$ and $C_{u_N, v_N}$, obtained in the previous items, we obtain a trajectory from $v(u_0)$ to $v_0$.
		
	\end{itemize}
	
	The previous steps show how to construct a periodic trajectory passing through $v_0$ and $v(u_0)$. By the arbitrariness of $v_0\in\R^2$ we conclude that $C_I=\R^2$, showing that $\Sigma_{\R^2}$ is controllable and concluding the proof.
\end{proof}	

\subsection{The control sets of LCSs with nilrank two on $G(\theta)$}

In this section, we prove our main results by characterizing the control sets of LCSs on $G(\theta)$ with nilrank two. The idea is basically to use Proposition \ref{conjugation3} and the results in the previous sections. We start with the following result characterizing the control sets of the system $\Sigma_{\R\times\R^2}$.

\begin{proposition}
\label{controlsetsigma1}
Let us consider the control system
 \begin{flalign*}
	  &&\left\{\begin{array}{l}
     \dot{t}=u\alpha \\
     \dot{v}=(A-u\alpha\theta)v+u\eta
\end{array}\right., \; u\in\Omega. &&\hspace{-1cm}\left(\Sigma_{\R\times\R^2}\right)
	  \end{flalign*}
If $\Sigma_{\R\times\R^2}$ satisfies the LARC, then it admits a control set with a nonempty interior $\CC_{\R\times\R^2}$  satisfying
$$\CC_{\R\times\R^2}=\R\times\CC_{\R^2},$$
where $\CC_{\R^2}$ is the control set with nonempty interior of the associated control-affine system 
 \begin{flalign*}
	  &&\dot{v}=A(u)v+\alpha u\eta, \; u\in\Omega.&&\hspace{-1cm}\left(\Sigma_{\R^2}\right)
	  \end{flalign*}
satisfying $0\in\overline{\CC_{\R^2}}$.
    
\end{proposition}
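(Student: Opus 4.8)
\emph{Proof sketch.} The plan is to push Proposition~\ref{controlaffine} forward along the projection $\pi_2\colon\R\times\R^2\to\R^2$, $(t,v)\mapsto v$. Since the $v$-equation of $\Sigma_{\R\times\R^2}$ does not involve $t$, the map $\pi_2$ conjugates $\Sigma_{\R\times\R^2}$ to $\Sigma_{\R^2}$, and, $\pi_2$ being a submersion, the LARC for $\Sigma_{\R\times\R^2}$ (in particular $\alpha\neq0$) entails the LARC for $\Sigma_{\R^2}$. Let $I_0\subset\widehat{\Omega}$ be the connected component containing $0$ (recall $0\in\widehat{\Omega}$, as $\det A\neq0$ and $0\in\inner\Omega$); it is an open interval having $0$ in its interior. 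By Proposition~\ref{controlaffine}, $\CC_{\R^2}:=\CC_{I_0}$ is a control set of $\Sigma_{\R^2}$ with nonempty interior, $v(I_0)\subset\overline{\CC_{\R^2}}$ --- hence $0=v(0)\in\overline{\CC_{\R^2}}$ --- and $v(u)\in\inner\CC_{\R^2}$ for every $u\in I_0$ with at most one exception. Choose $u_1>0$ and $u_1'<0$ in $I_0$, both non-exceptional, so that $v_1:=v(u_1)$ and $v_1':=v(u_1')$ belong to $\inner\CC_{\R^2}$; this is possible because $I_0$ is an open interval around $0$ and at most one control is exceptional. Note that $v_1$ and $v_1'$ are equilibria of the $v$-equation of $\Sigma_{\R\times\R^2}$ for the constant controls $u_1$ and $u_1'$.

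The crux is the following \emph{parking} property of $\Sigma_{\R\times\R^2}$: from any point $(t,v_1)$ one can reach, in positive time, the point $(\tau,v_1)$ for \emph{every} $\tau\in\R$. Indeed, under the constant control $u\equiv u_i$ the trajectory stays at the equilibrium $v_i$ while its first coordinate drifts by $s\,u_i\alpha$; concatenating a dwell at $v_1$ of length $\sigma\geq0$, an exact steering $v_1\to v_1'$ inside $\inner\CC_{\R^2}$, a dwell at $v_1'$ of length $\sigma'\geq0$, and an exact steering $v_1'\to v_1$, one returns to $v_1$ having changed the $t$-coordinate by $c+\sigma u_1\alpha+\sigma' u_1'\alpha$ with $c$ fixed and $\sigma,\sigma'\geq0$ free. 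Since $\alpha\neq0$ and $u_1,u_1'$ have opposite signs, the set $\{\sigma u_1\alpha+\sigma' u_1'\alpha:\sigma,\sigma'\geq0\}$ is all of $\R$, which proves the claim. (Exact steerings between $v_1$ and $v_1'$ exist because both lie in $\inner\CC_{\R^2}$, where controllability of $\Sigma_{\R^2}$ holds.) This parking step is the one I expect to be the main obstacle: modifying the $t$-coordinate seemingly costs a perturbation of $v$, and the remedy is precisely to ride the flow at the two equilibria $v(u_1)$ and $v(u_1')$, which push the $t$-coordinate in opposite directions.

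Granting this, I claim that $x_1:=(0,v_1)$ is locally controllable for $\Sigma_{\R\times\R^2}$. As $v_1\in\inner\CC_{\R^2}$, there is a neighborhood $V$ of $v_1$ in $\R^2$ with $V\subset\OC^+(v_1)\cap\OC^-(v_1)$ for $\Sigma_{\R^2}$. Parking first at $v_1$ to land at any prescribed $t$-value and then steering $v_1\to v'$ with $v'\in V$ shows $\R\times V\subset\OC^+(x_1)$ for $\Sigma_{\R\times\R^2}$; symmetrically, $\R\times V'\subset\OC^-(x_1)$ for a neighborhood $V'$ of $v_1$. Hence, writing $V'':=V\cap V'$,
$$\R\times V''\subset\OC^+(x_1)\cap\OC^-(x_1),$$
so $x_1\in\inner\OC^+(x_1)\cap\inner\OC^-(x_1)$. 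Consequently $x_1$ lies in the interior of a control set $D$ of $\Sigma_{\R\times\R^2}$, and since $\R\times V''$ is an open subset of $\OC^+(x_1)\cap\OC^-(x_1)\subset D$, it is contained in $\inner D$; in particular the entire fiber $\pi_2^{-1}(v_1)=\R\times\{v_1\}$ lies in $\inner D$.

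Finally, Proposition~\ref{conjugation}(2), applied to the $\pi_2$-conjugate pair $\Sigma_{\R\times\R^2}$, $\Sigma_{\R^2}$ with $E=\CC_{\R^2}$ and $y_0=v_1\in\inner\CC_{\R^2}$, gives $D=\pi_2^{-1}(\CC_{\R^2})=\R\times\CC_{\R^2}$. This set $\CC_{\R\times\R^2}:=D$ is a control set with nonempty interior $\R\times\inner\CC_{\R^2}$, and $\CC_{\R^2}$ satisfies $0\in\overline{\CC_{\R^2}}$, which is exactly the claim.
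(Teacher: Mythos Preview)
Your proof is correct and shares the paper's core idea: pick equilibria $v(u_1),v(u_1')$ with $u_1>0>u_1'$ in $\inner\CC_{\R^2}$ and dwell there to move the $t$-coordinate in either direction while returning to the same $v$. The only difference is packaging --- the paper verifies the three control-set axioms for $\R\times\CC_{\R^2}$ directly (weak invariance, approximate controllability on $\R\times\inner\CC_{\R^2}$, maximality via $\pi_2$), whereas you show a single fiber $\R\times\{v_1\}$ lies in the interior of some control set $D$ and then invoke Proposition~\ref{conjugation}(2); both arguments rest on exactly the same parking construction.
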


\begin{proof}
Let us first notice that the solutions of system $\Sigma_{\R\times\R^2}$ satisfy 
$$\phi(s, (t_0,v_0), {\bf u})=(\phi_1(s, t_0, {\bf u}), \phi_2(s, v_0, {\bf u})),\hspace{.5cm}\forall s\in\R, (t_0, v_0)\in\R\times\R^2, {\bf u}\in\UC,$$
and hence, they are given by  concatenations of the solutions
$$\phi(s, (t_0,v_0) ,u)=(t_0+ u\alpha s, \rme^{sA(u)}(v_0-v(u))+v(u)),  .$$

Let us assume, w.l.o.g., that $\alpha=1$, otherwise, we change the control range $\Omega$ by $\alpha\Omega$. Let us show that $\CC_{\R\times\R^2}$ satisfies the three conditions in Definition \ref{controlset}.

\begin{itemize}
    \item[(i)] {\it Weak invariance:} Let $(t_0,v_0)$ be a point in $\CC_{\R\times\R^2}$. Then, $v_0\in\CC_{\R^2}$ and there exist ${\bf u}\in\UC$ such that $$\phi_2(\R^+, v_0, {\bf u})\hspace{.5cm}\implies\hspace{.5cm}\phi(\R^+, v_0, {\bf u})\subset\R\times\CC_{\R^2}=\CC_{\R\times\R^2}.$$
    \item[(ii)] {\it Approximate controllability:} Let us start by showing that exact controllability holds in $\R\times\inner\CC_{\R^2}$. Let then $(t_1,v_1), (t_2,v_2)\in\R\times\inner\CC_{\R^2}$ and consider $v(u_1), v(u_2)\in\inner\CC_{\R^2}$ with $u_1<0<u_2$.

    Since controllability holds in $\inner\CC_{\R^2}$, there exists $t'_1, t'_2\in\R$, ${\bf u}_1, {\bf u}_2\in\UC$ and $s_1, s_2>0$ such that 
$$\phi(s_1, (t_1, v_1), {\bf u}_1)=(t'_1, v(u_1))\hspace{.5cm}\mbox{ and }\hspace{.5cm} \phi(s_2, (t'_2, v(u_2)), {\bf u}_2)=(t_2, v_2).$$
    Analogously, there exists $s_3>0$ and ${\bf u}_3\in\UC$ such that 
    $$\phi(s_3, (t_1', v(u_1)), {\bf u}_3)=(t''_2, v(u_2)).$$

    $\bullet$ If $t''_2\leq t_2'$ we have that 
    $$s_4=\frac{t_2'-t_2''}{u_2\alpha}\geq 0\hspace{.5cm}\mbox{ and }\hspace{.5cm}\phi(s_4, (t_2'', v(u_2), u_2)=(t_2''+u_2\alpha s_4, v(u_2))=(t_2', v(u_2)).$$
    Hence, by concatenation
    $$\phi(s_2, \phi(s_4, \phi(s_3, \phi(s_1, (t_1, v_1), {\bf u}_1), {\bf u}_3), u_2), {\bf u}_2)=(t_2, v_2).$$
    
    $\bullet$ If $t''_2> t_2'$ we have that 
    $$s'_4=\frac{t_2'-t_2''}{\alpha u_1}>0\hspace{.5cm}\mbox{ and }\hspace{.5cm}\phi(s'_4, (t_1', v(u_1), u_1)=(t_1'+u_1\alpha s'_4, v(u_1))=(t_1'+t_2'-t_2'', v(u_1)),$$
    implying that 
    $$\phi(s_3, \phi(s'_4, (t_1', v(u_1), u_1), {\bf u}_3)=\phi(s_3, (t_1'+t_2'-t_2'', v(u_1)), {\bf u}_3)$$
    $$=(t_2'-t_2'', 0)+\phi(s_3, (t_1', v(u_1)), {\bf u}_3)=(t_2'-t_2'', 0)+(t_2'', v(u_2))=(t_2', v(u_2)).$$
    Therefore, by concatenation, 
    $$\phi(s_2, \phi(s_3, \phi(s'_4, \phi(s_1, (t_1, v_1), {\bf u}_1), u_1), {\bf u}_3), {\bf u}_2)=(t_2, v_2),$$
    showing that controllability holds inside $\R\times\inner\CC_{\R^2}$ (see Figure \ref{figura4}).

\begin{figure}[!h]
	\centering
	\includegraphics[scale=.8]{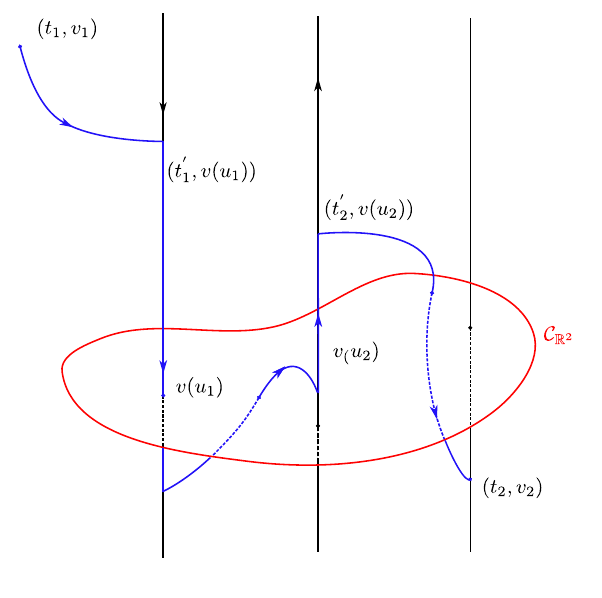}
	\caption{Trajectory connecting points in the interior of $\R\times\CC_{\R^2}$.}
	\label{figura4}
\end{figure}

    As a consequence, we get that 
    $$\forall (t, v)\in\R\times\inner\CC_{\R^2}, \hspace{.5cm}\R\times\inner\CC_{\R^2}\subset\OC^+(t, v)\hspace{.5cm}\implies\hspace{.5cm}\R\times\CC_{\R^2}\subset\overline{\OC^+(t, v)}.$$
    On the other hand, if $v\in\CC_{\R^2}$ and $v_0\in\inner\CC_{\R^2}$ we have that 
    $$v\in\OC_2^{-}(v_0)\hspace{.5cm}\iff\hspace{.5cm}v_0\in\OC_2^+(v).$$
    Therefore, for any $(t, v)\in\R\times\CC_{\R^2}$ there exists $(t_0, v_0)\in\R\times\CC_{\R^2}$ such that $(t_0, v_0)\in\OC^+(t, v)$ implying by the previous that 
    $$\R\times\CC_{\R^2}\subset \overline{\OC^+(t_0, v_0)}\subset\overline{\OC^+(t, v)},$$
    showing that $\R\times\CC_{\R^2}$ satisfies condition 2. in the Definition \ref{controlset}.

     \item {\it Maximality:} If $D\subset\R\times\R^2$ is a control set such that $\R\times\CC_{\R^2}\subset D$, then $\pi_2(D)$ is contained in a control set of $\Sigma_{\R^2}$. Since $\CC_{\R^2}\subset\pi_2(D)$ we have by maximality that $\CC_{\R^2}=\pi_2(D)$ and hence $$D\subset\pi_2^{-1}(\pi_2(D))=\pi_2^{-1}(\CC_{\R^2})=\R\times\CC_{\R^2},$$
     which shows that $\CC_{\R\times\R^2}=\R\times\R^2$ is a control set of $\Sigma_{\R\times\R^2}$.
\end{itemize}  

\end{proof}

\begin{remark}
    Let us note that Proposition \ref{conjugation} cannot be used here since we do not know, a priori, that a control set for the system $\Sigma_{\R\times\R^2}$ exists. 
\end{remark}

Next, we state and prove our main result for linear control systems with nilrank two.

\begin{theorem}
\label{mains}
    Any linear control system  $\Sigma_{G(\theta)}$ on $G(\theta)$ with nilrank two satisfying the LARC admits a unique control set given by 
    $$\CC_{G(\theta)}=\pi^{-1}(\CC_{G^0\setminus G(\theta)}), \hspace{.5cm}\mbox{ where }\hspace{.5cm} \pi: G(\theta)\rightarrow G^0\setminus G(\theta),$$
    is the canonical projection, $G^0$ is the set of singularities of $\XC$ and $\CC_{G^0\setminus G(\theta)}$ the control set of the induced system $\Sigma_{G^0\setminus G(\theta)}$ on $G^0\setminus G(\theta)$ containing the origin in its closure. Moreover, if $\XC=(A, \xi)$ is the associate linear vector field,
    $$\widehat{\Omega}=\{u\in\Omega; \;\;\det (A+\alpha u\theta)\neq 0\},$$
    and $I_0\subset\widehat{\Omega}$ the connected component of $0$
    we get that:
\begin{itemize}
    \item[1.] If $\det A>0$ and $\tr(A-\alpha 
 u\theta)>0$ for all $u\in I_0$, then $\CC_{G(\theta)}$ is open;  
    \item[2.] If $\det A>0$ and $\tr (A-\alpha 
 u\theta)<0$ for all $u\in I_0$, then $\CC_{G(\theta)}$ is closed;
    \item[3.] If $\det A>0$ and $\tr (A-\alpha 
 u\theta)=0$ for some $u\in I_0$, then $\CC_{G(\theta)}=G(\theta)$.
\end{itemize}
    
\end{theorem}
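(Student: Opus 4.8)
The plan is to leverage the three conjugation steps already built up in Section 2 together with the analysis of $\Sigma_{\R^2}$ done in Section 3.1, so that the theorem becomes a transport-of-structure argument rather than a fresh computation. By Proposition~\ref{conjugation2} (item 2, which applies since nilrank two means $\det A\neq 0$) I would first reduce to a system with linear part $\XC=(A,0)$; then Proposition~\ref{conjugation3} gives a diffeomorphism $\psi:G(\theta)\to\R\times\R^2$ conjugating $\Sigma_{G(\theta)}$ to $\Sigma_{\R\times\R^2}$, and simultaneously identifies the induced system on $G^0\setminus G(\theta)=S\setminus G(\theta)$ with the planar system $\Sigma_{\R^2}$ via $\widetilde\psi\circ\pi=\pi_2\circ\psi$. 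Since the LARC is preserved under conjugation, $\Sigma_{\R\times\R^2}$ satisfies the LARC, so Proposition~\ref{controlsetsigma1} applies and gives a control set $\CC_{\R\times\R^2}=\R\times\CC_{\R^2}$ with $\CC_{\R^2}$ the control set of $\Sigma_{\R^2}$ with $0\in\overline{\CC_{\R^2}}$. Pulling back by $\psi$ yields a control set $\CC_{G(\theta)}=\psi^{-1}(\R\times\CC_{\R^2})$ of $\Sigma_{G(\theta)}$.

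Next I would identify this pulled-back control set with $\pi^{-1}(\CC_{G^0\setminus G(\theta)})$. The key point is that $\psi^{-1}(\R\times\CC_{\R^2})=\psi^{-1}\bigl(\pi_2^{-1}(\CC_{\R^2})\bigr)=(\pi_2\circ\psi)^{-1}(\CC_{\R^2})=(\widetilde\psi\circ\pi)^{-1}(\CC_{\R^2})=\pi^{-1}\bigl(\widetilde\psi^{-1}(\CC_{\R^2})\bigr)$, and $\widetilde\psi^{-1}(\CC_{\R^2})$ is exactly the control set $\CC_{G^0\setminus G(\theta)}$ of the induced system containing the origin in its closure, because $\widetilde\psi$ conjugates $\Sigma_{G^0\setminus G(\theta)}$ to $\Sigma_{\R^2}$ and conjugations carry control sets to control sets (and $\widetilde\psi$ sends the origin to the origin). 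For uniqueness, I would invoke Proposition~\ref{conjugation} together with Proposition~\ref{controlsetsigma1}: any control set $D$ of $\Sigma_{G(\theta)}$ projects into a control set of $\Sigma_{\R\times\R^2}$, which must be $\R\times\CC_{\R^2}$ since $\Sigma_{\R\times\R^2}$ has a unique control set with nonempty interior; combined with the fact that any control set of $\Sigma_{G(\theta)}$ here has nonempty interior (from the LARC plus local openness of orbits at the equilibria, Proposition~\ref{opensigma2}), this forces $D=\CC_{G(\theta)}$. Alternatively one can appeal directly to the maximality argument already used in the proof of Proposition~\ref{controlsetsigma1}, transported through $\psi$.

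For the three topological alternatives, I would simply note that $\CC_{G(\theta)}=\psi^{-1}(\R\times\CC_{\R^2})$ with $\psi$ a diffeomorphism, so $\CC_{G(\theta)}$ is open (resp.\ closed, resp.\ all of $G(\theta)$) exactly when $\R\times\CC_{\R^2}$ is, which in turn happens exactly when $\CC_{\R^2}$ is open (resp.\ closed, resp.\ $\R^2$). Now $\CC_{\R^2}=\CC_{I_0}$ in the notation of Proposition~\ref{controlaffine}, where $I_0$ is the connected component of $0$ in $\widehat\Omega$, and the matrix governing $\Sigma_{\R^2}$ is $A(u)=A-\alpha u\theta$ (after absorbing $\alpha$; note $\det(A-\alpha u\theta)=\det(A+\alpha u\theta)$ up to the sign convention fixed in the statement, and one checks $\det A(u)>0$ on $I_0$ since $0\in I_0$ and $\det A>0$). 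Then Proposition~\ref{propertiesR2} gives precisely: $\tr A(u)>0$ on $I_0\Rightarrow\CC_{\R^2}$ open; $\tr A(u)<0$ on $I_0\Rightarrow\CC_{\R^2}$ closed; $\tr A(u)=0$ for some $u\in I_0\Rightarrow\CC_{\R^2}=\R^2$. Transporting back through $\psi$ yields items 1, 2, 3.

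The main obstacle I anticipate is bookkeeping rather than conceptual: one must be careful that the reduction in Proposition~\ref{conjugation2} does not disturb either the nilrank or the LARC (both are invariant, so this is fine) and, more delicately, that the ``control set containing the origin in its closure'' is well-defined and stable under all the identifications — i.e.\ that $\widetilde\psi$ really does send the distinguished control set $\CC_{G^0\setminus G(\theta)}$ to $\CC_{\R^2}$ and not to some other control set of $\Sigma_{\R^2}$. This hinges on $\widetilde\psi$ fixing the origin together with Proposition~\ref{controlsetsigma1}'s assertion that $\CC_{\R^2}$ is the (unique) control set of $\Sigma_{\R^2}$ with $0$ in its closure; since the equilibrium $v(0)=0$ lies in $\overline{\CC_{\R^2}}$ (as $0\in I_0\subset\widehat\Omega$ and $v(0)=0$), this pins down the correspondence. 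A secondary point requiring care is reconciling the two slightly different normalizations of the planar system appearing in the excerpt ($\dot v=(A-u\theta)v+u\eta$ versus $\dot v=(A-u\alpha\theta)v+u\eta$), which is handled by the rescaling $\Omega\mapsto\alpha\Omega$ already indicated in the proof of Proposition~\ref{controlsetsigma1}.
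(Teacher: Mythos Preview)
Your treatment of existence, of the identification $\CC_{G(\theta)}=\pi^{-1}(\CC_{G^0\setminus G(\theta)})$ via the chain $(\pi_2\circ\psi)^{-1}=(\widetilde\psi\circ\pi)^{-1}$, and of the three topological alternatives via Proposition~\ref{propertiesR2}, matches the paper's proof essentially line for line.

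The uniqueness argument, however, has a genuine gap. Proposition~\ref{controlsetsigma1} only asserts the \emph{existence} of the control set $\CC_{\R\times\R^2}=\R\times\CC_{\R^2}$; its maximality clause shows that $\R\times\CC_{\R^2}$ is a control set, not that it is the only one with nonempty interior. So when you write ``which must be $\R\times\CC_{\R^2}$ since $\Sigma_{\R\times\R^2}$ has a unique control set with nonempty interior'', you are assuming exactly what must be proved. Your auxiliary claim that ``any control set of $\Sigma_{G(\theta)}$ here has nonempty interior'' is also not justified by Proposition~\ref{opensigma2}: that result concerns orbits at the specific equilibria $v(u)$, not at arbitrary points of an arbitrary control set.

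The paper's proof does substantial work precisely here, splitting into two cases. If $e\in\inner\CC_{G(\theta)}$, Theorem~\ref{subgrupos} together with solvability of $G(\theta)$ gives uniqueness directly. If $e\notin\inner\CC_{G(\theta)}$, then by Remark~\ref{scalar} the control $u=0$ is the (unique) exceptional point of Proposition~\ref{opensigma2}, forcing $A=\lambda I_{\R^2}$. Given a second control set $D$ with nonempty interior, the paper picks a periodic point $(t,v)\in\inner D$ and uses formula~(\ref{formula}) to compute $\sum s_iA(u_i)=\tau A$ along the periodic orbit (since $\sum s_iu_i=0$); iterating and letting $n\to\infty$ shows $\phi_2(n\tau,0,{\bf u})\to v\in\pi_2(\inner D)$, so a trajectory starting near $0\in\overline{\CC_{\R^2}}$ enters $\pi_2(\inner D)$. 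Conversely, using a nearby $u$ with $A(u)$ stable, trajectories from $v$ return to $\inner\CC_{\R^2}$. This two-way connection forces $\pi_2(D)\subset\CC_{\R^2}$, hence $D\subset\R\times\CC_{\R^2}=\CC_{\R\times\R^2}$. This periodic-orbit argument is the missing ingredient in your proposal and cannot be replaced by an appeal to Proposition~\ref{conjugation} or to the maximality clause of Proposition~\ref{controlsetsigma1}.
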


\begin{proof} Since $\Sigma_{G(\theta)}$ has nilrank two, we get that $\det A\neq0$, and hence, Proposition \ref{conjugation3} implies the existence of a diffeomorphism $\psi:G(\theta)\rightarrow \R\times\R^2$, fixing the identity element $e=(0,0)$, and conjugating  $\Sigma_{G(\theta)}$ to the control-affine system 
\begin{flalign*}
	  &&\left\{\begin{array}{l}
     \dot{t}=u\alpha \\
     \dot{v}=(A-u\alpha\theta)v+u\eta
\end{array}\right., \; u\in\Omega. &&\hspace{-1cm}\left(\Sigma_{\R\times\R^2}\right)
\end{flalign*}
By the previous proposition $\Sigma_{\R\times\R^2}$ admits a control set $\CC_{\R\times\R^2}$ with nonempty interior and containing $v(\Omega_0)$, implying that  $$\CC_{G(\theta)}:=\psi^{-1}(\CC_{\R\times\R^2}),$$
is a control set of $\Sigma_{G(\theta)}$ containing the identity element in its closure. Moreover, by Proposition \ref{controlsetsigma1}, it holds that $\CC_{\R\times\R^2}=\R\times\CC_{\R^2}=\pi_2^{-1}(\CC_{\R^2}),$
implying that
$$\CC_{G(\theta)}=\psi^{-1}(\CC_{\R\times\R^2})=\psi^{-1}(\pi_2^{-1}(\CC_{\R^2}))=(\pi_2\circ\psi)^{-1}(\CC_{\R^2})=(\widetilde{\psi}\circ\pi)^{-1}(\CC_{\R^2})=\pi^{-1}(\widetilde{\psi}^{-1}(\CC_{\R^2}))=\pi^{-1}(\CC_{G^0\setminus G(\theta)}),$$
where $\widetilde{\psi}:G^0\setminus G(\theta)\rightarrow \R^2$ is the diffeomorphism that conjugates $\Sigma_{G^0\setminus G(\theta)}$ and the control-affine system $\Sigma_{\R^2}$ given by Proposition \ref{conjugation3}. 

The properties 1., 2. and 3. follows from Proposition \ref{propertiesR2}, and the previous relations. Therefore, it only remains to show the uniqueness of $\CC_{G(\theta)}$. 

If $e\in\inner\CC_{G(\theta)}$, Theorem \ref{subgrupos} together with the solubility of $G(\theta)$ imply the uniqueness of $\CC_{G(\theta)}$. 
On the other hand, if $e\notin\inner C_{G(\theta)}$ we have, by conjugation, that $0\in \widehat{\Omega}$ is the only point where Proposition \ref{opensigma2} fails. In particular, we must have that 
$\langle A\eta, R\eta\rangle=0$ and hence $A=\lambda I_{\R^2}$, for some $\lambda\neq 0$ (see Remark \ref{scalar}). If $D\subset\R\times\R^2$ is a control set with a nonempty interior of $\Sigma_{\R\times\R^2}$ for any $(t, v)\in\inner D$ there exists $S>0$ and ${\bf u}\in\UC$ satisfying 
$$\phi(S, (t, v), {\bf u})=(t, v)\hspace{.5cm}\mbox{ where }\hspace{.5cm} S=\sum_{j=1}^n s_j \hspace{.5cm}\mbox{ with }\hspace{.5cm}0=s_0<s_1<\ldots<s_n,$$
and ${\bf u}_{|[s_j, s_{j+1})}=u_j\in\Omega$ for $j=0, \ldots, n-1.$
Therefore, 
$$\sum_{i=1}^n s_i u_i=0\hspace{.5cm}\mbox{ and, by equation (\ref{formula}) }\hspace{.5cm} \phi_2(S,v,\textbf{u})=\rme^{\sum_{i=1}^{n}s_iA(u_i)}v+\phi_2(s, 0, {\bf u})=v.$$
Moreover, 
$$\sum_{i=1}^{n}s_iA(u_i)=\sum_{i=1}^{n}s_i(A-u_i\theta)=\tau A-\underbrace{\left(\sum_{i=1}^n s_i u_i\right)}_{=0}\theta=\tau A,$$
implies that 
$$\phi_2(\tau,v,\textbf{u})=\rme^{\tau A}v+\phi_2(\tau, 0, {\bf u})=v.$$

Let us assume, w.l.o.g., that $\lambda<0$, since the other possibility is analogous. In this case, the periodicity of $(t, v)$ implies that
$$v=\phi_2(n\tau, v, {\bf u})=\rme^{n\tau\lambda}v+\phi_2(n\tau, 0, {\bf u}),$$
and then
$$\rme^{n\tau \lambda}\rightarrow 0\hspace{.5cm}\implies\hspace{.5cm}\phi_2(n\tau, 0, {\bf u})\rightarrow v\in\pi_2(\inner D).$$
Since $0\in\overline{\CC_{\R^2}}$, there exists by continuity $v_1\in\inner\CC_{\R^2}$ such that $\phi_2(n\tau, v_1, {\bf u})\in \pi_2(\inner D)$. On the other hand, for any $u\in\widehat{\Omega}$ sufficiently close to $0$, we have that $A(u)$ still has a pair of eigenvalues with negative real parts and $v(u)\in\inner \CC_{\R^2}$. Hence, 
$$\phi_2(s, v, u)\rightarrow v(u), \hspace{.5cm}s\rightarrow+\infty\hspace{.5cm}\implies \hspace{.5cm}\exists s_0>0, \hspace{.5cm}\phi_2(s_0, v, u)\in \inner\CC_{\R^2}.$$
Since, by Proposition \ref{conjugation}), $\pi_2(\inner D)$ has to be contained in the interior of a control set $D'\subset\R^2$ of $\Sigma_{\R^2}$, the previous imply (by exact controllability in $\inner D'$) the existence of an orbit starting and finishing in $\inner\CC_{\R^2}$ and passing by $\inner D'$, forcing the equality  $D'=\CC_{\R^2}$. Therefore, 
$$D\subset\pi_2^{-1}\pi_2(D)\subset \pi_2^{-1}(\CC_{\R^2})=\R\times \CC_{\R^2}=\CC_{\R\times\R^2},$$
showing the uniqueness of $\CC_{\R\times\R^2}$ and hence, $\CC_{G(\theta)}$ is the unique control set with a nonempty interior of $\Sigma_{G(\theta)}$, concluding the proof.
\end{proof}

\section{LCSs with nilrank smaller than two on $G(\theta)$}

In this section, the cases where the nilrank of a LCS is equal to zero or one are analyzed. As we will see, for nilrank one control sets, the LARC is enough to assure the existence of control sets with a nonempty interior. On the other hand, the dynamics of LCSs with nilrank zero strongly depend on the group structure.

\subsection{Nilrank one LCSs}

We start by proving an equivalence between the LARC and the ad-rank condition for a LCS with nilrank one.

\begin{lemma}
For a LCS with nilrank one, the LARC is equivalent to the ad-rank condition.
\end{lemma}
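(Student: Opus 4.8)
The plan is to exploit the geometric reformulations recorded right after equations (\ref{LARC}) and (\ref{adrank}). Writing $w:=\alpha\xi+A\eta$, the LARC for $\Sigma_{G(\theta)}$ is equivalent to ``$\alpha\neq 0$ and $w$ is not a common eigenvector of $A$ and $\theta$'', while the ad-rank condition is equivalent to ``$\alpha\neq 0$ and $w$ is not an eigenvector of $A$''. Since every bracket appearing in the ad-rank condition already appears in the LARC, the implication ad-rank $\Rightarrow$ LARC is automatic and needs no assumption on the nilrank: if $\alpha\langle Aw,Rw\rangle\neq 0$ then $\alpha\neq 0$ and $\langle Aw,Rw\rangle^2>0$, so the left-hand side of (\ref{LARC}) is nonzero. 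Hence the whole content of the lemma is the converse, and the hypothesis $\operatorname{rank}A=1$ (which is exactly what nilrank one means for these systems) enters only there.

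I would prove the converse by contradiction. Assume the LARC holds, so that $\alpha\neq 0$ and, in particular, $w\neq 0$ (if $w=0$ the left-hand side of (\ref{LARC}) vanishes), and suppose in addition that $Aw=\lambda w$ for some $\lambda\in\R$. The structural facts to use are $A\theta=\theta A$ and the fact that, on $\R^2$, a matrix of rank one has both $\operatorname{Im}A$ and $\ker A$ of dimension one. Commutation gives $\theta(\operatorname{Im}A)=\theta A(\R^2)=A\theta(\R^2)\subseteq\operatorname{Im}A$, and for $v\in\ker A$ one has $A(\theta v)=\theta(Av)=0$, so both lines $\operatorname{Im}A$ and $\ker A$ are $\theta$-invariant. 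Now split into two cases: if $\lambda\neq 0$, then $w=\lambda^{-1}Aw\in\operatorname{Im}A$, and being nonzero it spans the line $\operatorname{Im}A$, so $\theta w\in\operatorname{Im}A=\R w$; if $\lambda=0$, then $w\in\ker A$, a line, so $\theta w\in\ker A=\R w$. In either case $w$ is also an eigenvector of $\theta$, hence a common eigenvector of $A$ and $\theta$, contradicting the LARC. Therefore $w$ is not an eigenvector of $A$, which together with $\alpha\neq 0$ is precisely the ad-rank condition.

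I do not anticipate a genuine obstacle: the argument reduces to the elementary observation that in dimension two a rank-one matrix pins down two $1$-dimensional subspaces, its image and its kernel, and that any matrix commuting with $A$ must preserve each of them. The only point requiring care is the eigenvalue-zero case, where one must invoke the $\theta$-invariance of $\ker A$ rather than of $\operatorname{Im}A$; treating $\lambda=0$ and $\lambda\neq 0$ uniformly in this way is what keeps the proof to a few lines.
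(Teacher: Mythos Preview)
Your proof is correct and follows essentially the same route as the paper: both argue by contradiction, split according to whether the eigenvalue of $w$ under $A$ is zero or nonzero, and use $[A,\theta]=0$ to conclude that the corresponding one-dimensional subspace ($\ker A$ or the nonzero eigenspace) is $\theta$-invariant. The only cosmetic difference is that the paper first invokes Proposition~\ref{conjugation2} to normalize $\eta=0$ (so that $w=\alpha\xi$) before running the argument, whereas you work directly with $w=\alpha\xi+A\eta$ and phrase the $\lambda\neq 0$ case via $\operatorname{Im}A$ rather than the eigenspace; this is arguably cleaner but not substantively different.
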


    \begin{proof}
    Up to isomorphism, we can consider $\Sigma_{G(\theta)}$ to be a LCS on $G(\theta)$ satisfying the LARC and such that $\XC=(A, \xi)$ and $Y=(\alpha, 0)$, with $\alpha\neq 0$. Hence, by equations (\ref{LARC}) and (\ref{adrank}),  
    $$\mbox{LARC is equivalent to } \langle A\xi, R\xi\rangle^2+\langle \theta\xi, R\xi\rangle^2\neq 0,$$
    and 
    $$\mbox{ad-rank is equivalent to } \langle A\xi, R\xi\rangle\neq 0.$$
    However, the assumption that $\Sigma_{G(\theta)}$ has nilrank one implies necessarily that $\dim\ker A=1$. As a consequence, we have the following possibilities:
    \begin{enumerate}
        \item $\xi\in\ker A$ which by the commutativity of $A$ and $\theta$ imply that $\langle \theta\xi, R\xi\rangle=0$ and hence LARC holds only if the ad-rank condition holds;

        \item $\xi\notin\ker A$ and hence $\R\xi$ is a one-dimensional eigenspace of $A$ and $\R^2=\R\xi\oplus\ker A$. Consequently, 
        $$\langle A\xi, R\xi\rangle=0\hspace{.5cm}\implies\hspace{.5cm}\langle \theta\xi, R\xi\rangle=0,$$ 
        showing again that the LARC implies the ad-rank condition.
    \end{enumerate}

    In both cases, the LARC implies the ad-rank condition, proving the result.
    \end{proof}

\bigskip

As a consequence of the previous result and Theorem \ref{subgrupos}, any LCS with nilrank one admits a unique control set $\CC_{G(\theta)}$ with nonempty interior, satisfying $G^0\subset \inner\CC_{G(\theta)}$. 

On the other hand, the assumption that the nilrank of $\Sigma_{G(\theta)}$ is equal to one implies that $H:=\{0\}\times\ker A\subset G(\theta)$ is a closed, normal, and one-dimensional subgroup. Consequently, we have a well-induced linear control system on the 2D Lie group $H \setminus G(\theta)$ and the following holds:

\begin{theorem}
    Any LCS with nilrank one satisfying the LARC admits a unique control set with a nonempty interior $\CC_{G(\theta)}$ satisfying 
    $$\CC_{G(\theta)}=\pi^{-1}(\CC_{H\setminus G(\theta)})= \CC_{H\setminus G(\theta)}\times \ker A,$$ 
        where $\pi: G(\theta)\rightarrow H\setminus G(\theta)$ is the canonical projection and $\XC=(A, \xi)$. 
    Moreover, $\CC_{G(\theta)}$ is open if $\tr A>0$ and closed if $\tr A<0$, or $\tr A=0$ and $\CC_{G(\theta)}=G(\theta)$.
\end{theorem}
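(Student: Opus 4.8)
The plan is to push the whole problem down to the two–dimensional quotient $H\setminus G(\theta)$, where $H=\{0\}\times\ker A$, and then read off the conclusion from the two–dimensional theory.

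First I would collect the ingredients. By the preceding Lemma the LARC is equivalent to the ad-rank condition, so $\OC^+(e)$ is a neighbourhood of $e$, and Theorem \ref{subgrupos} together with the solvability of $G(\theta)$ already yields the unique control set $\CC_{G(\theta)}$ with nonempty interior and $G^0\subset\inner\CC_{G(\theta)}$. Next I record three facts about $H=\R(0,w)$ (with $\ker A=\R w$): it is closed, one–dimensional and normal, normality because $[A,\theta]=0$ forces the line $\ker A$ to be $\theta$-invariant, i.e. $w$ is an eigenvector of $\theta$; it is fixed pointwise by the flow of $\XC$, since $\varphi_s(0,w)=(0,\rme^{sA}w+\Lambda_0^{\theta}\Lambda_s^A\xi)=(0,w)$ because $Aw=0$; and $H\subset G^0$, because $Aw=0$ gives $\DC(0,w)=0$, so $(0,w)\in\fg^0$. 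By the first two facts and Section 2.3 there is an induced LCS $\Sigma_{H\setminus G(\theta)}$ on the two–dimensional solvable Lie group $H\setminus G(\theta)$, $\pi$-conjugate to $\Sigma_{G(\theta)}$ through the canonical projection $\pi$. Since $\pi$ is open and trajectories of $\Sigma_{G(\theta)}$ project to trajectories of $\Sigma_{H\setminus G(\theta)}$, from $e\in\inner\OC^+(e)\cap\inner\OC^-(e)$ one gets $\pi(e)\in\inner\OC^+(\pi(e))\cap\inner\OC^-(\pi(e))$; hence (again by Theorem \ref{subgrupos} and solvability) $\Sigma_{H\setminus G(\theta)}$ has a unique control set $\CC_{H\setminus G(\theta)}$ with nonempty interior, with $\pi(e)\in\inner\CC_{H\setminus G(\theta)}$.

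I would then identify $\CC_{G(\theta)}$ with $\pi^{-1}(\CC_{H\setminus G(\theta)})$ via Proposition \ref{conjugation}. Part $1$, applied to the $\pi$-conjugate systems, gives a control set $E$ of $\Sigma_{H\setminus G(\theta)}$ with $\pi(\CC_{G(\theta)})\subset E$; since $e\in\inner\CC_{G(\theta)}$ we have $\pi(e)\in E$, and as $\pi(e)\in\inner\CC_{H\setminus G(\theta)}$ and distinct control sets are disjoint, $E=\CC_{H\setminus G(\theta)}$. For Part $2$ I take $y_0=\pi(e)\in\inner\CC_{H\setminus G(\theta)}$; then $\pi^{-1}(y_0)=H\subset G^0\subset\inner\CC_{G(\theta)}$, so Proposition \ref{conjugation} yields $\CC_{G(\theta)}=\pi^{-1}(\CC_{H\setminus G(\theta)})$. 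The factorization $\pi^{-1}(\CC_{H\setminus G(\theta)})=\CC_{H\setminus G(\theta)}\times\ker A$ is then exactly the explicit form of $\pi$ computed in Section \ref{section3D}: in the coordinates on $G(\theta)\cong\R^3$ associated with the basis $\{(1,0),(0,Rw),(0,w)\}$, the map $\pi$ is the projection onto the first two coordinates and $\ker A$ is the third axis. Since $\pi$ is an open continuous surjection it is a quotient map, so $\CC_{G(\theta)}=\pi^{-1}(\CC_{H\setminus G(\theta)})$ is open, resp. closed, resp. all of $G(\theta)$, precisely when $\CC_{H\setminus G(\theta)}$ has the corresponding property. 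For the two–dimensional system I would, after conjugating by Proposition \ref{conjugation2} so that $Y=(\alpha,0)$ (the ad-rank condition then gives $\xi\notin\ker A$, so $\{(1,0),(0,\xi),(0,w)\}$ is a basis), compute the induced derivation on $\fg(\theta)/\fh$, $\fh=\{0\}\times\ker A$: in the basis $\{\overline{(1,0)},\overline{(0,\xi)}\}$ it has matrix $\left(\begin{array}{cc}0&0\\ 1&\tr A\end{array}\right)$, hence eigenvalues $0$ and $\tr A$ ($H\setminus G(\theta)$ being $\R^2$ or the affine group according as the remaining eigenvalue of $\theta$ vanishes or not). By the description of the control set of a linear control system on a two–dimensional solvable Lie group (see \cite{DSAy2, ADZ}), $\CC_{H\setminus G(\theta)}$ is open if $\tr A>0$ and closed if $\tr A<0$; and if $\tr A=0$ the induced derivation has only the eigenvalue $0$, so all its eigenvalues have zero real part and the controllability clause of Theorem \ref{subgrupos} gives $\CC_{H\setminus G(\theta)}=H\setminus G(\theta)$. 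Pulling these back through $\pi$ finishes the proof.

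The step I expect to cause the most trouble is the application of Proposition \ref{conjugation}: it requires knowing beforehand that both control sets exist with the right points in their interiors — which is where the nilrank-one Lemma (for $\CC_{G(\theta)}$) and the openness of $\pi$ (for $\CC_{H\setminus G(\theta)}$) are essential — and it requires the inclusion $H\subset\inner\CC_{G(\theta)}$, for which the observation $H\subset G^0$ is the key point. The topological trichotomy is then comparatively routine, the only mild care being the bookkeeping of which two-dimensional group $H\setminus G(\theta)$ is.
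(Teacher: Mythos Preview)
Your proof is correct and follows essentially the same route as the paper's own argument: use the nilrank-one lemma plus Theorem~\ref{subgrupos} to produce $\CC_{G(\theta)}$ with $G^0\subset\inner\CC_{G(\theta)}$, observe that $H=\{0\}\times\ker A$ is normal, $\varphi_s$-invariant and contained in $G^0$, apply Proposition~\ref{conjugation} at the fibre over $\pi(e)$ to obtain $\CC_{G(\theta)}=\pi^{-1}(\CC_{H\setminus G(\theta)})$, read off the product form from the explicit description of $\pi$ in Section~\ref{section3D}, and finish by computing the eigenvalues $0,\tr A$ of the induced derivation on the $2$-dimensional quotient. Your treatment is slightly more explicit than the paper's (you verify normality of $H$ via $[A,\theta]=0$, push local controllability down through the open map $\pi$ rather than just asserting LARC downstairs, and write out the matrix of $\hat\DC$), but the logical skeleton is identical.
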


\begin{proof} Since the induced LCS $\Sigma_{H\setminus G(\theta)}$ on $H\setminus G(\theta)$ satisfies the LARC, there exists a unique control set $\CC_{H\setminus G(\theta)}$ that contains the identity element of $H\setminus G(\theta)$ in its interior (Theorem \ref{subgrupos}). Moreover, 
$$\pi^{-1}(eH)=H=\{0\}\times\ker A\subset\inner\CC_{G(\theta)},$$
since $H$ is contained in the set of fixed points of the flow of $\XC=(A, \xi)$. By Proposition \ref{conjugation} we obtain that 
$\CC_{G(\theta)}=\pi^{-1}(\CC_{H\setminus G(\theta)})$ proving the first equality.

On the other hand, if $w\in\ker A$ is a unitary vector and we consider the basis  $\{(1, 0), (0, Rw), (0, w)\}$, the canonical projection is given by
$$\pi:G(\theta)\rightarrow (t, \langle v, Rw\rangle),$$
which coincides with projection onto the first two components of the chosen basis.  As a consequence, the control set $\CC_{H\setminus G(\theta)}$ is naturally identified inside $G(\theta)$ as the subset $\CC_{H\setminus G(\theta)}\times\{0\}$ of the plane generated by $\{(1, 0), (0, Rw)\}$. Since 
$$(0, s_2 w)(t, s_1Rw)=(t, s_2 w+ s_1 Rw)=t(1, 0)+s_1(0, Rw)+s_2(0, w), \hspace{.5cm}\forall t, s_1, s_2\in\R,$$
we get that 
$$\pi^{-1}(\CC_{H\setminus G(\theta)})=H\left(\CC_{H\setminus G(\theta)}\times\{0\}\right)= \CC_{H\setminus G(\theta)}\times \ker A,$$
proving the second equality. Now, since $\pi$ conjugates $\Sigma_{G(\theta)}$ and $\Sigma_{H\setminus G(\theta)}$, it commutes with the associated derivations, that is, 
$$\hat{\DC}\circ\pi=\pi\circ\DC,$$
where $\hat{\DC}$ is the derivation associated with the system $\Sigma_{H\setminus G(\theta)}$. Since 
$$\{0\}\times\ker A\subset\ker\DC\hspace{.5cm}\mbox{ and }\hspace{.5cm}\dim\ker\DC=2,$$
we obtain that $\tr\hat{\DC}=\tr \DC=\tr A$ is the only possible eigenvalue of $\hat{\DC}$, which by Theorem \ref{subgrupos} implies the result.
\end{proof}

\subsection{Nilrank zero LCSs}

In this section, we analyze the LCSs with nilrank zero. As we will see, the LARC is not enough to assure the existence of control sets with nonempty interiors for LCSs in some of the classes of the groups $G(\theta)$. Instead, they admit an infinite number of control sets with empty interior. 

We start with a lemma that will be used in the proof of the results of this section.

\begin{lemma}
\label{positive}
Let us assume that $\theta\neq\left(\begin{array}{cc}
    \gamma & -1 \\
     1 & \gamma
\end{array}\right)$ for all $\gamma\neq 0$. 
If $\langle \theta\xi, R\xi\rangle\neq 0$, there exists $\hat{\xi}\in\R^2$ such that 
    $$g(t):=\langle\Lambda_t^{\theta}\xi, \hat{\xi}\rangle\geq 0, \hspace{1cm}\forall t\in\R.$$
\end{lemma}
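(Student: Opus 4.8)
The claim is that when $\theta$ is not a rotation-type matrix $\left(\begin{smallmatrix}\gamma&-1\\1&\gamma\end{smallmatrix}\right)$ with $\gamma\ne 0$, and $\langle\theta\xi,R\xi\rangle\ne 0$ (so $\xi$ is not an eigenvector of $\theta$), there is a covector $\hat\xi$ making $g(t)=\langle\Lambda_t^\theta\xi,\hat\xi\rangle$ nonnegative for all $t$. Recall $\Lambda_t^\theta\xi=\int_0^t e^{s\theta}\xi\,ds$. The plan is to compute $g(t)$ explicitly in each of the remaining normal forms of $\theta$ (from Section \ref{section3D} the list is: the Jordan block $\left(\begin{smallmatrix}1&1\\0&1\end{smallmatrix}\right)$; the diagonal $\left(\begin{smallmatrix}1&0\\0&\gamma\end{smallmatrix}\right)$ with $|\gamma|\le 1$; and the rotation-type $\left(\begin{smallmatrix}\gamma&-1\\1&\gamma\end{smallmatrix}\right)$ only with $\gamma=0$, i.e.\ $\theta=R^{-1}$, since $\gamma\ne 0$ is excluded by hypothesis), and in each case exhibit $\hat\xi$ by hand.

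\textbf{Reduction and case analysis.} First I would reduce: since we only need the existence of one $\hat\xi$, and both $\Lambda_t^\theta$ and the pairing transform predictably under a linear change of basis that conjugates $\theta$ to its normal form, it suffices to treat each normal form. Write $\xi=(x_1,x_2)$ in the basis adapted to $\theta$. The three cases:

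\emph{Diagonal case} $\theta=\mathrm{diag}(1,\gamma)$. Then $\Lambda_t^\theta\xi=\bigl(x_1(e^t-1),\,x_2\tfrac{e^{\gamma t}-1}{\gamma}\bigr)$ if $\gamma\ne 0$, and the second component is $x_2 t$ if $\gamma=0$. The hypothesis $\langle\theta\xi,R\xi\rangle\ne 0$ forces both $x_1\ne 0$ and $x_2\ne 0$ (if one vanishes, $\xi$ is an eigenvector). Now each coordinate function $t\mapsto e^{\mu t}-1$ (for $\mu\ne 0$) is strictly increasing through $0$, hence changes sign exactly at $t=0$; same for $t\mapsto t$. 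So the two coordinate functions $e^t-1$ and $(e^{\gamma t}-1)/\gamma$ (or $t$) have the \emph{same sign} for every $t$ when $\gamma>0$, and opposite signs when $\gamma<0$. Choosing $\hat\xi=(\operatorname{sgn}(x_1),\,\operatorname{sgn}(\gamma)\operatorname{sgn}(x_2))$ (with the convention $\operatorname{sgn}(0)$ handled by the $\gamma=0$ subcase where we take $\hat\xi=(\operatorname{sgn} x_1,\operatorname{sgn} x_2)$) makes $g(t)$ a sum of two nonnegative terms. I should double-check the degenerate subcase $\gamma=1$ where $\theta$ is scalar — but then $\langle\theta\xi,R\xi\rangle=\langle\xi,R\xi\rangle=0$, contradicting the hypothesis, so it does not occur; similarly $\gamma=-1$ needs a separate look but there too the two components are $x_1(e^t-1)$ and $-x_2(e^{-t}-1)$, both of which vanish only at $t=0$ and have a definite sign pattern, so the same construction works.

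\textbf{The Jordan and the remaining rotation case; expected obstacle.} For $\theta=\left(\begin{smallmatrix}1&1\\0&1\end{smallmatrix}\right)$ one computes $e^{s\theta}=e^s\left(\begin{smallmatrix}1&s\\0&1\end{smallmatrix}\right)$, so $\Lambda_t^\theta\xi$ has second component $x_2(e^t-1)$ and first component $x_1(e^t-1)+x_2\int_0^t s e^s\,ds=x_1(e^t-1)+x_2\bigl((t-1)e^t+1\bigr)$. Here $\langle\theta\xi,R\xi\rangle\ne 0$ forces $x_2\ne 0$. The function $h(t)=(t-1)e^t+1$ satisfies $h(0)=0$, $h'(t)=te^t$, so $h$ is decreasing on $(-\infty,0)$ and increasing on $(0,\infty)$, hence $h(t)\ge 0$ everywhere with equality only at $t=0$; it does \emph{not} change sign. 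Thus taking $\hat\xi=(0,\operatorname{sgn} x_2)$ gives $g(t)=\operatorname{sgn}(x_2)x_2(e^t-1)$ — wait, that does change sign, so instead I should pair against the first coordinate: $\hat\xi=(\operatorname{sgn}(x_2),0)$ is not right either since the first coordinate also has the sign-changing $x_1(e^t-1)$ term. The correct move is to notice that $h(t)\ge |e^t-1|\cdot(\text{something})$? — no. The cleaner route: since $h\ge 0$ with a double zero at $0$ while $e^t-1$ has a simple zero, the combination $x_1(e^t-1)+x_2 h(t)$ need not be signed, but we are free to choose \emph{any} $\hat\xi$, so pick $\hat\xi=(0,1)$ only if $x_2>0$... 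Let me reconsider: actually the honest statement is that I want a linear functional of $\Lambda_t^\theta\xi$ that is nonnegative, and $\Lambda_t^\theta\xi = x_1(e^t-1)\mathbf e_1 + x_2(e^t-1)\mathbf e_2 + x_2 h(t)\mathbf e_1$ where the first two terms are $(e^t-1)\xi$ and the last is $x_2 h(t)\mathbf e_1$. Pairing with $\hat\xi$ orthogonal to $\xi$ kills the $(e^t-1)\xi$ part and leaves $x_2 h(t)\langle\mathbf e_1,\hat\xi\rangle$; since $\langle\theta\xi,R\xi\rangle = x_2^2 \ne$ ... let me just take $\hat\xi = R\xi/|\text{stuff}|$ appropriately normalized so that $\langle\mathbf e_1,\hat\xi\rangle$ has sign opposite to $x_2$, making $g(t)=x_2 h(t)\langle\mathbf e_1,\hat\xi\rangle\ge 0$. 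This is the key trick, and it also dispatches the case $\theta=R^{-1}=\left(\begin{smallmatrix}0&1\\-1&0\end{smallmatrix}\right)$: there $e^{s\theta}$ is a rotation, $\Lambda_t^\theta\xi = (\text{rotation through }t - I)\,\theta^{-1}\xi$, and pairing with a vector orthogonal to $\theta^{-1}\xi$ isolates $1-\cos t\ge 0$. \textbf{The main obstacle} is precisely this bookkeeping in the Jordan case: one must resist pairing against a coordinate axis and instead pair against the line orthogonal to $\xi$ itself (equivalently, against $R\xi$), which annihilates the sign-indefinite scalar multiple of $\xi$ and leaves a manifestly signed term; getting the sign of the constant right, and verifying $\langle\theta\xi,R\xi\rangle\ne 0$ is exactly the nondegeneracy making that leftover term nonzero, is where care is needed. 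The reason the excluded case $\theta=\left(\begin{smallmatrix}\gamma&-1\\1&\gamma\end{smallmatrix}\right)$, $\gamma\ne 0$, genuinely fails is that there $e^{s\theta}$ is a spiral, $\langle\Lambda_t^\theta\xi,\hat\xi\rangle$ oscillates with growing amplitude for every nonzero $\hat\xi$, and no sign can be maintained.
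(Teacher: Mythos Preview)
Your Jordan-block argument is correct and is, up to normalization, exactly the paper's: pair against a scalar multiple of $R\xi$ so as to kill the sign-changing $(e^t-1)\xi$ component and retain $x_2^2\,h(t)\ge 0$. But the other two cases contain genuine errors.

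\textbf{Diagonal case.} Your sign analysis is wrong. Since $(e^{\gamma t}-1)/\gamma=\int_0^t e^{\gamma s}\,ds$, this function has the same sign as $t$ for \emph{every} nonzero $\gamma$; so both coordinate functions of $\Lambda_t^\theta\xi$ vanish simply at $t=0$ and change sign there. With your choice $\hat\xi=(\operatorname{sgn} x_1,\,\operatorname{sgn}\gamma\cdot\operatorname{sgn} x_2)$ and, say, $\gamma>0$, one gets $g(t)=|x_1|(e^t-1)+|x_2|(e^{\gamma t}-1)/\gamma$, which is strictly negative for all $t<0$. The ``sum of two nonnegative terms'' claim is false. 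What is actually needed is a $\hat\xi$ making the zero of $g$ at $t=0$ of order two; the paper takes $\hat\xi=\gamma\xi_1^{-1}\mathbf e_1-\gamma\xi_2^{-1}\mathbf e_2$ (for $0<\gamma<1$), yielding $g(t)=\gamma(e^t-1)-(e^{\gamma t}-1)$ with $g(0)=g'(0)=0$ and $g'(t)=\gamma(e^t-e^{\gamma t})$, whence $g\ge 0$ by the first-derivative test.

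\textbf{Rotation case} ($\theta=\left(\begin{smallmatrix}0&-1\\1&0\end{smallmatrix}\right)$, not $R^{-1}$). Writing $w:=\theta^{-1}\xi$ and $\Lambda_t^\theta\xi=(R_t-I)w$, you propose pairing against a vector \emph{orthogonal} to $w$. But $\langle(R_t-I)w,\,Rw\rangle=\langle R_tw,Rw\rangle=|w|^2\sin t$, which changes sign. To isolate $1-\cos t$ one must pair against $-w$ itself: $\langle(R_t-I)w,\,-w\rangle=|w|^2(1-\cos t)\ge 0$. This is precisely the paper's $\hat\xi=R\xi=-\theta^{-1}\xi$.

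The unifying principle you found in the Jordan case is in fact the right one, but stated correctly it reads: take $\hat\xi$ orthogonal to $\xi$ (not to $\theta^{-1}\xi$). Since $g'(0)=\langle\xi,\hat\xi\rangle$, this forces $g$ to have (at least) a double zero at $0$; the hypothesis $\langle\theta\xi,R\xi\rangle\ne 0$ ensures $g''(0)\ne 0$, so the sign of $\hat\xi$ can be fixed. The paper's $\hat\xi$ is in every case a scalar multiple of $R\xi$, and the remaining work---verifying $g\ge 0$ globally, not just near $0$---is done by a short first-derivative argument in each normal form.
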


\begin{proof}
    The lemma is proved case by case.

    1. $\theta=\left(\begin{array}{cc}
    1 & 0 \\
    0 & \gamma
\end{array}\right)$, $|\gamma|\in (0, 1]$: In this case, $\langle \theta\xi, R\xi\rangle\neq 0$ implies that $\xi=\xi_1 \mathbf{e}_1+\xi_2 \mathbf{e}_2$ with $\xi_1\xi_2\neq 0$.

Let us assume that $\gamma\in (0, 1)$, since the case $\gamma\in (-1, 0)$ is analogous. Consider $\hat{\xi}=\gamma\xi_1^{-1} \mathbf{e}_1-\gamma\xi_2^{-1}\mathbf{e}_2$, we obtain 
$$g(t)=\langle\Lambda_t^{\theta}\xi, \hat{\xi}\rangle=\left\langle (\rme^t-1)\xi_1 \mathbf{e}_1+\frac{1}{\gamma}(\rme^{\gamma t}-1)\mathbf{e}_2, \gamma\xi_1^{-1} \mathbf{e}_1-\gamma\xi_2^{-1}\mathbf{e}_2\right\rangle=\gamma(\rme^t-1)-(\rme^{\gamma t}-1).$$
Derivation implies that $g'(t)=\gamma(\rme^{t}-\rme^{\gamma t}),$ and hence
$$g'(t)>0, \hspace{.5cm}t\in (0, +\infty) \hspace{.5cm}\mbox{ and }\hspace{.5cm}g'(t)<0, \hspace{.5cm}t\in (-\infty, 0),$$
showing that $g$ is strictly decreasing in $(-\infty, 0)$ and strictly increasing in $(0, +\infty)$. Since $g(0)=0$ we obtain that $g(t)\geq 0$ for all $t\in\R$.

\bigskip

2. $\theta=\left(\begin{array}{cc}
    1 & 0 \\
    0 & 0
\end{array}\right)$: In this case, $\langle \theta\xi, R\xi\rangle\neq 0$ implies also that $\xi=\xi_1 \mathbf{e}_1+\xi_2 \mathbf{e}_2$ with $\xi_1\xi_2\neq 0$. Defining $\hat{\xi}= \xi_1^{-1} \mathbf{e}_1-\xi_2^{-1}\mathbf{e}_2$ gives us
$$g(t)=\langle\Lambda_t^{\theta}\xi, \hat{\xi}\rangle=\left\langle (\rme^t-1)\xi_1 \mathbf{e}_1+t \xi_2\mathbf{e}_2, \xi_1^{-1} \mathbf{e}_1-\xi_2^{-1}\mathbf{e}_2\right\rangle=(\rme^t-t-1),$$

As in the case before, deriving the function, we obtain $g'(t)=\rme^t-1$, which satisfies

$$g'(t)>0, \hspace{.5cm} t\in (0, +\infty) \hspace{.5cm}\mbox{ and }\hspace{.5cm}g'(t)<0, \hspace{.5cm}t\in (-\infty, 0),$$

and as in the previous case, $g(t)\geq g(0)=0$ for all $t\in\R$.

\bigskip

3. $\theta=\left(\begin{array}{cc}
    1 & 1 \\
    0 & 1
\end{array}\right)$: In this case, $\langle \theta\xi, R\xi\rangle\neq 0$ implies that $\xi=\xi_1 \mathbf{e}_1+\xi_2 \mathbf{e}_2$ with $\xi_2\neq 0$. By considering

and we should define $\hat{\xi}=\xi_2^{-1}\mathbf{e}_1-\xi_1\xi_2^{-2}\mathbf{e}_2.$ A simple calculation shows us that 
$$g(t)=\langle\Lambda_t^{\theta}\xi, \hat{\xi}\rangle=-\rme^{t}+1+t\rme^t.$$
As in the first case, $g'(t)=te^t$ and hence
$$g'(t)>0, \hspace{.5cm}t\in (0, +\infty) \hspace{.5cm}\mbox{ and }\hspace{.5cm}g'(t)<0, \hspace{.5cm} t\in (-\infty, 0),$$
implying again that  $g(t)\geq g(0)=0$, forall $t\in\R$.

\bigskip

4. $\theta=\left(\begin{array}{cc}
    0 & -1 \\
    1 & 0
\end{array}\right)$: In this case, $\langle \theta\xi, R\xi\rangle\neq 0$ implies $\xi\neq 0$. Defining $\hat{\xi}= -\xi_2\mathbf{e}_1+\xi_1\mathbf{e}_2$ gives us
$$g(t)=\langle\Lambda_t^{\theta}\xi, \hat{\xi}\rangle=\Bigl\langle \left[\xi_2(\cos t-1)+\xi_1\sin t \right]\mathbf{e}_1+\left[\xi_2\sin t-\xi_1(\cos t-1) \right]\mathbf{e}_2,-\xi_2\mathbf{e}_1+\xi_1\mathbf{e}_2\Bigr\rangle$$
$$=-\left(\xi_1^2+\xi_2^2\right)(\cos t-1)=|\xi|^2(1-\cos t)\geq 0,$$
as required.

\end{proof}

Using the previous lemma, we are able to show the following:

\begin{theorem}\label{kerA=2}
        For any LCS $\Sigma_{G(\theta)}$ with nilrank zero satisfying the LARC, it holds:
        \begin{enumerate}
            \item If $\theta\neq\left(\begin{array}{cc}
    \gamma & -1 \\
     1 & \gamma
\end{array}\right)$ for all $\gamma\neq 0$, then $\Sigma_{G(\theta)}$ admits an infinite number of control sets with empty interior;

\item If $\theta=\left(\begin{array}{cc}
    \gamma & -1 \\
     1 & \gamma
\end{array}\right)$ for some $\gamma\neq 0$, then $\Sigma_{G(\theta)}$ is controllable.
        \end{enumerate}
        .
\end{theorem}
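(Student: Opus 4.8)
Since the nilrank is zero, $A=0$ after possibly replacing $\Sigma_{G(\theta)}$ by an equivalent system (by Proposition \ref{conjugation2}(1) we may take $Y=(\alpha,\eta)$ with $\alpha\neq 0$; the associated derivation is $\DC=\begin{pmatrix}0&0\\\xi&0\end{pmatrix}$). The equations of motion $\Sigma_{G(\theta)}$ then read $\dot t=u\alpha$, $\dot v=\Lambda_t^{\theta}\xi+u\rho_t\eta$. Dividing $\Omega$ by $\alpha$ and rescaling we may assume $\alpha=1$; then $t(s)=t_0+\int_0^s u(\tau)d\tau$ is directly controlled, so the $t$-coordinate moves freely, and the $v$-equation is the genuinely constrained one. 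The LARC \eqref{LARC} with $A=0$ forces $\langle\theta\xi,R\xi\rangle\ne 0$ (the bracket $\langle A(\xi+A\eta),R(\xi+A\eta)\rangle$ vanishes identically when $A=0$), so in particular $\xi\ne 0$ and $\xi$ is not an eigenvector of $\theta$. This is exactly the hypothesis of Lemma \ref{positive} in case 1, and the situation $\theta=\begin{pmatrix}\gamma&-1\\1&\gamma\end{pmatrix}$ with $\gamma\neq 0$ is precisely the one \emph{excluded} from that lemma — which is the dichotomy in the statement.

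\textbf{Part 1 (infinitely many control sets with empty interior).} The plan is to produce a nonconstant continuous function whose value strictly increases along every trajectory, and then slice the state space by its level sets. Take $\hat\xi\in\R^2$ from Lemma \ref{positive}, so that $g(t)=\langle\Lambda_t^{\theta}\xi,\hat\xi\rangle\geq 0$ for all $t$, with $g(0)=0$ and (from the proof of the lemma) $g(t)>0$ for $t\ne 0$. Define $F:G(\theta)\to\R$ by $F(t,v)=\langle v,\hat\xi\rangle - h(t)$ for a suitable antiderivative-type correction $h$; more precisely, I want a function of $v$ and $t$ whose derivative along the flow is $\langle\Lambda_t^\theta\xi,\hat\xi\rangle + u\langle\rho_t\eta,\hat\xi\rangle - u\,h'(t)$ and I choose $h$ so the $u$-dependent terms cancel — i.e. $h'(t)=\langle\rho_t\eta,\hat\xi\rangle$, giving $h(t)=\langle\Lambda_t^\theta\eta,\hat\xi\rangle$ (up to a constant). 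Then along any trajectory
$$\frac{d}{ds}F(\phi(s,(t_0,v_0),{\bf u}))=\langle\Lambda_{t(s)}^{\theta}\xi,\hat\xi\rangle=g(t(s))\ge 0,$$
with strict inequality whenever $t(s)\ne 0$. Consequently $F$ is nondecreasing along every solution, and since $t(s)$ can equal $0$ only on a measure-zero set of times unless the control keeps $t\equiv 0$ (impossible to maintain on a time interval while still moving in $v$, as $\dot v=\Lambda_0^\theta\xi=0$ there forces stagnation), $F$ is in fact \emph{strictly} increasing along any trajectory that leaves its starting fibre. By Remark \ref{function}, two points with different $F$-values that are comparable in the wrong direction cannot lie in a common control set; more to the point, a control set is confined to a single level set $\{F=c\}$, which is a $2$-dimensional submanifold, hence has empty interior in $G(\theta)$. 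To see that there are infinitely many such control sets, note that each point $(t,v)$ with $\langle\Lambda_t^\theta\xi,\hat\xi\rangle$-behavior pinned down lies on a periodic orbit of the uncontrolled-in-$v$ recurrent dynamics — concretely, fixed points of $\XC$ (where $A=0$ the singular set is large) supply a continuum of distinct control sets indexed by the value of $F$, or one exhibits explicit equilibria/periodic points in distinct level sets. I would make the "infinitely many" precise by showing each level set $\{F=c\}$ meeting the recurrent set contains at least one control set and that infinitely many such $c$ occur.

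\textbf{Part 2 (controllability when $\theta=\begin{pmatrix}\gamma&-1\\1&\gamma\end{pmatrix}$, $\gamma\ne 0$).} Here $\rho_t=e^{\gamma t}R_t$ (rotation scaled by $e^{\gamma t}$), and $\Lambda_t^\theta\xi=\int_0^t e^{s\theta}\xi\,ds$ spirals outward/inward; crucially no nonzero linear functional of $\Lambda_t^\theta\xi$ stays one-signed, which is what blocks the obstruction of Part 1. The plan is: first, the $t$-coordinate is fully controllable (choose $u$ freely); second, I show the $v$-reachable set from any point, over trajectories returning to a fixed $t$-value, is all of $\R^2$. Fix a target $t$-value, say $t=0$; concatenate two constant-control arcs with $u=u^+$ and $u=u^-$ arranged so the net change in $t$ is zero but the net displacement in $v$ is a prescribed vector. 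Because $\dot v$ picks up both the drift term $\Lambda_t^\theta\xi$ (which, integrated over a loop in $t$, need not vanish and can be steered by \emph{how} the loop is traversed) and the $u\rho_t\eta$ term, and because $\rho_t$ is a genuine rotation-dilation (so $\rho_t\eta$ sweeps all directions as $t$ varies), one can realize any $v$-displacement — this is where the LARC ($\langle\theta\xi,R\xi\rangle\ne0$, equivalently $\xi$ not a $\theta$-eigenvector, which for this $\theta$ just means $\xi\ne 0$) is used, to guarantee the drift contribution is not degenerate. Assembling: from $(t_1,v_1)$ steer $t$ to $0$ while landing at some $v$, then run the loop-in-$t$ maneuvers to move $v$ to any desired value at $t=0$, then steer $t$ to $t_2$, picking up a known $v$-offset which we pre-compensate. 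Running this both ways shows $\OC^+(g_1)=\OC^+(g_2)=G(\theta)$, i.e. the system is controllable and $G(\theta)$ itself is the unique control set.

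\textbf{Main obstacle.} The hard part is Part 2: showing that the loop maneuvers in the $t$-direction actually generate \emph{all} of $\R^2$ in the $v$-coordinate. This requires a careful Lie-bracket / reachable-set argument (or an explicit two- or three-switch construction with an open-mapping / implicit-function step, in the spirit of Proposition \ref{opensigma2}) to certify that the map from (switching times, control values) to net $v$-displacement is a submersion somewhere, using $\gamma\ne0$ and the LARC in an essential way. Part 1 is more routine once Lemma \ref{positive} is in hand: the function $F(t,v)=\langle v-\Lambda_t^\theta\eta,\hat\xi\rangle$ does the job, and the only care needed is the bookkeeping that distinct control sets actually occur in infinitely many level sets, which follows from the abundance of equilibria of $\XC$ when $A=0$.
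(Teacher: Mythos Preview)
Your Part 1 is essentially the paper's argument. The paper first invokes Proposition \ref{conjugation2} to conjugate $\eta$ to $0$, so that the system reads $\dot t=u\alpha$, $\dot v=\Lambda_t^\theta\xi$, and then uses the monotone function $f(t,v)=\langle v,\hat\xi\rangle$. Your correction term $h(t)=\langle\Lambda_t^\theta\eta,\hat\xi\rangle$ simply reproduces that conjugation at the level of the Lyapunov-like function, so the two arguments are equivalent. The conclusion that each control set lies in a single level hyperplane (hence has empty interior) and that the drift's fixed points $(0,v)$ populate all levels is exactly what the paper does.

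For Part 2 your route is genuinely different from the paper's, and the step you flag as the ``main obstacle'' is indeed a gap. The paper does \emph{not} try to show directly that $t$-loops realize every $v$-displacement. Instead it (i) projects onto the $2$D homogeneous space $H\setminus G(\theta)$ with $H=\R\cdot(0,\theta^{-1}\xi)$ and proves the induced system there is controllable by an explicit concatenation; (ii) computes the two-switch solution $\phi(s,\phi(s,\cdot,\rho\alpha^{-1}),-\rho\alpha^{-1})$ in the basis $\{\theta^{-1}\xi,R\theta^{-1}\xi\}$ and exploits the unbounded oscillation of $e^{\gamma\rho s}\sin(\rho s)$ (this is precisely where $\gamma\neq 0$ enters) to show that if $s_0(0,\theta^{-1}\xi)\in\inner\OC^+(e)$ then some $s_1(0,\theta^{-1}\xi)$ with $s_0s_1<0$ is also in $\inner\OC^+(e)$; (iii) bootstraps via the semigroup property to get both half-rays of $H$ inside $\inner\OC^+(e)$; and (iv) multiplies two such points to conclude $e\in\inner\OC^+(e)$, after which controllability follows from Theorem \ref{subgrupos} because $A=0$ forces $G(\theta)=G^0$.

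Your proposed submersion/implicit-function step, even if executed, yields only an \emph{open} set of achievable $v$-displacements, not all of $\R^2$; and at the trivial loop the displacement map has vanishing first derivative, so the submersion must be located away from the origin of parameter space, which means you do not directly get $e\in\inner\OC^+(e)$ either. You would still need a mechanism to pass from ``nonempty interior of the reachable set'' to ``everything'' --- which is exactly what the paper's steps (ii)--(iv) together with Theorem \ref{subgrupos} provide, and what your outline lacks. A direct argument along your lines (pausing at various $t_*$ and using that $\Lambda_{t_*}^\theta\xi$ traces a shifted logarithmic spiral hitting all directions with unbounded magnitude) is plausible, but it is substantially more than an open-mapping step and you have not supplied it.
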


\begin{proof}
    By Proposition \ref{conjugation2}, we can assume, w.l.o.g., that 
    \begin{flalign*}
	  &&\left\{\begin{array}{l}
     \dot{t}=u\alpha\\
     \dot{v}=\Lambda_t^{\theta}\xi
\end{array}\right., u\in\Omega.  &&\hspace{-1cm}\left(\Sigma_{G(\theta)}\right)
\end{flalign*}

1. If $\theta\neq\left(\begin{array}{cc}
    \gamma & -1 \\
     1 & \gamma
\end{array}\right)$ for all $\gamma\neq 0$, there exists by Lemma \ref{positive} a vector $\hat{\xi}\in\R^2$ such that 
$\langle \Lambda_t^{\theta}\xi, \hat{\xi}\rangle\geq 0$ for all $t\in\R$. As a consequence, the smooth function
$$f:G(\theta)\rightarrow\R, \hspace{1cm}f(t, v):=\langle v, \hat{\xi}\rangle,$$
is such that the function $g_{{\bf u}}(s)=f(\phi_2(s, (t, v), {\bf u}))$, satisfies
$$g'_{{\bf u}}(s)=\frac{d}{ds}\langle \phi_2(s, (t, v), {\bf u}), \hat{\xi}\rangle=\langle \Lambda_{\phi_1(s, (t, v), {\bf u})}^{\theta}\xi, \hat{\xi}\rangle\geq 0,$$
and are non-decreasing. Therefore, $$(t_1, v_1), (t_2, v_2)\in G(\theta); \hspace{.5cm} \mbox{ with }\hspace{.5cm}f(t_1, v_1)<f(t_2, v_2),$$
cannot be in the same control set of $\Sigma_{G(\theta)}$ (see Remark \ref{function}). 

On the other hand, the fact that $(0, v)\in G(\theta)$ are fixed points of the drift $\XC=(0, \xi)$, implies that any such point is contained in a control set of $\Sigma_{G(\theta)}$. Therefore, for any $c\in\R$ the plane
$$P_c:=\{(t, v), \hspace{.3cm}\langle v, \hat{\xi}\rangle=c\},$$
contains (at least) one control of $\Sigma_{G(\theta)}$, showing the assertion.

2. Let $H=\R\cdot (0, \theta^{-1}\xi)$ and consider the homogeneous space $H\setminus G(\theta)$. By the calculations in Section 2.5.2, the canonical map is given by 
$$\pi: G(\theta)\rightarrow H\setminus G(\theta), \hspace{1cm} \pi(t, v)=(t, \langle v, R\theta^{-1}\xi\rangle).$$
Therefore, the induced system on $\R^2\simeq H\setminus G(\theta)$ is given by
 \begin{flalign*}
	  &&\left\{\begin{array}{l}
     \dot{t}=u\alpha\\
     \dot{x}=\langle \Lambda_t^{\theta}\xi, R\theta^{-1}\xi\rangle
\end{array}\right. . &&\hspace{-1cm}\left(\Sigma_{H\setminus G(\theta)}\right)
	  \end{flalign*}
Now, the fact that $\theta=\gamma I_{\R^2}+R$ implies that
$\rme^{t\theta}=\rme^{\gamma t}((\cos t) I_{\R^2}+(\sin t) R)$. As a consequence, 
$$\langle\Lambda_t^{\theta}\xi, R\theta^{-1}\xi\rangle=\langle(\rme^{t\theta}-I_{\R^2})\theta^{-1}\xi, R\theta^{-1}\xi\rangle=\langle\rme^{t\theta}\theta^{-1}\xi, R\theta^{-1}\xi\rangle$$
$$=\rme^{\gamma t}\cos t\langle\theta^{-1}\xi, R\theta^{-1}\xi\rangle+\rme^{\gamma t}\sin t\langle R\theta^{-1}\xi, R\theta^{-1}\xi\rangle=|\theta^{-1}\xi|^2\rme^{\gamma t}\sin t.$$

The rest of the proof is concluded in the next three steps.

{\bf Step 1:}  $\Sigma_{H\setminus G(\theta)}$ is controllable;

It is not hard to see that the solutions $\hat{\phi}$ of $\Sigma_{H\setminus G(\theta)}$ satisfy 
$$(i)\;\;\hat{\phi}_1(s, (t_0, x_0), u)=t_0+u\alpha s, \hspace{1cm}(ii)\;\;\hat{\phi}_2(s, (t_0, x_0), 0)=x_0+s|\theta^{-1}\xi|^2\rme^{\gamma t}\sin t, $$
$$\mbox{ and }\hspace{1cm}(iii)\;\;\hat{\phi}_2(s, (t_0, x_0), u)=\hat{\phi}_2(s, (t_0, 0), u)+x_0.$$

In particular, property $(i)$ implies that $$\forall t_1, t_2\in\R, \;\exists u\in\Omega; \hspace{1cm}\hat{\phi}(s, \{t_1\}\times\R, u)=\{t_2\}\times\R.$$

Therefore, it is always possible to construct a trajectory from any given point $(t, x)$ to the axis $\{0\}\times\R$ in positive and negative time. Hence, $\Sigma_{H\setminus G(\theta)}$ is controllable as soon as any two points in $\{0\}\times\R$ can be connected by a trajectory of the system.

Let us consider $x, y\in\R$ and assume w.l.o.g. that $x<y$. Fix $t_1, t_2\in\R$ satisfying 
$$-\pi/2<t_1<0<t_2<\pi/2,$$
and consider $s_{0, 1}, s_{2, 0}, s_{1, 2}\in (0, +\infty)$ and $u_{0, 1}, u_{2, 0}, u_{1, 2}\in\Omega$ such that 
 $$\hat{\phi}(s_{0, 1}, \{0\}\times\R, u_{0, 1})=\{t_1\}\times\R, \hspace{.5cm} \hat{\phi}(s_{2, 0}, \{t_2\}\times\R, u_{2, 0})=\{0\}\times\R\hspace{.5cm}\mbox{ and }\hspace{.5cm}\hat{\phi}(s_{1, 2}, \{t_1\}\times\R, u_{1, 2})=\{t_2\}\times\R.$$
Take $\hat{x}_i, \hat{y}_j$, $i, j=1, 2$ satisfying
$$\hat{\phi}(s_{0, 1}, (0, x), u_{0, 1})=(t_1, \hat{x}_1), \hspace{.5cm}\hat{\phi}(s_{0, 1}, (0, y), u_{0, 1})=(t_1, \hat{y}_1),$$
$$\hat{\phi}(s_{2, 0}, (t_2, \hat{x}_2), u_{2, 0})=(0, x)\hspace{.5cm}\mbox{ and }\hspace{.5cm}\hat{\phi}(s_{2, 0}, (t_2, \hat{y}_2), u_{2, 0})=(0, y).$$
Using property (iii), it is straightforward to show that there exist $z_1, z_2$ satisfying
$$z_1\leq\min\{\hat{x}_1, \hat{y}_1\}\hspace{.5cm}z_2\leq\min\{\hat{x}_2, \hat{y}_2\},\hspace{.5cm}\mbox{ and }\hspace{.5cm}\hat{\phi}_2(s_{1, 2}, (t_1, z_1), u_{1, 2})=z_2.$$
Furthermore, since $\sin t_1<0$ and $\sin t_2>0$ there exist $s_1, s_2, \hat{s}_1, \hat{s}_2\in [0, +\infty)$ such that 
$$\hat{x}_1+s_1|\theta^{-1}\xi|^2\rme^{\gamma t_1}\sin t_1=z_1, \hspace{1cm}\hat{y}_1+\hat{s}_1|\theta^{-1}\xi|^2\rme^{\gamma t_1}\sin t_1=z_1, $$
$$z_2+\hat{s}_2|\theta^{-1}\xi|^2\rme^{\gamma t_2}\sin t_2=\hat{x}_2\hspace{.5cm}\mbox{ and }\hspace{.5cm} z_2+s_2|\theta^{-1}\xi|^2\rme^{\gamma t_2}\sin t_2=\hat{y}_2.$$

Using the previous, a closed trajectory passing through $(0, x_1)$ to $(0, x_2)$ is obtained as follows:

\begin{enumerate}
    \item Go from $(0, x)$ to $(t_1, \hat{x}_1)$ in time $s_{0, 1}$ with the control $u_{0, 1}$;

    \item Go from $(t_1, \hat{x}_1)$ to $(t_1, z_1)$ in time $s_1$ with the control $0$;

    \item Go from $(t_1, z_1)$ to $(t_2, z_2)$ in time $s_{1, 2}$ with the control $u_{1, 2}$;

     \item Go from $(t_2, z_2)$ to $(t_2, \hat{y}_2)$ in time $s_2$ with the control $0$;

    \item Go from $(t_2, \hat{y}_2)$ to $(0, y)$ in time $s_{2, 0}$ with the control $u_{2, 0}$;

    \item Go from $(0, y)$ to $(t_1, \hat{y}_1)$ in time $s_{0, 1}$ with the control $u_{1, 0}$;

    \item Go from $(t_1, \hat{y}_1)$ to $(t_1, z_1)$ in time $\hat{s}_1$ with the control $0$;

    \item Go from $(t_1, z_1)$ to $(t_2, z_2)$ in time $s_{1, 2}$ with the control $u_{1, 2}$;

     \item Go from $(t_2, z_2)$ to $(t_2, \hat{x}_2)$ in time $\hat{s}_2$ with the control $0$;

    \item Go from $(t_2, \hat{x}_2)$ to $(0, x)$ in time $s_{2, 0}$ with the control $u_{2, 0}$;

\end{enumerate}

The previous implies that $\Sigma_{H\setminus G(\theta)}$
is controllable.

\bigskip

{\bf Step 2:} If $s_0(0, \theta^{-1}\xi)\in\inner\OC^{\pm}(e)$, there exists $s_1\in\R$ such that  
$$s_0s_1<0\hspace{.5cm}\mbox{ and }\hspace{.5cm} s_1(0, \theta^{-1}\xi)\in\inner\OC^{\pm}(e).$$

Let us show the claim only for the positive orbit, since the proof for the negative orbit is analogous.

By a straightforward calculation, we obtain that the second coordinate of the solutions of $\Sigma_{G(\theta)}$, for $u\neq 0$, is given by
$$\phi_2(s, (t_0, v_0), u)=\frac{1}{u\alpha}\rme^{t_0\theta}(\rme^{u\alpha s\theta}-u\alpha s\theta-I_{\R^2})\theta^{-2}\xi+s\Lambda_{t_0}^{\theta}\xi+v_0.$$
Let us choose $\rho\in\R$ such that $\pm\rho\alpha^{-1}\in\Omega$. Since $\phi_1(s, (t_0, v_0), u)=t_0+u\alpha s$, we get that
$$\phi(s, \phi(s, s_0(0, \theta^{-1}\xi), \rho\alpha^{-1}), -\rho\alpha^{-1})=(0, \phi_2(s, \phi(s, e, \rho\alpha^{-1}), -\rho\alpha^{-1})\in\{0\}\times\R^2.$$
Therefore, 
$$\phi_2(s, \phi(s, e, \rho\alpha^{-1}), -\rho\alpha^{-1})=\phi_2\left(s, \frac{1}{\rho}(\rme^{\rho s\theta}-\rho s\theta-I_{\R^2})\theta^{-2}\xi+s_0\theta^{-1}\xi, \rho\alpha^{-1}\right)$$
$$=-\frac{1}{\rho}\rme^{\rho s\theta}(\rme^{-\rho s\theta}+\rho s\theta-I_{\R^2})\theta^{-2}\xi+s\Lambda_{\rho s}^{\theta}\xi+\frac{1}{\rho}(\rme^{\rho s\theta}-\rho s\theta-I_{\R^2})\theta^{-2}\xi+s_0\theta^{-1}\xi$$
$$=-\frac{2}{\rho}\left(\theta^{-2}\xi+\rho s\theta^{-1}\xi-\rme^{\rho s\theta}\theta^{-2}\xi\right)+s_0\theta^{-1}\xi.$$
Using that $\rme^{t\theta}=\rme^{\gamma t}((\cos t) I_{\R^2}+(\sin t) R)$ and $(\gamma^2+1)\theta^{-1}=\gamma I_{\R^2}-R$, allows us to write such a solution on the orthogonal basis $\{\theta^{-1}\xi, R\theta^{-1}\xi\}$ as
$$\phi_2(s, \phi(s, e, \rho\alpha^{-1}), -\rho\alpha^{-1})=H_1(s)\cdot\theta^{-1}\xi+H_2(s)\cdot R\theta^{-1}\xi,$$
where
$$H_1(s)=\frac{2}{\rho(\gamma^2+1)}\left[\rme^{\gamma \rho s}(\gamma\cos(\rho s)+\sin(\rho s))-\gamma-s\rho(\gamma^2+1)\right]+s_0,$$
and 
$$H_2(s)=\frac{2}{\rho(\gamma^2+1)}\left[\rme^{\gamma \rho s}(\gamma\sin(\rho s)-\cos(\rho s))+1\right].$$

Also,
$$H_2(s)=0\hspace{.5cm}\iff\hspace{.5cm} \gamma\rme^{\gamma\rho s}\sin(\rho s)+1=\rme^{\gamma\rho s}\cos(\rho s),$$
implying that
$$H_1(s)=\frac{2}{\rho}\rme^{\gamma\rho s}\sin(\rho s)-2s+s_0, \hspace{.5cm}\mbox{ if }\hspace{.5cm} H_2(s)=0.$$
On the other hand, for any $k\in\Z$, we get that
$$H_2\left(\frac{2k\pi}{\rho}\right)=\frac{2}{\rho(\gamma^2+1)}(-\rme^{\gamma 2k\pi}+1)\hspace{.5cm}\mbox{ and }\hspace{.5cm} H_2\left(\frac{\pi+2k\pi}{\rho}\right)=\frac{2}{\rho(\gamma^2+1)}(\rme^{\gamma(\pi+2k\pi)}+1), $$
As a consequence, if $\gamma k>0$ and $\varepsilon>0$ is small enough, the function $H_2$ changes signs on the intervals $$I_k:=\left(\frac{2\pi k+\varepsilon}{\rho}, \frac{\pi+2\pi k-\varepsilon}{\rho}\right)\hspace{.5cm}\mbox{ and }\hspace{.5cm}\hat{I}_k:=\left(\frac{\pi+2\pi k+\varepsilon}{\rho}, \frac{2\pi (k+1)-\varepsilon}{\rho}\right).$$ 
Let us denote by $s_k\in I_k$ and $\hat{s}_k\in\hat{I}_k$ zeros of $H_2$ when $\gamma k>0$. For $|k|\rightarrow+\infty$, it holds:

\begin{enumerate}
    \item If $s_0>0$ and $\gamma>0$, we choose $\rho>0$ and $k\in\N$. In this case, 
    $$\gamma k>0, \hspace{.5cm}\hat{s}_k\rightarrow+\infty, \hspace{.5cm} H_2(\hat{s}_k)=0\hspace{.5cm}\mbox{ and }$$
    $$H_1(\hat{s}_k)=\frac{2}{\rho}\rme^{\gamma\rho \hat{s}_k}\sin(\rho \hat{s}_k)-2\hat{s}_k+s_0\rightarrow-\infty, \hspace{.5cm}\mbox{ since }\hspace{.5cm}\sin (\rho \hat{s}_k)\leq -\sin(\varepsilon)<0. $$

     \item If $s_0>0$ and $\gamma<0$, we choose $\rho<0$ and $-k\in\N$. In this case, 
    $$\gamma k>0, \hspace{.5cm}s_k\rightarrow+\infty, \hspace{.5cm} H_2(s_k)=0\hspace{.5cm}\mbox{ and }$$
    $$H_1(s_k)=\frac{2}{\rho}\rme^{\gamma\rho s_k}\sin(\rho s_k)-2s_k+s_0\rightarrow-\infty, \hspace{.5cm}\mbox{ since }\hspace{.5cm}\sin (\rho s_k)\geq\sin(\varepsilon)>0. $$

      \item If $s_0<0$ and $\gamma>0$, we choose $\rho>0$ and $k\in\N$. In this case, 
    $$\gamma k>0, \hspace{.5cm}s_k\rightarrow+\infty, \hspace{.5cm} H_2(s_k)=0\hspace{.5cm}\mbox{ and }$$
    $$H_1(s_k)=\frac{2}{\rho}\rme^{\gamma\rho s_k}\sin(\rho s_k)-2s_k+s_0\rightarrow+\infty, \hspace{.5cm}\mbox{ since }\hspace{.5cm}\sin (\rho s_k)\geq \sin(\varepsilon)>0. $$

      \item If $s_0<0$ and $\gamma<0$, we choose $\rho<0$ and $-k\in\N$. In this case, 
    $$\gamma k>0, \hspace{.5cm}\hat{s}_k\rightarrow+\infty, \hspace{.5cm} H_2(\hat{s}_k)=0\hspace{.5cm}\mbox{ and }$$
    $$H_1(\hat{s}_k)=\frac{2}{\rho}\rme^{\gamma\rho \hat{s}_k}\sin(\rho \hat{s}_k)-2\hat{s}_k+s_0\rightarrow+\infty, \hspace{.5cm}\mbox{ since }\hspace{.5cm}\sin (\rho \hat{s}_k)\leq -\sin(\varepsilon)<0. $$
\end{enumerate}

Therefore, in any case, there exists $S>0$ such that $H_2(S)=0$ and $s_0H_1(S)<0$. Hence, for $s_1=H_1(S)$, we get
$$s_1(0, \theta^{-1}\xi)=\phi(S, \phi(S, s_0(0, \theta^{-1}\xi), \rho\alpha^{-1}), -\rho\alpha^{-1})\in\phi(S, \phi(S, \inner\OC^+(e), \rho\alpha^{-1}), -\rho\alpha^{-1})\subset \inner\OC^+(e),$$
which proves the assertion.

\bigskip
{\bf Step 3:} If for some $s_0\in\R$ 
$$s_0(0, \theta^{-1}\xi)\in\inner\OC^{\pm}(e)\hspace{.5cm}\mbox{ then }\hspace{.5cm}\exists R>0; \;\forall s>R\hspace{.5cm}\implies \hspace{.5cm}\left\{\begin{array}{cc}
    s(0, \theta^{-1}\xi)\in\inner\OC^{\pm}(e) &  \mbox{ if } s_0>0 \\
   -s(0, \theta^{-1}\xi)\in\inner\OC^{\pm}(e)  & \mbox{ if } s_0<0 
\end{array}.\right.$$

Again, let us show the assertion only for the positive orbit. Since $\Sigma_{G(\theta)}$ satisfies the LARC, by \cite[Lemma 4.5.2]{CK} it holds that 
$$s_0(0, \theta^{-1}\xi)\in\inner\OC^{+}(e)\hspace{.5cm}\implies\hspace{.5cm}s_0(0, \theta^{-1}\xi)\in\inner_{\leq S}\OC^{+}(e)\stackrel{(\ref{properties}}{=}\inner_S\OC^{+}(e),$$
for some $S>0$. Consequently, there exists an interval $(a, b)\subset\R$ satisfying $\frac{s_0}{|s_0|}\cdot (a, b)\subset(0, +\infty)$ and 
$$\forall s\in(a, b), \hspace{1cm} s(0, \theta^{-1}\xi)\in \inner\OC_S^+(e).$$
On the other hand, the fact that points in $\{0\}\times\R^2$ are fixed points of the drift allows us to obtain that
    $$(s_1+s_2)(0, \theta^{-1}\xi)=s_1(0, \theta^{-1}\xi)s_2(0, \theta^{-1}\xi)=\varphi_S(s_1(0, \theta^{-1}\xi))s_2(0, \theta^{-1}\xi)\in\inner_{2S}\OC^+(e)\subset\inner\OC^+(e),$$
    and, inductively, we conclude that
    $$\forall n\in\N, s\in (na, nb), \hspace{1cm} s(0, \theta^{-1}\xi)\in\inner\OC^+(e).$$
    Since, $a, b$ have the same sign as $s_0$, there exists $R>0$ such that
    $$\frac{s_0}{|s_0|}(R, +\infty)\subset \bigcup_{n\in\N}(na, nb),$$
    and the assertion follows.

\bigskip

{\bf Step 4:} $\Sigma_{G(\theta)}$ is controllable.

Let us consider $(t, v)\in\inner\OC^+(e)$, which exists by the LARC. By Step 1., there exists $s>0$ and ${\bf u}\in\UC$ such that 
$$\pi(\phi(s, (t, v), {\bf u}))=\hat{\phi}(s, \pi(t, v), {\bf u})=(0, 0)=\pi(e)\hspace{.5cm}\implies\hspace{.5cm} \phi(s, (t, v), {\bf u})\in H.$$
Consequently, there exists $s_0\in\R$ such that 
$$s_0(0, \theta^{-1}\xi)=\phi(s, (t, v), {\bf u})\subset \phi(s, \inner\OC^+(e), {\bf u})\subset\inner\OC^+(e).$$
By Step 2., the previous implies the existence of $s_1\in\R$ with $s_0s_1<0$ and such that $s_1(0, \theta^{-1}\xi)\in\inner\OC^{+}(e)$.

Now, Step 3. applied for $s_0$ and $s_1$ assures the existence of $S>0$ great enough, such that 
$$S(0, \theta^{-1}\xi)\in\inner\OC^+(e)\hspace{.5cm}\mbox{ and }\hspace{.5cm} -S(0, \theta^{-1}\xi)\in \inner\OC^+(e).$$
Since, by \cite[Lemma 4.5.2]{CK}, there exists $S_1, S_2>0$ such that
$$S(0, \theta^{-1}\xi)\in\inner_{S_1}\OC^+(e)\hspace{.5cm}\mbox{ and }\hspace{.5cm} -S(0, \theta^{-1}\xi)\in\inner_{S_2}\OC^+(e),$$
we conclude that 
$$e=(S(0, \theta^{-1}\xi))(-S(0, \theta^{-1}\xi))=\varphi_{S_2}(S(0, \theta^{-1}\xi))(-S(0, \theta^{-1}\xi))\in \varphi_{S_2}(\inner\OC_{S_1}^+(e))\inner\OC_{S_2}^+(e)$$
$$\stackrel{\ref{properties}}{=}\inner\OC_{S_1+S_2}^+(e)\subset\inner\OC^+(e),$$
which, by Theorem \ref{subgrupos} implies the controllability of $\Sigma_{G(\theta)}$, concluding the proof.
\end{proof}

\begin{remark}
    The previous result is quite remarkable, since it shows how strong the influence of the group structure is on the controllability of linear control systems. Moreover, it shows that LARC is not always enough to assure the existence of a control set with a nonempty interior.
\end{remark}

\section{Connected non-simply connected groups}

In this section, we analyze the LCSs on 3D solvable Lie groups $G$ that are not simply connected. The analysis is done by using the lift of LCSs from $G$ to its simply connected cover, as commented in Section 2.4.

Due to the characterization present in \cite[Chapter 7]{onis}, the only connected, solvable, nonnilpotent 3D Lie groups associated with the Lie algebras $\fg(\theta)$ that are also non-simply connected appear when 
$$\theta=\left(\begin{array}{cc}
   0  & -1 \\
    1 & 0
\end{array}\right)\hspace{.5cm}\mbox{ or }\hspace{.5cm}\theta=\left(\begin{array}{cc}
   1  & 0 \\
    0 & 0
\end{array}\right).$$

In the next sections, we consider, separately, both cases.

\subsection{The group of rigid motions and its $n$-fold covers}

If $\theta=\left(\begin{array}{cc}
   0  & -1 \\
    1 & 0
\end{array}\right)$ then, for each $n\in\N$
$$Z_n:=\{(2kn\pi, 0)\in G(\theta), k\in\Z\}$$ 
is a discrete central subgroup of $G(\theta)$. In particular, the quotients $Z_n\setminus G(\theta)$ are connected Lie groups associated with the Lie algebra $\fg(\theta)$. The group $SE(2):=Z_1\setminus G(\theta)$ is the group of {\it proper motions} of $\R^2$. It is the connected component of the group of rigid motions of $\R^2$. For $n\geq 2$ the group $SE(2)_n:=Z_n\setminus G(\theta)$ is a $n$-fold cover of $SE(2)$. 

The canonical projection $\pi_n: G(\theta)\rightarrow Z_n\setminus G(\theta)$ is given by 
$$\pi(t, v)=([t], v), \hspace{.5cm} [t]:=t+2n\pi\Z.$$

By the results in Sections 2.3, in order to analyze the behavior of LCSs on $\Sigma_{SE(2)_n}$, it is enough to understand LCSs on $G(\theta)$ whose flow of the drift let the subgroups $Z_n$ invariant. However, using the expression (\ref{linearvector}), we have that 
$$\varphi_s(2nk\pi, 0)=(2nk\pi,\rme^{sA}0+ \Lambda^{\theta}_{2nk\pi}\Lambda_s^A\xi)=(2nk\pi, 0), \hspace{.5cm}\mbox{ where we used that }\hspace{.5cm}\Lambda^{\theta}_{2nk\pi}\equiv 0,$$
implying that a linear vector field on $SE(2)_n$ is given by 
$$\XC([t], v)=(0, Av+\Lambda_t^{\theta}\xi).$$
On the other hand, if $\det A\neq 0$, a LCS on $G(\theta)$ is conjugated to the control-affine system $\Sigma_{\R\times\R^2}$ through the diffeomorphism
$$\psi(t, v)=(t, \rho_{-t}(v+\Lambda_t^{\theta}A^{-1}\xi)).$$
By Proposition \ref{controlsetsigma1}, for all control $u$ in the connected component of the zero in $\widehat{\Omega}$, the fiber 
$\R\times \{v(u)\}$
is contained in the interior of the control set $\CC_{\R\times\R^2}$ of $\Sigma_{\R\times\R^2}$. As a consequence, 
$$\psi^{-1}(\R\times \{v(u)\})=\{(t, \rho_{t}(v(u)-\Lambda_tA^{-1}\xi)), \;\;t\in\R\}\subset\inner\CC_{G(\theta)}.$$

We can now prove the main result for the control set of LCSs on $SE(2)_n$.

\begin{theorem}
Let $\Sigma_{SE(2)_n}$ be a LCS on $SE(2)_n$ satisfying the LARC. It holds:
\begin{itemize}
    \item[(i)] If $\det A\neq 0$ then $\Sigma_{SE(2)_n}$ admits a unique control set $\CC_{SE(2)_n}$
satisfying 
$$\pi_n^{-1}\left(\CC_{SE(2)_n}\right)=\CC_{G(\theta)},$$
where $\pi_n:G(\theta)\rightarrow SE(2)_n$ is the canonical projection and $\CC_{G(\theta)}$ is the unique control set of the LCS on $G(\theta)$ that is $\pi_n$-conjugated to $\Sigma_{SE(2)_n}$.

\item[(ii)] If $\det A= 0$ then $\Sigma_{SE(2)_n}$ admits an infinite number of control sets with empty interior.
\end{itemize}
\end{theorem}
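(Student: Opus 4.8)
The plan is to handle the two items separately, in each case transferring the results already established on the simply connected cover $G(\theta)$ across the covering map $\pi_n$, and exploiting the fact that $\rho_{2\pi n}=\rme^{2\pi n\theta}=I_{\R^2}$ and $\Lambda^\theta_{2\pi n}=0$ when $\theta$ is the infinitesimal rotation.

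For item (i): by the discussion preceding the statement, $\pi_n$ conjugates $\Sigma_{SE(2)_n}$ to a LCS $\Sigma_{G(\theta)}$ on $G(\theta)$ with the same data $\XC=(A,\xi)$, $Y=(\alpha,\eta)$ and inheriting the LARC; since $\det A\neq0$ this LCS has nilrank two, so Theorem \ref{mains} provides its unique control set $\CC_{G(\theta)}$ with nonempty interior. The first key step is that every $z\in Z_n$ is a singularity of the flow $\{\varphi_s\}$ (indeed $\varphi_s(2kn\pi,0)=(2kn\pi,0)$ because $\Lambda^\theta_{2kn\pi}=0$) and $Z_n$ is central, so left translation $L_z$ is a symmetry of $\Sigma_{G(\theta)}$; hence $L_z(\CC_{G(\theta)})$ is again a control set with nonempty interior, and uniqueness forces $L_z(\CC_{G(\theta)})=\CC_{G(\theta)}$. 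Since the fibres of $\pi_n$ are exactly the $Z_n$-cosets, this $Z_n$-invariance gives $\pi_n^{-1}(\pi_n(\CC_{G(\theta)}))=\CC_{G(\theta)}$. I would then invoke Proposition \ref{conjugation}: it yields a control set $E$ of $\Sigma_{SE(2)_n}$ with $\pi_n(\CC_{G(\theta)})\subset E$, and for any $y_0$ in the interior of $\pi_n(\CC_{G(\theta)})$ (nonempty since $\pi_n$ is open) the $Z_n$-invariance together with openness of $\pi_n$ give $\pi_n^{-1}(y_0)\subset\inner\CC_{G(\theta)}$; part 2 of Proposition \ref{conjugation} then gives $\CC_{G(\theta)}=\pi_n^{-1}(E)$, whence $E=\pi_n(\CC_{G(\theta)})=:\CC_{SE(2)_n}$ is a control set with $\pi_n^{-1}(\CC_{SE(2)_n})=\CC_{G(\theta)}$.

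It remains to establish uniqueness of $\CC_{SE(2)_n}$ among control sets with nonempty interior, and this is where the only real work lies. If $e\in\inner\CC_{SE(2)_n}$ this is immediate from Theorem \ref{subgrupos}, since $SE(2)_n$ is solvable. Otherwise, equivalently $(0,0)\notin\inner\CC_{G(\theta)}$, the dichotomy inside the proof of Theorem \ref{mains} forces $A=\lambda I_{\R^2}$ with $\lambda\neq0$, and I would repeat that proof's periodicity argument on the cylinder $S^1\times\R^2$ obtained by descending the diffeomorphism $\psi(t,v)=(t,\rho_{-t}(v+\Lambda^\theta_tA^{-1}\xi))$, which maps $Z_n$ to $Z_n$. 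For a periodic interior point of a competing control set one has $\sum_i s_iA(u_i)=\big(\sum_i s_i\big)A-\alpha\big(\sum_i s_iu_i\big)\theta$, and periodicity of the $S^1$-coordinate forces $\alpha\sum_i s_iu_i\in2\pi n\Z$, so $\rme^{\sum_i s_iA(u_i)}=\rme^{(\sum_i s_i)A}=\rme^{(\sum_i s_i)\lambda}I_{\R^2}$ because $\rme^{k\theta}=I_{\R^2}$ for $k\in2\pi\Z$; the contraction/expansion of $\rme^{\tau\lambda}$ together with $0\in\overline{\CC_{\R^2}}$ then forces the competitor to coincide with $\CC_{SE(2)_n}$ exactly as in Theorem \ref{mains}.

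For item (ii): commutation with the rotation generator $\theta$ together with $\det A=0$ forces $A=0$, so the system has nilrank zero; after the normalization of Proposition \ref{conjugation2} (whose automorphisms fix $Z_n$, hence descend) we may assume $\dot t=u\alpha$, $\dot v=\Lambda^\theta_t\xi$, with the LARC reading $\alpha\neq0$, $\xi\neq0$. Case 4 of Lemma \ref{positive} gives $\hat\xi\in\R^2$ with $\langle\Lambda^\theta_t\xi,\hat\xi\rangle=|\xi|^2(1-\cos t)\geq0$ for all $t$; since $\Lambda^\theta_t$ is $2\pi$-periodic in $t$, the function $\bar f([t],v):=\langle v,\hat\xi\rangle$ is well defined on $SE(2)_n$ and nondecreasing along every trajectory of $\Sigma_{SE(2)_n}$. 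By Remark \ref{function} each control set is contained in a level set of $\bar f$, and as $\bar f$ is a submersion ($\hat\xi\neq0$) every such level set has empty interior, so all control sets have empty interior. Finally, the singularities of the drift $\XC([t],v)=(0,\Lambda^\theta_t\xi)$ are exactly $\{[0]\}\times\R^2$, each lying in some control set, and $\bar f$ attains every real value on this set; hence there are infinitely many distinct control sets, completing the proof.
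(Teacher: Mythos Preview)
Your proposal is correct. For item (ii) your argument is essentially identical to the paper's: both invoke case 4 of Lemma \ref{positive} to obtain $\hat\xi$ with $\langle\Lambda_t^\theta\xi,\hat\xi\rangle\geq0$, define the descended function $([t],v)\mapsto\langle v,\hat\xi\rangle$, observe it is nondecreasing along trajectories, and conclude from the singularities $([0],v)$ that each level cylinder carries at least one control set, yielding infinitely many control sets with empty interior.

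For item (i) the two proofs diverge in how they check the fiber hypothesis $\pi_n^{-1}(y_0)\subset\inner\CC_{G(\theta)}$ of Proposition \ref{conjugation}. The paper does this by a direct computation with the conjugating diffeomorphism $\psi$: using $\rho_{2kn\pi}=I_{\R^2}$ and $\Lambda^\theta_{2kn\pi}=0$ it verifies that the concrete fiber $\pi_n^{-1}(\pi_n(0,v(u)))=\{(2kn\pi,v(u)):k\in\Z\}$ sits inside $\psi^{-1}(\R\times\{v(u)\})\subset\inner\CC_{G(\theta)}$. You instead argue structurally: each $z\in Z_n$ is a singularity of $\varphi$ and is central, so $L_z$ is an equivalence of $\Sigma_{G(\theta)}$, and uniqueness of $\CC_{G(\theta)}$ forces $L_z(\CC_{G(\theta)})=\CC_{G(\theta)}$; this $Z_n$-invariance gives the fiber condition for \emph{every} interior point at once. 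Your route is more conceptual and bypasses the explicit formula for $\psi$; the paper's route is hands-on. You also spell out the uniqueness of $\CC_{SE(2)_n}$ by descending the periodicity argument from the proof of Theorem \ref{mains} to the cylinder $(\R/2\pi n\Z)\times\R^2$ (using that periodicity in the $S^1$-factor forces $\alpha\sum_i s_iu_i\in2\pi n\Z$, whence $\rme^{\sum_i s_iA(u_i)}=\rme^{\tau\lambda}I_{\R^2}$), whereas the paper's written proof of this theorem does not argue uniqueness separately.
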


\begin{proof}
    (i) By Proposition \ref{conjugation} there exists a control set $\CC_{SE(2)_n}$ of the system $\Sigma_{SE(2)_n}$ satisfying $\pi_n\left(\CC_{G(\theta)}\right)\subset \CC_{SE(2)_n}$ and the equality holds if we show that $$\pi_n^{-1}(\pi_n(t, v))\subset \inner\CC_{G(\theta)},\hspace{.5cm}\mbox{ 
    for some }\hspace{.5cm}(t, v)\in\inner \CC_{G(\theta)}.$$
    Since for all $k,n\in\N$ we have that $\rho_{2nk\pi}=I_{\R^2}$, we get that 
    
    $$\forall u\in\widehat{\Omega}, \hspace{.5cm}\pi_n^{-1}(\pi_n(0, v(u)))=\{(2nk\pi, v(u)), k\in\Z\}$$
    $$=\left\{\left(2nk\pi, \rho_{2nk\pi}(v(u))-\Lambda_{2nk\pi}^{\theta}A^{-1}\xi)\right), k\in\Z\right\}\subset\psi^{-1}(\R\times\{v(u)\})\subset\inner\CC_{G(\theta)},$$
    implying that $\pi_n^{-1}(\CC_{SE(2)_n})=\CC_{G(\theta)}$ and showing the assertion.

        (ii) Since $A$ and $\theta$ commute, $\det A=0$ if and only if $A\equiv 0$. Therefore, up to isomorphisms, $\Sigma_{SE(2)_n}$ is given,in coordinates, by 
\begin{flalign*}
	  &&\left\{\begin{array}{l}
     \dot{[t]}=u\alpha\\
     \dot{v}=\Lambda_t^{\theta}\xi
\end{array}\right., u\in\Omega.  &&\hspace{-1cm}\left(\Sigma_{SE(2)_n}\right)
\end{flalign*}
        
        Let $\hat{\xi}$ as given in Lemma \ref{positive} and define the function 
        $$f:SE(2)_n\rightarrow \R, \hspace{1cm} f([t], v):=\langle v, \hat{\xi}\rangle.$$
       Analogously as in the proof of Theorem \ref{kerA=2}, one can show that the functions $g_{{\bf u}}(s)=\langle \phi(s, ([t], v), {\bf u}), \hat{\xi}\rangle$ satisfy
    $$g'_{{\bf u}}(s)=\frac{d}{ds}\langle \phi_2(s, ([t], v), {\bf u}), \hat{\xi}\rangle=\langle \Lambda_{\phi_1(s, ([t], v), {\bf u})}^{\theta}\xi, \hat{\xi}\rangle\geq 0,$$
and are nondecreasing. Therefore, if  
$$([t_1], v_1), ([t_2], v_2)\in G(\theta); \hspace{.5cm} \mbox{ satisfy 
 }\hspace{.5cm}f([t_1], v_1)<f([t_2], v_2),$$
they cannot be in the same control set of $\Sigma_{SE(2)_n}$ (see Remark \ref{function}). 

On the other hand, the points $([0], v)\in SE(2)_n$ are fixed points of the drift $\XC=(0, \xi)$, and hence, any such point is contained in a control set of $\Sigma_{SE(2)_n}$. Therefore, for any $r\in\R$ the cylinder
$$C_r:=\{([t], v), \hspace{.3cm}\langle v, \hat{\xi}\rangle=r\},$$
contains (at least) one control of $\Sigma_{SE(2)_n}$, concluding the proof.
\end{proof}

\begin{remark}
    A much more detailed analisys of the control sets of $SE(2)$ was done in \cite{DSAyAOR}.
\end{remark}

\subsection{The group $\mathrm{Aff}(2)\times S^1$}

If $\theta=\left(\begin{array}{cc}
   1  & 0 \\
    0 & 0
\end{array}\right)$ the group $G(\theta)$ can be seen as the Cartesian product $\mathrm{Aff}(\R)\times\R$. In fact, since $v\in\R^2$ is written uniquely as $v=x\mathbf{e}_1+y\mathbf{e}_2$, the map
\begin{equation}
    \label{difeo}
    (t, v)\in G(\theta)\mapsto ((t, x), y)\in\mathrm{Aff}(\R)\times \R,
\end{equation}
is a diffeomorphim. Moreover, 
$$\rho_tv=x\rho_t\mathbf{e}_1+y\rho_t\mathbf{e}_2=x\rme^t\mathbf{e}_1+y\mathbf{e}_2,$$
implies that 
$$(t_1, v_1)(t_2, v_2)=(t_1+ t_2, v_1+\rho_{t_1}v_2)= (t_1+t_2, (x_1+\rme^{t_1}x_2)\mathbf{e}_1+(y_1+y_2)\mathbf{e}_2)$$
$$\mapsto((t_1+t_2, x_1+\rme^{t_1}x_2), y_1+y_2)=((t_1, x_1)(t_2, x_2), y_1+y_2)=((t_1, x_1), y_1)((t_2, x_2), y_2),$$
which shows that the map (\ref{difeo}) is an isomorphism.

Following \cite[Chapter 7]{onis}, up to isomorphisms, the only discrete central subgroup of $G(\theta)=\mathrm{Aff}(\R)\times\R$ is $Z:=\{((0, 0), 2k\pi), k\in\Z\}$. Therefore, $G(\theta)/Z=\mathrm{Aff}(\R)\times S^1$ is the unique connected, non-simply connected Lie group with the Lie algebra $\fg(\theta)=\mathfrak{aff}(\R)\times\R$, where $S^1=\R/\Z$. The canonical projection is given by  
$$\pi: \mathrm{Aff}(\R)\times\R\rightarrow \mathrm{Aff}(\R)\times S^1, \hspace{.5cm} ((t, x), y)\mapsto ((t, x), [y]).$$ 

By Section 2.3, the linear vector fields of $\mathrm{Aff}(\R)\times S^1$ are completely determined by the linear vector fields on $G(\theta)=\mathrm{Aff}(\R)\times\R$ whose flows let the subgroup $Z$ invariant. Let then $\XC$ be a linear vector field on $G(\theta)$ with associated flow $\{\varphi_s\}_{s\in\R}$ and assume that 
$$\varphi_s(0,2k\pi e_2)\in Z, \hspace{1cm}\forall s\in\R, k\in\Z.$$
Using the expression in (\ref{linearvector}) for $\varphi_s$, one obtains that,
$$\varphi_s(0,2k\pi e_2)=(0, 2k\pi \mathrm{e}^{sA}e_2)\in Z\hspace{.5cm}\iff\hspace{.5cm} \rme^{sA}e_2\in 2\pi\Z e_2\hspace{.5cm}\iff\hspace{.5cm} Ae_2=0.$$

\begin{theorem}
    Any linear control system $\Sigma_{\mathrm{Aff}(\R)\times S^1}$ on $\mathrm{Aff}(\R)\times S^1$ satisfying the LARC, admits a unique control set $\CC_{\mathrm{Aff}(\R)\times S^1}$
satisfying:
  $$\CC_{\mathrm{Aff}(\R)\times S^1}=\pi_1^{-1}(\CC_{\mathrm{Aff}(2)})=\CC_{\mathrm{Aff}(\R)}\times S^1,$$
where $\CC_{\mathrm{Aff}(\R)}$ is the unique control set of the LCS on $\mathrm{Aff}(\R)$ that is $\pi_1$-conjugated to $\Sigma_{\mathrm{Aff}(\R)\times S^1}$, where $\pi_1:\mathrm{Aff}(\R)\times S^1\rightarrow \mathrm{Aff}(\R)$ is the canonical projection. Moreover, $\CC_{\mathrm{Aff}(\R)\times S^1}$ is open if $\tr A>0$, closed if $\tr A<0$, and equal to $\mathrm{Aff}(\R)\times S^1$ if $\tr A=0$.

\end{theorem}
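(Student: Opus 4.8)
The plan is to settle both regimes of the statement by the familiar device of lifting to the simply connected cover and projecting onto $\mathrm{Aff}(\R)$. First lift $\Sigma_{\mathrm{Aff}(\R)\times S^1}$ to the LCS $\widetilde{\Sigma}$ on the simply connected cover $G(\theta)=\mathrm{Aff}(\R)\times\R$ with $\theta=\mathrm{diag}(1,0)$; by Section~\ref{section3D} and the discussion preceding the statement, $\widetilde{\Sigma}$ still satisfies the LARC, and the existence of a linear vector field on $\mathrm{Aff}(\R)\times S^1$ forces $A\mathbf{e}_2=0$. Since $A$ commutes with $\theta=\mathrm{diag}(1,0)$ it is diagonal, so $A=\mathrm{diag}(\tr A,0)$ and the nilrank of $\widetilde{\Sigma}$ equals $\operatorname{rank}A$, which is $1$ if $\tr A\neq0$ and $0$ if $\tr A=0$; these are exactly the two cases of the statement, and I treat them separately.

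Assume first $\tr A\neq0$, so $\widetilde{\Sigma}$ has nilrank one and, being LARC, the nilrank-one Theorem applies: $\widetilde{\Sigma}$ has a unique control set with nonempty interior $\CC_{G(\theta)}=\CC_{H\setminus G(\theta)}\times\ker A$ with $H=\{0\}\times\ker A=\{0\}\times\R\mathbf{e}_2$, and $\CC_{G(\theta)}$ is open if $\tr A>0$ and closed if $\tr A<0$; moreover $H\setminus G(\theta)\cong\mathrm{Aff}(\R)$ by the computations of Section~\ref{section3D}. Now transport this to $\mathrm{Aff}(\R)\times S^1$ along the covering $\pi:\mathrm{Aff}(\R)\times\R\to\mathrm{Aff}(\R)\times S^1$, which quotients precisely the line $\ker A=\R\mathbf{e}_2$. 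Because $\CC_{G(\theta)}=\CC_{\mathrm{Aff}(\R)}\times\R$ is a full cylinder along that line, every fibre $\pi^{-1}(\pi(p))$ over an interior point $p$ lies in $\inner\CC_{G(\theta)}$; hence, starting from the control set $E$ of $\Sigma_{\mathrm{Aff}(\R)\times S^1}$ provided by Proposition~\ref{conjugation}.1, the openness of $\pi$ together with Proposition~\ref{conjugation}.2 give $\pi^{-1}(E)=\CC_{G(\theta)}$ and therefore $E=\pi(\CC_{G(\theta)})=\CC_{\mathrm{Aff}(\R)}\times S^1=\pi_1^{-1}(\CC_{\mathrm{Aff}(\R)})$. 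Since $\pi$ is both an open and a closed map, $E$ inherits the topological type of $\CC_{G(\theta)}$, which yields the open/closed dichotomy. Uniqueness follows by projecting once more: any control set of $\Sigma_{\mathrm{Aff}(\R)\times S^1}$ with nonempty interior maps (Proposition~\ref{conjugation}.1) into a control set of $\Sigma_{\mathrm{Aff}(\R)}$ with nonempty interior, which must be $\CC_{\mathrm{Aff}(\R)}$ (it contains the identity in its interior and $\mathrm{Aff}(\R)$ is solvable), so it is contained in $\pi_1^{-1}(\CC_{\mathrm{Aff}(\R)})=E$ and equals $E$ by maximality.

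Now assume $\tr A=0$, i.e.\ $A=0$; lifting is of no use here because on $G(\theta)$ the system has infinitely many control sets with empty interior by Theorem~\ref{kerA=2}.1, so project directly. The compact central subgroup $N=\{e\}\times S^1$ is pointwise fixed by the flow of the drift (as $A\mathbf{e}_2=0$), so $\Sigma_{\mathrm{Aff}(\R)\times S^1}$ is $\pi_1$-conjugate to a LCS $\Sigma_{\mathrm{Aff}(\R)}$ whose derivation is nilpotent; thus $\mathrm{Aff}(\R)=\mathrm{Aff}(\R)^0$, and since projecting preserves the LARC, a direct inspection of the ad-rank equation shows $\Sigma_{\mathrm{Aff}(\R)}$ satisfies the ad-rank condition, hence is locally controllable, hence controllable by Theorem~\ref{subgrupos}; in particular $\CC_{\mathrm{Aff}(\R)}=\mathrm{Aff}(\R)$. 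It remains to lift controllability back. After conjugating away the left-invariant part (Proposition~\ref{conjugation2}.1) the system reads $\dot t=u\alpha$, $\dot x=(\rme^{t}-1)\xi_1$, $\dot{[y]}=\xi_2 t$ with $\alpha\xi_1\xi_2\neq0$, so along any trajectory whose $\mathrm{Aff}(\R)$-projection is a loop the $S^1$-coordinate changes by $\xi_2\int_0^S t(s)\,ds$. The key claim is that the set of values of $\int_0^S t(s)\,ds$ over such base loops fills an unbounded interval, hence covers all of $S^1$ after multiplication by $\xi_2$; granting this, every point $(e,[c])$ of $N$ lies in $\OC^+(e)$, so $N\subset\OC^+(e)$, and combining $N\cdot\OC^+(e)\subset\OC^+(e)$ (Proposition~\ref{properties} together with the fact that $N$ is central and fixed by the drift) with $\pi_1(\OC^+(e))=\mathrm{Aff}(\R)$ gives $\OC^+(e)=\mathrm{Aff}(\R)\times S^1$, and likewise $\OC^-(e)=\mathrm{Aff}(\R)\times S^1$; by Theorem~\ref{subgrupos} and $\mathrm{Aff}(\R)\times S^1=(\mathrm{Aff}(\R)\times S^1)^0$ the system is controllable, so $\CC_{\mathrm{Aff}(\R)\times S^1}=\mathrm{Aff}(\R)\times S^1=\pi_1^{-1}(\CC_{\mathrm{Aff}(\R)})$.

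The one substantial step is the winding estimate in the case $A=0$: one must build, for every residue in $S^1$, an $\mathrm{Aff}(\R)$-loop realizing it as $\xi_2\int_0^S t\,ds$. Here one concatenates an ``up--down'' excursion raising $t$ to a height $T$ (and back to $0$) with a ``down--up'' excursion reaching depth $-T'$, choosing $T'=T'(T)$ so that the two net $x$-displacements cancel and the concatenation is genuinely a base loop; because the $x$-displacement grows like $\rme^{T}$ on the half-space $t>0$ but only linearly in $T'$ on the half-space $t<0$, this forces $T'$ of order $\rme^{T}$, so $\int_0^S t\,ds$ is of order $-T'^2$ and runs continuously from $0$ down to $-\infty$ as $T$ increases, hence passes through every value of an unbounded interval. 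The remaining points are routine: identifying $H\setminus G(\theta)$ with $\mathrm{Aff}(\R)$, checking that the fibres over interior points sit in the interior of $\CC_{G(\theta)}$, and the standard covering-space bookkeeping for uniqueness and for transferring openness and closedness.
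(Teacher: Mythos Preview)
Your argument is correct in both regimes, but in each case you take a route different from the paper's.

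For $\tr A\neq 0$, the paper works directly on $\mathrm{Aff}(\R)\times S^1$: since LARC and nilrank one imply the ad-rank condition, Theorem~\ref{subgrupos} gives $e\in\inner\CC_{\mathrm{Aff}(\R)\times S^1}$ and $G^0=\{(0,0)\}\times S^1\subset\inner\CC_{\mathrm{Aff}(\R)\times S^1}$ straight away; one application of Proposition~\ref{conjugation} along $\pi_1$ then finishes. You instead detour through the simply connected cover, invoke the full nilrank-one theorem there, and project down. This is valid and has the virtue of recycling the theorem already proved, but it is longer and uses two conjugations ($\pi$ up and $\pi_1$ down) where one suffices.

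For $\tr A=0$, the difference is more substantial. The paper never computes the $S^1$-winding of any explicit trajectory. Instead, it uses LARC to find some $((0,0),[y_0])\in\inner\OC^+_S(e)$, perturbs $[y_0]$ to a rational $[r]$ by openness, and iterates: $((0,0),[mr])\in\inner\OC^+_{mS}(e)$ via Proposition~\ref{properties}, so choosing $m_0$ with $m_0r\in\Z$ yields $e\in\inner\OC^+(e)$; Theorem~\ref{subgrupos} then gives controllability since $G=G^0$. Your approach, by contrast, is constructive: you build a one-parameter family of base loops whose $S^1$-holonomy $\xi_2\int_0^S t(s)\,ds$ sweeps an unbounded interval, hence all of $S^1$, and then use $N\subset\OC^+(e)$ plus $N\cdot\OC^+(e)\subset\OC^+(e)$ to conclude. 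The paper's density-and-iteration trick is shorter and avoids the asymptotic analysis of $T'(T)$; your explicit construction, once the continuity of $T\mapsto T'(T)$ and the growth $T'\sim\rme^T$ are written out, gives more quantitative information but needs those details spelled out to be complete. Both arguments ultimately rely on the same two structural facts (controllability of the $\mathrm{Aff}(\R)$ quotient and the drift-invariance of the compact central fibre $N$), just exploited differently.
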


\begin{proof} Let us assume, w.l.o.g., that the left-invariant vector field of $\Sigma_{\mathrm{Aff}(\R)\times S^1}$ is $Y=(\alpha, 0)$, with $\alpha\neq 0$. In this case, our system in $\mathrm{Aff}(\R)\times S^1$, is given by 
 \begin{flalign*}
	  &&\left\{\begin{array}{l}
     \dot{t}=u\alpha\\
     \dot{x}=\lambda x+(\rme^{t}-1)\xi_1\\
     \dot{[y]}=t\xi_2
\end{array}\right., u\in\Omega,  &&\hspace{-1cm}\left(\Sigma_{\mathrm{Aff}(\R)\times S^1}\right)
\end{flalign*}
where $\lambda=\tr A$. Moreover, the LARC is equivalent to $\alpha\xi_1\xi_2\neq 0$. 

If $\lambda\neq 0$, the system satisfy the ad-rank condition, and hence, there exists a unique control set $\CC_{\mathrm{Aff}(\R)\times S^1}$ with a nonempty interior. Moreover, since the points in $\{(0, 0)\}\times S^1$ are fixed by the flow of $\XC=(A, \xi)$, we get that $\{(0, 0)\}\times S^1\subset\inner \CC_{\mathrm{Aff}(\R)\times S^1}$ (see Theorem \ref{subgrupos}). On the other hand, the projection 
$$\pi: \mathrm{Aff}(\R)\times S^1\rightarrow \mathrm{Aff}(\R), \hspace{1cm}\pi_1((t, x), y)=(t, x),$$
conjugates $\Sigma_{\mathrm{Aff}(\R)\times S^1}$ and the linear control system 
\begin{flalign*}
	  &&\left\{\begin{array}{l}
     \dot{t}=u\alpha\\
     \dot{x}=\lambda x+ (\rme^{t}-1)\xi_1\\
\end{array}\right., u\in\Omega.  &&\hspace{-1cm}\left(\Sigma_{\mathrm{Aff}(\R)}\right)
\end{flalign*}
Since $(0, 0)\in\inner\CC_{\mathrm{Aff}(\R)}$ and $\pi_1^{-1}(0, 0)=\{(0, 0)\}\times S^1\subset \inner\CC_{\mathrm{Aff}(\R)\times S^1}$ we obtain, by Proposition \ref{conjugation}, that
$$\CC_{\mathrm{Aff}(\R)\times S^1}=\pi_1^{-1}(\CC_{\mathrm{Aff}(2)})=\CC_{\mathrm{Aff}(\R)}\times S^1.$$
In particular, $\CC_{\mathrm{Aff}(\R)\times S^1}$ is open when $\lambda>0$ and closed when $\lambda<0$ since the same holds for $\CC_{\mathrm{Aff}(\R)}$ (see \cite[Theorem 3.6]{DSAy0}).

Let us assume now that $\lambda= 0$. In this case, the canonical projection 
$$\pi_1: \mathrm{Aff}(\R)\times S^1\rightarrow \mathrm{Aff}(\R), \hspace{1cm}((t, x), y)\mapsto (t, x),$$
conjugates $\Sigma_{\mathrm{Aff}(\R)\times S^1}$ to the LCS 
\begin{flalign*}
	  &&\left\{\begin{array}{l}
     \dot{t}=u\alpha\\
     \dot{x}=(\rme^{t}-1)\xi_1\\
\end{array}\right., u\in\Omega,  &&\hspace{-1cm}\left(\Sigma_{\mathrm{Aff}(\R)}\right)
\end{flalign*}
on $\mathrm{Aff}(\R)$. By \cite[Theorem 3.2]{DSAy0}), we have that $\Sigma_{\mathrm{Aff}(\R)}$ is controllable. 

Let us consider $P\in\inner\OC^+(e)$, whose existence is assured by the LARC. The controllability of the projected system $\Sigma_{\mathrm{Aff}(\R)}$, implies the existence of ${\bf u}\in\UC$ and $s>0$ such that
$$\pi_1(\phi(\tau, P, {\bf u}))=\phi_1(\tau, \pi_1(P), {\bf u})=(0, 0),$$
and hence
$$((0, 0), [y_0])=\phi(s, P, {\bf u})\in\phi\left(s, 
\inner\OC^+(e), {\bf u} \right)\subset \inner\OC^+(e).$$

Moreover, the LARC together with \cite[Lemma 4.5.2]{CK} and Proposition \ref{properties}, imply the existence of $S>0$ such that $((0, 0), [y_0])\in \inner\OC_S^+(e)$.  Since $\inner\OC_S^+(e)$ is open, there exists $r\in\mathbb{Q}$ with $((0, 0), [r])\in \inner\OC_S^+(e)$. On the other hand, the fact that points in $\{(0, 0)\}\times S^1$ are fixed points of the drift allows us to obtain that
    $$((0, 0), [2r])=((0, 0), [r])((0, 0), [r])=\varphi_S((0, 0), [r])((0, 0), [r])\in\inner_{2S}\OC^+(e),$$
    and, inductively, that 
    $$((0, 0), [mr])\in\inner_{mS}\OC^+(e), \hspace{1cm}\forall m\in\N.$$
    Since $r\in\mathbb{Q}$, there exists $m_0\in\N$ such that $m_0 r\in\Z$ implying that 
    $$e=((0, 0), [0])=((0, 0), [m_0 r])\in \inner\OC_{m_0S}^+(e)\subset\inner\OC^+(e).$$ 
    Moreover, the fact that $\tr A=0$ implies $A\equiv 0$, gives us that the associated derivation $\DC$ has only eigenvalues equal to zero. Consequently, Theorem \ref{subgrupos} implies that $\Sigma_{\mathrm{Aff}(\R)\times S^1}$ is controllable, concluding the proof.
\end{proof}

\begin{remark}
Let us note the big difference between $\pi_1$-conjugated LCSs on $\mathrm{Aff}(\R)\times S^1$ and on its simply connected covering $\mathrm{Aff}(\R)\times \R$ when the associated matrix $A\equiv 0$. Precisely, $\Sigma_{\mathrm{Aff}(\R)\times \R}$ admits an infinite number of control sets with empty interiors, while its projection $\Sigma_{\mathrm{Aff}(\R)\times S^1}$ is controllable.
\end{remark}


\begin{thebibliography}{99}
\bibitem{DSAy2} 
	\newblock V. Ayala and A. Da Silva, 
	\newblock \emph{Central periodic points of linear systems}, 
	\newblock Journal of Differential Equations, 272 (2021), 310-329.



	\bibitem {DSAy0}
 	\newblock V. Ayala and A. Da Silva, 
 	\newblock \emph{The control set of a linear control system on the two dimensional Lie group.} 
 	\newblock Journal of Differential Equations, 268 No 11 (2020), 6683-6701


	
	\bibitem {DSAy1}
	\newblock V. Ayala and A. Da Silva, 
	\newblock \emph{On the characterization of the controllability property for linear control systems on nonnilpotent, solvable three dimensional Lie groups.} 
	\newblock Journal of Differential Equations, 266 No 12 (2019), 8233-8257

 
	\bibitem {DSAyDG}
	\newblock V. Ayala, A. Da Silva and D.A.G. Hernández
	\newblock \emph{Almost-Riemannian structures on nonnilpotent, solvable 3D Lie groups-} 
	\newblock Journal of Geometry and Physics, 119  (2023), 1-14
	

\bibitem {DSAyAOR}
	\newblock V. Ayala, A. Da Silva and A. O. Robles, 
	\newblock \emph{Dynamics of LCSs on the group of proper motions.}
 \newblock J Dyn Control Syst (2023). https://doi.org/10.1007/s10883-023-09667-9. 

     \bibitem {ADZ}
	\newblock V. Ayala, A. Da Silva and G. Zsigmond
	\newblock \emph{Control sets of linear systems on Lie groups} 
	\newblock Nonlinear Differ. Equ. Appl. (2017) 24:8

 \bibitem {AyTi}
	\newblock V. Ayala and J. Tirao, 
	\newblock \emph{Linear control systems on Lie groups and Controllability}, 
	\newblock Eds. G. Ferreyra et al., Amer. Math. Soc., Providence, RI, 1999.
 

    	\bibitem{CK}
    \newblock F. Colonius and W. Kliemann, 
    \newblock \emph{The Dynamics of Control},
	\newblock Birkh\"{a}user, Boston, 2000.
	
	   	\bibitem{CK2}
    \newblock F. Colonius, A. J. Santana and J. Setti, 
    \newblock \emph{Control sets for bilinear and affine systems},
	\newblock Mathematics of Control, Signals and Systems, 34, 1-35.

 \bibitem{DS}
	\newblock A. Da Silva, 
	\newblock \emph{Controllability of linear systems on solvable Lie groups},
	\newblock SIAM Journal on Control and Optimization 54 No 1 (2016), 372-390.
			
			

    \bibitem{Jouan}
	\newblock Ph. Jouan, 
	\newblock \emph{Controllability of linear systems on Lie Groups}, 
    \newblock J. Dyn. Control Syst., 52 (2014), pp. 3917–3934.

    
     	\bibitem{Jouan2}
 	\newblock Ph. Jouan, 
 	\newblock \emph{Equivalence of Control Systems with Linear Systems on Lie Groups and Homogeneous Spaces}, 
  \newblock ESAIM: Control Optimization and Calculus of Variations, 16 (2010) 956-973.

  \bibitem{Jurd}
  \newblock V. Jurdjevic, 
	\newblock \emph{Geometric Control Theory},
  \newblock Cambridge Series in Advances Mathematics 52
    


    \bibitem{Lg}
 	\newblock G. Leitmann, 
 	\newblock \emph{Optimization Techniques with Application to Aerospace Systems},
     \newblock Academic Press Inc., 1962.

 \bibitem{Mar}
 	\newblock L. Markus, 
 	\newblock \emph{Controllability of multi-trajectories on Lie groups},
     \newblock Proceedings of Dynamical
Systems and Turbulence, v. 16 (1980), 956–973.


    

 \bibitem{onis}
 \newblock A. Onishchik and E. Vinberg, \newblock \emph{Lie groups and Lie algebras III-Structure of Lie groups and Lie algebras, vol. 41 of Encyclopaedia of Mathematical Sciences},
 \newblock Springer Verlag, Berlin, 1994.

\bibitem{PBGM}
	\newblock L. Pontryagin, V. Boltyanskii, R. Gramkrelidze and E. Mishchenko, 
	\newblock \emph{ The
Mathematical Theory of Optimal Processes}, 
	\newblock Interscience Publishers John Wiley \&
Sons, Inc., 1962.

\bibitem{SM1}
	\newblock L. A. B. San Martin, 
	\newblock \emph{Algebras de Lie}, 
	\newblock Second Edition, Editora Unicamp, (2010).

\bibitem{Sk}
	\newblock K. Shell, 
	\newblock \emph{Applications of Pontryagin’s Maximum Principle to Economics}, 
	\newblock Mathematical Systems Theory and Economics I and II, Lecture Notes in Operations
Research and Mathematical Economics, v. 11 (1968) 241-292.




\end{thebibliography}
\end{document}